\documentclass[reqno]{amsart}
\usepackage{graphicx}
\usepackage{amstext}
\usepackage{amssymb}
\usepackage{amsmath}
\usepackage{color}
\usepackage[utf8]{inputenc}
\newtheorem{theorem}{Theorem}[section]
\newtheorem{lemma}[theorem]{Lemma}

\newtheorem{proposition}[theorem]{Proposition}
\theoremstyle{definition}
\newtheorem{definition}[theorem]{Definition}

\theoremstyle{remark}
\newtheorem{remark}[theorem]{Remark}
\numberwithin{equation}{section}
\usepackage{hyperref}
\hypersetup{colorlinks,bookmarks=true,linkcolor=blue,citecolor=blue}

\begin{document}

\title{Blow-up dynamics for $L^2$-critical fractional Schr\"odinger equations}\def\rightmark{BLOW-UP FOR CRITICAL FRACTIONAL SCHR\"ODINGER EQUATIONS}

\author{Yang LAN}

\address{Department of Mathematics and Computer Science, University of Basel, Spiegelgasse 1, CH-4051 Basel, Switzerland}

\email{yang.lan@unibas.ch}

\keywords{Fractional Schr\"odinger equations, $L^2$-critical, blow-up dynamics}

\subjclass[2010]{Primary 35Q55; Secondary 35B35, 35B40, 35B44}

\begin{abstract}
In this paper, we consider the $L^2$-critical fractional Schr\"odinger equation $iu_t-|D|^{\beta}u+|u|^{2\beta}u=0$ with initial data $u_0\in H^{\beta/2}(\mathbb{R})$ and $\beta$ close to $2$. We show that if the initial data has negative energy and slightly supercritical mass, then the solution blows up in finite time. We also give a specific description for the blow-up dynamics. This is an extension of the works of F. Merle and P. Rapha\"el for $L^2$-critical Schr\"odinger equations. However, the nonlocal structure of this equation and the lack of some symmetries make the analysis more complicated, hence some new strategies are required.
\end{abstract}

\maketitle

\section{Introduction}
\subsection{Setting of the problem}
In this paper, we consider the following  fractional Schr\"odinger equation:
\begin{equation}
\label{CP}
\begin{cases}
iu_t-|D|^{\beta}u+|u|^{2\beta}u=0,\quad (t,x)\in\mathbb{R}\times\mathbb{R},\\
u|_{t=0}=u_0\in H^{\frac{\beta}{2}}(\mathbb{R}),
\end{cases}
\end{equation}
with $1\leq\beta<2$. Here $|D|^{\beta}$ is defined as following:
$$\widehat{|D|^{\beta}u}(\xi)=|\xi|^{\beta}\hat{u}(\xi).$$ 
This is an extension of the standard Schr\"odinger equation:
\begin{equation}
\begin{cases}
iu_t+\Delta u+|u|^{p-1}u=0,\quad (t,x)\in\mathbb{R}\times\mathbb{R}^d,\\
u|_{t=0}=u_0\in H^{1}(\mathbb{R}^d).
\end{cases}
\end{equation}

 The evolution problems with nonlocal dispersion like \eqref{CP} arise in various physical settings, which include continuum limits of lattice systems \cite{KLS}, models for wave turbulence \cite{CMMT,MMT2}, and gravitational collapse \cite{ES,FL,IP}. We also refer to \cite{CHHO,CHKL,KSM,KLR,W4} and the references therein for the background of the fractional Schr\"odinger model in mathematics, numerics and physics.

Let us review some basic properties of \eqref{CP}. The Cauchy problem \eqref{CP} is an infinite-dimensional Hamiltonian system, which has the following three conservation laws:
\begin{itemize}
\item Mass:
\begin{equation}
\label{11}
M(u(t))=\int |u(t)|^2=M(u_0).
\end{equation}
\item Energy:
\begin{equation}
\label{12}
E(u(t))=\frac{1}{2}\int \big||D|^{\frac{\beta}{2}}u\big|^2-\frac{1}{2(\beta+1)}\int|u(t)|^{2(\beta+1)}=E(u_0).
\end{equation}
\item Momentum:
\begin{equation}
P(u(t))=\Im\int u_x(t)\bar{u}(t)=P(u_0).
\end{equation}
\end{itemize}
The equation \eqref{CP} has the following symmetries:
\begin{itemize}
\item Phase: if $u(t,x)$ is a solution, then for all $\theta\in \mathbb{R}$, $u(t,x)e^{i\theta}$ is also a solution.
\item Translation: if $u(t,x)$ is a solution, then for all $t_0\in\mathbb{R}$, $x_0\in\mathbb{R}$, $u(t-t_0,x-x_0)$ is also a solution.
\item Scaling: if $u(t,x)$ is a solution, then for all $\lambda>0$,
\begin{equation}
\label{13}
u_{\lambda}(t,x)=\frac{1}{\lambda^{d/2}}u\bigg(\frac{t}{\lambda^{\beta}},\frac{x}{\lambda}\bigg)
\end{equation}
is also a solution.
\end{itemize}

\begin{remark}
Unlike the classic nonlinear Schr\"odinger equations, the equation \eqref{CP} does not have Galilean transform and psedo-conformal transform. This fact leads to some additional technical difficulty for obtaining blow-up results for \eqref{CP}.
\end{remark}

The Cauchy problem \eqref{CP} is $L^2$-critical since the $L^2$ norm is invariant under the scaling rule \eqref{13}: $$\|u_{\lambda}\|_{L^2}=\|u\|_{L^2},\;\;\textrm{for all } \lambda>0.$$
From \cite{GH,HS}, we know that the Cauchy problem \eqref{CP} is locally well-posed in the energy space $H^{\frac{\beta}{2}}$. More precisely, for all $u_0\in H^{\frac{\beta}{2}}$, there exists a unique solution $u(t)\in C([0,T),H^{\frac{\beta}{2}})$ to \eqref{CP}. Moreover, if the maximal lifetime $T$ is finite, then we have
\begin{equation}
\label{14}
\lim_{t\rightarrow T^-}\big\||D|^{\frac{\beta}{2}}u(t)\big\|_{L^2}=+\infty.
\end{equation}
However, unlike the $L^2$-critical Schr\"odinger equation which is locally well-posed in $L^2$ (see \cite{TW}), it is not known whether the Cauchy problem \eqref{CP} is well-posed in the critical space $L^2$ when $1\leq\beta<2$. Moreover in \cite{CP}, Choffrut and Pocovnicu proved that in the half-wave case ($\beta=1$), the Cauchy problem \eqref{CP} is ill-posed in $H^s$ if $s<\frac{1}{2}$.

There exists a special class of solutions to \eqref{CP} called the {\it solitary waves} generated by
\begin{equation}
\label{15}
u(t,x)=e^{it}Q_{\beta}(x),
\end{equation}
where $Q_{\beta}\in H^{\frac{\beta}{2}}$ is  a solution to the following equation:
\begin{equation}
\label{16}
|D|^{\beta}Q_{\beta}+Q_{\beta}-|Q_{\beta}|^{2\beta}Q_{\beta}=0.
\end{equation}
From \cite{FL, FLS}, we know that there exists a unique radial nonnegative $H^{\frac{\beta}{2}}$ solution to \eqref{16}, with
\begin{equation}
\label{17}
Q_{\beta}(y)\sim \frac{1}{|y|^{1+\beta}},\;\;\text{as }|y|\rightarrow\infty.
\end{equation}
We call $Q_{\beta}$ the \textit{ground state}. It is also the unique optimizer (up to symmetry) for the following Gagliardo-Nirenberg inequality
\begin{equation}\label{121}
\|f\|_{L^{2\beta+2}}^{2\beta+2}\leq C^*\big\||D|^{\frac{\beta}{2}}f\big\|^2_{L^2}\|f\|^{2\beta}_{L^2},\;\;\text{for all }f\in H^{\frac{\beta}{2}}.
\end{equation}
Hence, a standard argument shows that if $\|u_0\|_{L^2}<\|Q_{\beta}\|_{L^2}$ then the corresponding solution to \eqref{CP} satisfies
$$\sup_{0\leq t<T}\big\||D|^{\frac{\beta}{2}}u(t)\big\|_{L^2}\leq C(u_0)<+\infty,$$
which implies that the solution is global in time and uniformly bounded in $H^{\frac{\beta}{2}}$. However, unlike the $L^2$-critical Schr\"odinger equation ($\beta=2$), where Dodson \cite{DOD4} proved that the condition $\|u_0\|_{L^2}<\|Q_{\beta}\|_{L^2}$ actually implies scattering at both time direction, in the fractional case, there exists non-scattering solutions (traveling waves) with arbitrarily small mass, due to \cite{KLR,NP} .

\subsection{On the $L^2$-critical NLS problem}Let us give an overview of the results for the $L^2$-critical Schr\"odinger equations:
\begin{equation}
\label{NLS}
\begin{cases}
iu_t+\Delta u+|u|^{\frac{4}{d}}u=0,\quad (t,x)\in\mathbb{R}\times\mathbb{R}^d,\\
u|_{t=0}=u_0\in H^{1}(\mathbb{R}^d).
\end{cases}
\end{equation}

From Weinstein \cite{W1}, we know that for all initial data $u_0\in H^1$ with $\|u_0\|_{L^2}<\|Q\|_{L^2}$, the corresponding solution to \eqref{NLS} is global in time and uniformly bounded in $H^1$. Here $Q$ is called the ground state, which is the unique nonnegative radial $H^1$ solution to the following elliptic equations:
\begin{equation}
\label{116}
\Delta Q-Q+Q^{1+\frac{4}{d}}=0,\quad Q(y)>0,\quad Q\in H^1(\mathbb{R}^d).
\end{equation}
Hence, blow-up for \eqref{NLS} can only occur in the case of $\|u_0\|_{L^2}\geq\|Q\|_{L^2}$.

There are several examples of blow-up solutions for \eqref{NLS}: 

\noindent {\bf(1) Using virial argument}: Let the initial data satisfy $u_0\in H^1$, $xu_0\in L^2$ and 
$$\tilde{E}(u_0)=\frac{1}{2}\int|\nabla u_0|^2-\frac{d}{2d+4}\int|u_0|^{\frac{2d+4}{d}}<0,$$
then the corresponding solution to \eqref{NLS} blows up in finite time.

\noindent {\bf(2) Minimal mass blow-up solutions}: The pseudo-conformal symmetry of \eqref{NLS} yields an explicit minimal blow-up solution:
\begin{equation}
\label{18}
S(t,x)=\frac{1}{|t|^{\frac{d}{2}}}Q\bigg(\frac{x}{t}\bigg)e^{\frac{i}{t}+\frac{i|x|^2}{4t}},
\end{equation}
which blows up at $T=0$ with $\|\nabla S(t)\|_{L^2}\sim 1/|t|$ as $t\rightarrow0$. In \cite{M7}, Merle proved that the solution given by \eqref{18} is the unique finite time blow-up solutions in $H^1$ with critical mass $\|u\|_{L^2}=\|Q\|_{L^2}$, up to symmetries of the equation.

\noindent {\bfseries(3) Log-log blow-up solution}: Numerical simulations \cite{LPSS}, and formal arguments \cite{SS}, suggest
the existence of solutions blowing up like 
$$\|\nabla u(t )\|_{L^2} \sim \sqrt{\frac{\log|\log(T-t )|}{T-t}}$$
 in dimension $d=2$. Perelman \cite{Per} proved the existence of a blow-up solution of this type and its stability in some space $E\subset H^1$. More detailed results have been obtained in a series of papers of Merle and Rapha\"el \cite{MR3,MR2,MR1,MR5,MR4}. They proved the existence of an $H^1$ nonempty open set of initial data leading to finite time blow-up solutions in log-log regime. These solutions behave like a blow-up bubble near the blow-up time:
\begin{equation}
\label{110}
u(t,x)-\frac{1}{\lambda^{\frac{d}{2}}(t)}Q\bigg(\frac{x-x(t)}{\lambda(t)}\bigg)e^{i\gamma(t)}\rightarrow u^*\; {\rm in}\;L^2,\quad {\rm as}\;t\rightarrow T,
\end{equation}
for some parameters $\gamma(t)\rightarrow+\infty$ and $x(t)\rightarrow x(T)$ and the blow-up speed is given by
\begin{equation}
\label{19}
\|\nabla u(t)\|_{L^2}=\frac{\|\nabla Q\|_{L^2}}{\lambda(t)},\quad\lambda(t)=\sqrt{\frac{2\pi(T-t)}{\log|\log(T-t)|}}\big(1+o(1)\big),\;{\rm as}\;t\rightarrow T.
\end{equation}

\noindent {\bfseries(4) Non-generic blow-up solutions}: In \cite{BW}, Bourgain and Wang proved that there exist in dimension $d=1,2$, a family of blow-up solutions with blow-up rate: $\|\nabla u(t)\|_{L^2}\sim 1/(T-t)$, other than the minimal mass blow-up solutions given by \eqref{18}. In \cite{MRS3}, Merle, Rapha\"el and Szeftel proved that such solution is unstable in $H^1$. In ]\cite{KS}, Krieger and Schlag proved the existence of a codimension one manifolds of initial data which leads to blow-up solutions with blow-up rate: $\|\nabla u(t)\|_{L^2}\sim 1/(T-t)$. In \cite{MarR}, Martel and Rapha\"el construct blow-up solutions with blow-up rate: $\|\nabla u(t)\|_{L^2}\sim |\log(T-t)|/(T-t)$.

\subsection{Blow-up results for $L^2$ critical half wave equations}
In the case $\beta=1$, the Cauchy problem \eqref{CP} is called half wave equation. The existence of blow-up solutions in this case is a long standing open problem. 

Unlike the $L^2$-critical NLS \eqref{NLS}, the virial argument does not work in this case. Indeed, we still have:
$$\frac{d}{dt}\bigg(\Im\int x\cdot \nabla u(t)\bar{u}(t)\bigg)=2E(u_0).$$
But the term ${\rm Im}\int x\cdot \nabla u(t)\bar{u}(t)$ cannot be written as the derivative of some nonnegative term. A suitable generalization of the variance term for the half wave equations should be
$$V(t):=\int \big|(-\Delta)^{\frac{1}{4}}(xu(t))\big|^2.$$
However, the appearance of nonlinear terms makes the analysis much more complicated: 
$$V'(t)=2\Im\int x\cdot \nabla u(t)\bar{u}(t)+\text{nonlinear terms}.$$
On the other hand, there is no pseudo-conformal symmetry for half wave equations. We cannot construct minimal mass blow-up solutions directly. However, by a dynamical argument, Krieger, Lenzmann and Rapha\"{e}l \cite{KLR} constructed a minimal mass blow-up solution to the $L^2$-critical half-wave equation with:
$$\big\||D|^{\frac{1}{2}}u(t)\big\|_{L^2}\sim \frac{C}{T-t},\quad as\quad t\rightarrow T,$$
for any given momentum and energy (positive). But unlike the $L^2$ critical Schr\"odinger case, the uniqueness (up to symmetry) for this minimal mass blow-up solution is still not known.
\subsection{Main results}
In this paper, we will construct blow-up solutions for \eqref{CP} for $1\leq \beta<2$. Similar to the half-wave case, the virial argument does not work for general $L^2$-critical fractional Schr\"odinger case either. Besides, we still lack the pseudo-conformal symmetry here to construct minimal mass blow-up solution directly. In \cite{KLR}, the authors claimed that minimal mass blow-up solutions for \eqref{CP} in the case $1<\beta<2$ could also be constructed by using the same argument. In \cite{BHL}, the authors proved that under suitable assumptions, there exist solutions to \eqref{CP}, which blow up either in finite time or infinite time by using a localized virial argument. In this paper, we will focus on the slightly supercritical mass case:
\begin{equation}
\label{111}
u_0\in \mathcal{B}_{\alpha_0}:=\Big\{u_0\in H^{\frac{\beta}{2}}\big|\|Q_{\beta}\|_{L^2}<\|u_0\|_{L^2}<\|Q_{\beta}\|_{L^2}+\alpha_0\Big\}.
\end{equation}

Now, we can state the main result of this paper:
\begin{theorem}[Blow-up dynamics in $\mathcal{B}_{\alpha_0}$]\label{MT1}
There exists $1\leq\beta_0<2$ such that if $\beta_0<\beta<2$, then there exists a universal constant $\alpha_0=\alpha_0(\beta)>0$, such that if $u_0\in \mathcal{B}_{\alpha_0}$, $E(u_0)<0$, then the corresponding solution $u(t)$ to the Cauchy problem \eqref{CP} blows up in finite time $T<\infty$, with the following upper bound on the blow-up rate:
\begin{equation}
\label{114}
\big\||D|^{\frac{\beta}{2}}u(t)\big\|_{L^2}\leq C^*\sqrt{\frac{|\log(T-t)|^{\frac{1}{8}}}{T-t}},\quad \text{as }t\rightarrow T,
\end{equation}
for some universal constant $C^*>0$ independent of $\beta$.
\end{theorem}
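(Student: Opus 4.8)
The plan is to adapt the modulation-theoretic blow-up machinery of Merle and Rapha\"el to the nonlocal problem \eqref{CP}, using the spectral property \eqref{SP} as the essential coercivity input. The first step is a geometric decomposition near the ground state. Combining the conservation of mass \eqref{11}, the negativity of the energy \eqref{12}, and the sharp Gagliardo--Nirenberg inequality \eqref{121} (for which $Q_\beta$ is the unique optimizer up to symmetry), one shows that for $\alpha_0$ small enough every $u_0\in\mathcal{B}_{\alpha_0}$ with $E(u_0)<0$ remains, up to the scaling \eqref{13}, phase and translation symmetries, uniformly $H^{\beta/2}$-close to $Q_\beta$ throughout its lifespan. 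An implicit-function-theorem argument then produces a unique decomposition
$$u(t,x)=\frac{1}{\lambda(t)^{1/2}}\big(Q_\beta+\varepsilon\big)\Big(s,\tfrac{x-x(t)}{\lambda(t)}\Big)e^{i\gamma(t)},\qquad \frac{ds}{dt}=\frac{1}{\lambda(t)^{\beta}},$$
in which the $C^1$ modulation parameters $(\lambda,x,\gamma)$, together with an auxiliary scaling-speed parameter $b$ measuring $-\lambda_s/\lambda$, are fixed by imposing the four orthogonality conditions $(\varepsilon_1,Q_\beta)=(\varepsilon_1,G_1)=(\varepsilon_2,\Lambda Q_\beta)=(\varepsilon_2,\Lambda^2 Q_\beta)=0$ of Definition \ref{SP1}.

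Next I would derive the modulation system. Substituting the ansatz into \eqref{CP} yields an evolution equation for $\varepsilon$ whose source terms depend on $\tfrac{\lambda_s}{\lambda}+b$, $\gamma_s-1$, $\tfrac{x_s}{\lambda}$ and $b_s$ (after absorbing an approximate self-similar phase correction into the profile). Projecting this equation onto $Q_\beta$, $G_1$, $\Lambda Q_\beta$ and $\Lambda^2 Q_\beta$ and using the orthogonality conditions converts it into a closed ODE system for the parameters, with leading-order laws $\lambda_s/\lambda\approx -b$ and $b_s$ controlled by $\|\varepsilon\|$ and the (negative) energy. The negativity of $E(u_0)$ is what keeps $b$ bounded below away from zero, so that the persistence of $b>0$ will be the mechanism driving the concentration $\lambda\to 0$.

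The technical heart is a monotonicity (Lyapunov) estimate controlling $\varepsilon$, and here the spectral property \eqref{SP} is exactly what is needed: under the four orthogonality conditions it makes the linearized energy $H^\beta(\varepsilon,\varepsilon)$ comparable to $\int\big||D|^{\frac{\beta}{2}}\varepsilon\big|^2+\int|\varepsilon|^2e^{-|y|}$, i.e.\ it gives coercivity of the second variation of the energy--momentum functional around $Q_\beta$. Combining this coercivity with the conserved quantities and with an almost-monotone \emph{localized} virial-type quantity, I would close a bootstrap argument: assuming smallness of $\|\varepsilon\|_{H^{\beta/2}}$ together with a pointwise control on $b$, one improves both bounds, simultaneously showing that $b$ stays positive and $\lambda$ is monotonically decreasing. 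Integrating the law $\lambda_s/\lambda\approx -b$ back in the original time then forces $\lambda(t)\to 0$ in finite time $T<\infty$; since $\||D|^{\frac{\beta}{2}}u(t)\|_{L^2}\sim\lambda(t)^{-\beta/2}$, this is the finite-time blow-up, and carefully tracking the logarithmic loss in the bootstrap bound on $b$ produces the upper estimate \eqref{114}.

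I expect the construction and control of the localized virial/monotonicity functional to be the main obstacle. Unlike the local case $\beta=2$, the nonlocality of $|D|^\beta$ means spatial truncations do not commute with the dispersion, so the commutators $[|D|^\beta,\phi]$ generate genuinely nonlocal error terms; combined with the merely polynomial decay $Q_\beta(y)\sim|y|^{-(1+\beta)}$ from \eqref{17}, these tails are delicate to absorb into the coercive lower bound. Moreover, the absence of Galilean and pseudo-conformal symmetries (noted in the Remark after \eqref{13}) removes the usual normalizations and forces one to carry and estimate the translation parameter $x(t)$ explicitly through its own modulation equation. The extra factor $|\log(T-t)|^{1/8}$ in \eqref{114} is precisely the price paid for these nonlocal error terms, which is why only an upper bound on the blow-up rate, rather than the sharp log-log law, is obtained.
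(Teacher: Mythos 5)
Your high-level strategy (variational proximity to $Q_\beta$, modulation, coercivity from \eqref{SP}, a virial-type monotonicity for $b$, integration of $\lambda_s/\lambda\approx -b$) is the right skeleton, but two ideas on which the paper's proof actually hinges are missing, and without them the argument does not close. First, you decompose around the static profile $Q_\beta$ and impose only the four orthogonality conditions of Definition \ref{SP1}. The paper instead constructs a high-order approximate self-similar profile $W_{b,v}$ (Proposition \ref{P1}), expanded to order $b^4$ and $v^2$, so that the profile error satisfies $|\Psi_{b,v}|\lesssim |b|^5+v^2(|v|+|b|)$; it is precisely this $|b|^5$ error, squared by Cauchy--Schwarz, that produces the $-Cb^{10}$ term in the local virial estimate \eqref{422}, and integrating $b_s\geq -Cb^{10}$ is what yields $b(s)\gtrsim s^{-1/9}$, $\lambda(s)\geq e^{-Cs^{8/9}}$, and hence the exponent $\tfrac18$ in \eqref{114}. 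With a decomposition around $Q_\beta$ alone the error in the $b$ equation is only quadratic in $b$, and no bound of the form \eqref{114} can be extracted. Second, the paper introduces a fifth modulation parameter, the velocity $v$, with its own orthogonality condition \eqref{36} and its own profile corrections $\pmb{R}_{0,1},\pmb{R}_{0,2},\pmb{R}_{1,1},\pmb{R}_{2,1}$; this is the substitute for the missing Galilean invariance. Merely ``carrying the translation parameter $x(t)$'' is not enough: the momentum drift enters the virial identity at order $v^2$ with a sign that must be identified (the constant $c_0=(L_-^\beta\nabla Q_\beta,\nabla Q_\beta)>0$ in \eqref{240} and \eqref{441}), and without the $v$-dependence of the profile these terms cannot be absorbed into the coercive part.

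Relatedly, your proposed monotonicity functional is a spatially truncated virial, and you attribute the logarithmic loss in \eqref{114} to the commutators $[|D|^\beta,\phi]$ generated by the cutoff together with the slow decay of $Q_\beta$. This is not how the paper proceeds and is not a workable route here: the paper uses the fact that the exact, global virial identity \eqref{43} still holds for \eqref{CP}, injects the geometrical decomposition into it, and obtains the $b_s$ equation by differentiating the orthogonality condition \eqref{34}; the localization is automatic because the test directions $\Lambda\Sigma$, $\Lambda\Theta$ decay, so no spatial cutoff and no commutator $[|D|^\beta,\phi]$ ever appears. Consequently the factor $|\log(T-t)|^{\frac18}$ has nothing to do with nonlocal truncation errors; as the paper points out, it comes solely from the polynomial (rather than exponential) smallness of the profile error $\Psi_{b,v}$, and would be upgraded to the log-log law if that error could be improved to $e^{-C/|b|}$.
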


\begin{remark}
The set of initial data satisfying the conditions mentioned in Theorem \ref{MT1} is not empty. Since the ground state has zero energy $E(Q_{\beta})=0$, one may consider $u_{0,\delta}=(1+\delta)Q_{\beta}$ with $0<\delta\ll1$. The proof of Theorem \ref{MT1} is similar to that in \cite{MR3,MR2,MR1,MR4}, but due to a lack of Galilean transform and the nonlocal structure, we need some additional arguments.
\end{remark}

\begin{remark}\label{R1}
The reason why we assume that $\beta$ is close to $2$ is that we need some spectral property related to the linearization of the ground state $Q_\beta$. The proof of such property relies on a perturbation argument and the continuity of $Q_\beta$ with respect to $\beta$, which justifies the assumption on $\beta$. We mention here the analysis%
\footnote{See Section \ref{S2}--\ref{S4}.}
 in this paper can be extended to all $1\leq\beta<2$ if such spectral property holds true.
\end{remark}

\begin{remark}
Theorem \ref{MT1} gives a first construction of finite time blow-up solutions with supercritical mass for $L^2$ critical fractional NLS \eqref{CP}. This solution is also stable in the sense that the set of initial data satisfying the conditions mentioned in Theorem \ref{MT1} is an open subset in $H^{\frac{\beta}{2}}$. This is completely different from the minimal mass blow-up solution constructed in \cite{KLR} since the minimal blow-up solution is obviously unstable in $H^{\frac{\beta}{2}}$. On the other hand,  Boulenger, Himmelsbach and Lenzmann proved in \cite{BHL} that in higher dimension case, radial solutions with negative energy to the $L^2$-critical fractional NLS will blow up in finite time or infinite time. Theorem \ref{MT1} gives an example of finite time blow-up solutions in this case. 
\end{remark}

\begin{remark}
The scaling structure of \eqref{CP} gives the following {\it a priori} lower bound for all finite time blow-up solutions:
$$\big\||D|^{\frac{\beta}{2}}u(t)\big\|_{L^2}\gtrsim (T-t)^{-\frac{\beta}{4}}.$$
But nontrivial lower bound of blow-up rate in this case is still not known.
\end{remark}

\begin{remark}

The upper bound in Theorem \ref{MT1} is not likely to be sharp. Indeed this upper bound is related to the approximation of the nonlinear blow-up profile $W_{b,v}$ constructed in Section \ref{S2}. A sharp estimate of the error term is needed to obtain sharp upper bound of the blow-up rate, which seems difficult in the nonlocal case. On the other hand, one may see from the construction of $W_{b,v}$ that the blow-up profile is approximated by its Taylor's expansion with respect to $b$ and $v$. In this paper, we stop at the order of $b^4$. This is because the equation at the next order is significantly more complicated, which seems hard to verify. Besides, this type of improvement is also not likely to provide a sharp upper bound on the blow-up rate. Since, numerical simulation \cite{KSM} suggests that the exact blow-up rate for the blow-up solutions introduced in Theorem \ref{MT1} should be
$$\big\||D|^{\frac{\beta}{2}}u(t)\big\|_{L^2} \sim \sqrt{\frac{\log|\log(T-t)|}{T-t}},$$
the same as the local case when $\beta=2$. This requires a sharp approximation of the blow-up profile where the error term is controlled by $e^{-C/|b|}$, which at this moment is not known.

\end{remark}

\begin{remark}
For initial data with slightly supercritical mass and zero energy, by slightly modifying the proof of Theorem \ref{MT1}, we can prove that either the solution $u(t)$ itself or%
\footnote{We mention here that this $v(t)$ is still a solution of \eqref{CP}.}
$v(t):=\bar{u}(-t)$ blows up in finite time with the same estimate on the upper bound of the blow-up rate. In the local case when $\beta=2$, Merle and Rapha\"el \cite[Theorem 6]{MR2} shows that if we additionally assume that $xu_0\in L^2$, then the zero energy solution $u(t)$ will blow up in both time direction. However, this type of result is still not known for the nonlocal case when $1<\beta<2$, since we lack here a uniqueness result for the minimal mass blow-up solution constructed in \cite{KLR} as well as a clear characterization of blow-up solutions in the virial space.
\end{remark}

\begin{remark}
As a similar model, the $L^2$-critical generalized Benjamin-Ono equation:
$$\partial_t-|D|^\beta\partial_xu+(u|u|^{2\beta})_x=0,$$
with $1\leq \beta<2$, has also received a lot of attraction recently. The local case when $\beta=2$ corresponds to the $L^2$-critical gKdV equation where the theory of formation of singularity has been well studied in a series work of Martel and Merle \cite{MM1,MM2,MM4,MM5,MM3} and Martel, Merle and Rapha\"el \cite{MMR1,MMR2,MMR3}. While for the nonlocal case, when $\beta=1$, this equation becomes the modified Benjamin-Ono equation, where the existence of minimal mass blow-up solution has been proved by Martel and Pilod \cite{MP}, similar as what Krieger, Lenzmann and Rapha\"el \cite{KLR} did for the $L^2$-critical half-wave equation. We expect similar result as Theorem \ref{MT1} can be proved for the $L^2$-critical generalized Benjamin-Ono equation, since the blow-up results in the local case (gKdV) is known and is similar to the local case of \eqref{CP}. However, this type of results is still mostly open.
\end{remark}

\subsection{Outline of the proof}
The basic idea of this paper is similar to that of \cite{MR3,MR2,MR1,MR4}. We first notice that the solution is close to the ground state up to scaling, translation and phase, due to the assumption of slightly supercritical mass and negative energy. Then we can linearize the solution at the ground state so that the Cauchy problem \eqref{CP} can be viewed as an ODE system of the well-chosen parameters and a nonlinear PDE of an error term. We hope to find a suitable control of the error term such that the error does not perturb the ODE system. Therefore the behavior of solutions to the ODE system can fully describe the behavior of the original solution. More precisely, we have the following steps:

\subsubsection{The nonlinear blow-up profile}
We are seeking for solution of the following form
\begin{align*}
&u(t,x)=\frac{1}{\lambda^{\frac{1}{2}}(t)}W_{b(t),v(t)}\bigg(\frac{x-x(t)}{\lambda(t)}\bigg)e^{i\gamma(t)},\\
&\frac{ds}{dt}=\frac{1}{\lambda^{\beta}},\;\frac{\lambda_s}{\lambda}+b=0,\;\frac{x_s}{\lambda}=v,\;v_s+bv=0,\;\gamma_s+1=0,\;b_s=0,
\end{align*}
which after a direct computation leads to the following equation for $W_{b,v}$:
$$-\Psi_{b,v}:=ib\Lambda W_{b,v}-ivW'_{b,v}-ibv\partial_vW_{b,v}-|D|^{\frac{\beta}{2}}W_{b,v}-W_{b,v}+|W_{b,v}|^{2\beta}W_{b,v}=0.$$
Using the properties of the linearized operator $\pmb{L}^{\beta}$ at $Q_{\beta}$, we may find a suitable approximate solution $W_{b,v}$, such that
$$|\Psi_{b,v}|\lesssim |b|^{5}+v^{2}.$$

\subsubsection{Geometrical decomposition and modulation theory}
Under the assumption of slightly supercritical mass and negative energy, we may use a standard implicit function argument to write the solution in the following form
\begin{equation}
\label{118}
u(t,x)=\frac{1}{\lambda^{\frac{1}{2}}(t)}\big[W_{b(t),v(t)}+\varepsilon(t)\big]\bigg(\frac{x-x(t)}{\lambda(t)}\bigg)e^{i\gamma(t)},
\end{equation}
where the complex valued error term $\varepsilon=\varepsilon_1+i\varepsilon_2$ satisfies some suitable orthogonality conditions. Here we introduce the velocity parameter $v$ to deal with the lack of Galilean transform. We will see the velocity parameter asymptotically vanishes sufficiently fast so that it does not perturb the system. We also mentioned here that the choice of the orthogonality conditions implies the following relation:
\begin{equation}\label{117}
b\sim (\varepsilon_2,\Lambda Q_{\beta}).
\end{equation}

Differentiating those orthogonality conditions, we have a first control of the parameters:
\begin{align}
&\bigg|\frac{\lambda_s}{\lambda}+b\bigg|+|b_s|+|v_s+bv|+\bigg|\gamma_s+1+\frac{1}{\|\Lambda Q_{\beta}\|_{L^2}^2}(\varepsilon_1,L_+^{\beta}\Lambda^2Q_{\beta})\bigg|+\bigg|\frac{x_s}{\lambda}-v\bigg|\nonumber\\
&\lesssim \delta(\alpha_0)\bigg(\int\big||D|^{\frac{\beta}{2}}\varepsilon\big|^2+\int|\varepsilon|^2e^{-|y|}\bigg)^{\frac{1}{2}}+|b|^5+|v|^2+\lambda^{\beta}|E(u_0)|.\label{120}
\end{align}
Our next task is to find a suitable control of the error term $\varepsilon$.

\subsubsection{The local virial estimate and the spectral property} An important feature for \eqref{CP} is that it still has the following virial identity:
$$\frac{d}{dt}\bigg(\Im\int x\cdot \nabla u(t)\bar{u}(t)\bigg)=2E(u_0).$$
Although we cannot directly use the virel identity to construct blow-up solutions, we may still use it to get a suitable control of the error term $\varepsilon$. More precisely, if we inject the geometrical decomposition \eqref{118} into the virel identity using \eqref{117}, we have the following local virel estimate:
\begin{equation}
\label{119}
b_s\gtrsim \widetilde{H}^{\beta}(\varepsilon,\varepsilon)-\lambda^{\beta}E(u_0)+v^2-|b|^{10}-\delta(\alpha_0)\bigg(\int\big||D|^{\frac{\beta}{2}}\varepsilon\big|^2+\int|\varepsilon|^2e^{-|y|}\bigg),
\end{equation}
where%
\footnote{See \eqref{481} for the detailed definition of $\widetilde{H}^\beta$.} 
$$\widetilde{H}^{\beta}(\varepsilon,\varepsilon)=\int\big||D|^{\frac{\beta}{2}}\varepsilon\big|^2+\int V_1(y)\varepsilon_1^2+\int V_2(y)\varepsilon_2^2+(\varepsilon_1,V_3)(\varepsilon_2,V_4)$$
is some quadratic form for $\varepsilon=\varepsilon_1+i\varepsilon_2\in \widetilde{H}^{\frac{\beta}{2}}$. Thanks to the negative energy assumption and the spectral property (coercivity of $\widetilde{H}^{\beta}(\varepsilon,\varepsilon)$), we have for all $s\in[0,+\infty)$,
$$b_s\geq -C|b|^{10}.$$

\subsubsection{Proof of Theorem \ref{MT1}}
From \eqref{120}, \eqref{119} and the spectral property, we can show that 
$$\int\big||D|^{\frac{\beta}{2}}\varepsilon\big|^2+\int|\varepsilon|^2e^{-|y|}\ll |b|,$$
in a time average sense. Using \eqref{120} and \eqref{119} again, we have the following two key points:
\begin{enumerate}
\item There exists a $s_0>0$, such that for all $s\geq s_0$, we have $b(s)>0$.

\item For all $s\geq s_0$, we have $\lambda_s/\lambda\sim -b$ in a time average sense.
\end{enumerate}

Note that the the local virial estimate implies that $b_s\geq -Cb^{10}$ for all $s\geq s_0$. After integration, we have for all $s$ large enough,
$$b(s)\geq \frac{C}{s^{\frac{1}{9}}},\quad \lambda(s)\geq e^{-Cs^{\frac{8}{9}}},$$
and for all $s_2>s_1$ large enough, 
$$\lambda(s_2)\geq \frac{1}{2}\lambda(s_1).$$
These estimates remove the possibility that the $H^{\frac{\beta}{2}}$ norm of the solution oscillates in time, which forces the solution to blow up in finite time $T<+\infty$. Finally, after a change of coordinate, the above estimates also imply that
$$\lambda(t)\geq \bigg(\frac{T-t}{|\log(T-t)|^{\frac{1}{8}}}\bigg)^{\frac{1}{\beta}}.$$
Now the proof of Theorem \ref{MT1} is finished.

\subsection{Notations}\label{S1}
We use $|D|^s$ to denote the fractional order derivatives for $s\geq0$. That is
$$
\widehat{|D|^su}(\xi)=|\xi|^{s}\hat{u}(\xi).
$$
We use
$$(f,g)=\int \bar{f}g$$
as the inner product on $L^2(\mathbb{R};\mathbb{C})$.

We denote by $Q_{\beta}$ the ground state of \eqref{CP}, which is the unique radial nonnegative $H^{\frac{\beta}{2}}$ solution of 
$$|D|^{\beta}Q_{\beta}+Q_{\beta}-|Q_{\beta}|^{2\beta}Q_{\beta}=0.$$
In the local case when $\beta=2$, we also denote by $Q=Q_{2}$, which has an explicit expression in dimension one:
$$Q(x)=\bigg(\frac{3}{\cosh^2(2x)}\bigg)^{\frac{1}{4}}.$$

For a regular enough function $f$, we define the $L^2$-critical scaling operator as following:
$$\Lambda f(y):=-\frac{d}{d\lambda}\bigg|_{\lambda=1}\frac{1}{\lambda^{\frac{1}{2}}}f\bigg(\frac{y}{\lambda}\bigg)=\frac{1}{2}f+yf'.$$
We also write $\Lambda^k f$ for $k\in\mathbb{N}$ for the iterates of $\Lambda$.

In some parts of the paper, we will identify any complex valued function $f:\mathbb{R}\rightarrow\mathbb{C}$ with $\pmb{f}:\mathbb{R}\rightarrow \mathbb{R}^2$ in the following sense:
$$\pmb{f}=
\begin{bmatrix}
\Re f\\
\Im f
\end{bmatrix}.
$$
We still use the notion $\Re\pmb{f}$ and $\Im\pmb{f}$ for the real and imaginary part of the function $\pmb{f}$.

We also identify the multiplication by $i$ in $\mathbb{C}$ with the multiplication by the following matrix
$$\pmb{J}=
\begin{bmatrix}
0&-1\\
1&0
\end{bmatrix}.
$$

We denote by $\pmb{f}\cdot\pmb{g}$ the standard multiplication in $\mathbb{C}$,
$$\pmb{f}\cdot\pmb{g}=
\begin{bmatrix}
\Re f\Re g-\Im f\Im g\\
\Re f\Im g+\Im f\Re g
\end{bmatrix},
$$
and $\bar{\pmb{f}}$ the complex conjugate of $\pmb{f}$,
$$\bar{\pmb{f}}=\begin{bmatrix}
\Re f\\
-\Im f
\end{bmatrix}.
$$

We also denote the linearized operator at the ground state $Q_{\beta}$ by
$$\pmb{L}^{\beta}=
\begin{bmatrix}
L_+^{\beta}&0\\
0&L_-^{\beta}
\end{bmatrix},
$$
with the following two scalar operators
$$L_+^{\beta}=|D|^{\frac{\beta}{2}}+1-(2\beta+1)Q_{\beta}^{2\beta},\quad L_-^{\beta}=|D|^{\frac{\beta}{2}}+1-Q_{\beta}^{2\beta},$$
acting on $L^2(\mathbb{R};\mathbb{R})$.

Finally, we denote by $\delta(\alpha)>0$ a small universal constant such that 
$$\lim_{\alpha\rightarrow0^+}\delta(\alpha)=0.$$

\section{Construction of the nonlinear blow-up profile}\label{S2}
This section is devoted to construct an approximate blow-up profile $W_{b,v}$ for the following equation:
$$ib\Lambda W_{b,v}-ivW'_{b,v}-ibv\partial_vW_{b,v}-|D|^{\frac{\beta}{2}}W_{b,v}-W_{b,v}+|W_{b,v}|^{2\beta}W_{b,v}=0.$$

For convenience, we identify a complex valued function $f:\mathbb{R}\rightarrow\mathbb{C}$ with the vector valued function $\pmb{f}:\mathbb{R}\rightarrow \mathbb{R}^2$, as we mentioned in Section \ref{S1}.

Now, we can state our result for the approximate blow-up profile.
\begin{proposition}[Approximate blow-up profile]\label{P1}
For all $2>\beta>1$, there exist constants $b_0>0$, $v_0>0$, such that if $|b|<b_0$, $|v|<v_0$, then there exists a smooth function of the following form
\begin{equation}
\label{21}
\pmb{W}_{b,v}=\pmb{Q}_{\beta}+\sum_{j=1}^4b^{j}\pmb{R}_{j,0}+v\pmb{R}_{0,1}+v^2\pmb{R}_{0,2}+bv\pmb{R}_{1,1}+b^2v\pmb{R}_{2,1},
\end{equation}
satisfying the following:
\begin{enumerate}
\item The equation of $\pmb{W}_{b,v}$: the profile $\pmb{W}_{b,v}$ satisfies
\begin{equation}
\label{22}
-\pmb{\Psi}_{b,v}=\pmb{J}b\Lambda \pmb{W}_{b,v}-\pmb{J}v\pmb{W}'_{b,v}-\pmb{J}bv\partial_v\pmb{W}_{b,v}-|D|^{\frac{\beta}{2}}\pmb{W}_{b,v}-\pmb{W}_{b,v}+|\pmb{W}_{b,v}|^{2\beta}\pmb{W}_{b,v}.
\end{equation}
Here the error term $\pmb{\Psi}_{b,v}$ satisfies the following estimates:
\begin{align}
&\|\pmb{\Psi}_{b,v}\|_{H^m}\lesssim_{m}|b|^5+v^2(|v|+|b|),\label{23}\\
&\|\langle x\rangle^{(1+\beta)}\Lambda^n\pmb{\Psi}_{b,v}(x)\|_{L^{\infty}}\lesssim_{n}|b|^5+v^2(|v|+|b|).\label{219}
\end{align}
\item Regularity and decay estimate:  the functions $\{\pmb{R}_{k,\ell}\}_{0\leq k\leq 4,0\leq\ell\leq2}$ satisfy the following estimates
\begin{equation}
\label{24}
\|\Lambda^n \pmb{R}_{k,\ell}\|_{H^m}+\|\langle x\rangle^{(1+\beta)}\Lambda^n\pmb{R}_{k,\ell}(x)\|_{L^{\infty}}\lesssim_{m,n}1,
\end{equation}
for all $m\in\mathbb{N}$ and $n=0,1,2$.
\item Degeneracy of the energy:
\begin{equation}\label{240}
\beta E(\pmb{W}_{b,v})=c_0v^2+O\big(|b|^5+v^2(|v|+|b|)\big),
\end{equation}
where $c_0>0$ is a positive constant.
\item The scaling invariance: there holds
\begin{align}
\label{246}
&|D|^\beta (\Lambda \pmb{W}_{b,v})+\Lambda \pmb{W}_{b,v}-|\pmb{W}_{b,v}|^{2\beta}(\Lambda \pmb{W}_{b,v})-2\beta \pmb{W}_{b,v}|\pmb{W}_{b,v}|^{2\beta-2}\Re(\pmb{W}_{b,v}\Lambda\overline{\pmb{W}}_{b,v})\nonumber\\
&=\beta(\pmb{\Psi}_{b,v}+\pmb{J}b\Lambda \pmb{W}_{b,v}-\pmb{J}v\nabla \pmb{W}_{b,v}-\pmb{J}bv\partial_v \pmb{W}_{b,v}-\pmb{W}_{b,v})\nonumber\\
&\quad-\Lambda\pmb{\Psi}_{b,v}-\pmb{J}b\Lambda^2\pmb{W}_{b,v}+\pmb{J}v\Lambda(\nabla \pmb{W}_{b,v})+\pmb{J}bv\Lambda(\partial_v\pmb{W}_{b,v}).
\end{align}
\end{enumerate}
\end{proposition}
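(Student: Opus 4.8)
The plan is to construct the profile by the classical order-by-order scheme of the Merle--Rapha\"el program, adapted to the nonlocal operator. I would first linearize the elliptic part of \eqref{22} at the ground state: writing $\pmb{W}_{b,v}=\pmb{Q}_\beta+\pmb{\epsilon}$, the terms $-|D|^{\frac{\beta}{2}}\pmb{W}_{b,v}-\pmb{W}_{b,v}+|\pmb{W}_{b,v}|^{2\beta}\pmb{W}_{b,v}$ linearize to $-\pmb{L}^\beta\pmb{\epsilon}$, while $\pmb{J}b\Lambda\pmb{W}_{b,v}$, $-\pmb{J}v\pmb{W}'_{b,v}$ and $-\pmb{J}bv\partial_v\pmb{W}_{b,v}$ act as forcing. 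The algebraic inputs I would record at the outset are the kernel relation $L_-^\beta Q_\beta=0$, the scaling identity $L_+^\beta\Lambda Q_\beta=-\beta Q_\beta$ (obtained by applying $\Lambda$ to \eqref{16} and using $[|D|^\beta,\Lambda]=\beta|D|^\beta$ together with $\Lambda(Q_\beta^{2\beta+1})=(2\beta+1)Q_\beta^{2\beta}\Lambda Q_\beta-\beta Q_\beta^{2\beta+1}$), and the parity preservation of $L_\pm^\beta$ coming from the evenness of $Q_\beta$.

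Next I would substitute the ansatz \eqref{21} into \eqref{22} and collect the contribution at each monomial $b^k v^\ell$. Since the correction starts at first order, this yields a triangular hierarchy $\pmb{L}^\beta\pmb{R}_{k,\ell}=\pmb{F}_{k,\ell}$ in which each source $\pmb{F}_{k,\ell}$ depends only on previously constructed profiles. The two lowest orders display the mechanism: at order $b$ one solves $L_-^\beta(R_{1,0})_2=\Lambda Q_\beta$, solvable because $(\Lambda Q_\beta,Q_\beta)=0$ (scaling invariance of the mass), and at order $v$ one solves $L_-^\beta(R_{0,1})_2=-Q_\beta'$, solvable because $(Q_\beta',Q_\beta)=0$ by parity, producing an odd profile connected to the function $G_1$ in \eqref{217}. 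At every subsequent order the solvability condition reduces to orthogonality of $\pmb{F}_{k,\ell}$ to $\ker\pmb{L}^\beta=\mathrm{span}\{Q_\beta,Q_\beta'\}$, which I would verify using the parity grading (the $b$-hierarchy lives in the even sectors, the odd powers of $v$ in the odd sectors) together with the scaling identity above; inverting $\pmb{L}^\beta$ on the orthogonal complement of its kernel then fixes each $\pmb{R}_{k,\ell}$ uniquely by the kernel-free choice. Truncating after the listed monomials leaves the residual $\pmb{\Psi}_{b,v}$, and the bounds \eqref{23}--\eqref{219} follow by collecting the uncancelled higher-order terms and using the smallness of $b,v$ together with property (2).

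For the regularity and decay estimate \eqref{24} I would argue inductively in the ordering: elliptic regularity for $|D|^\beta$ promotes each $\pmb{R}_{k,\ell}$ to $H^m$ for all $m$, while the decay weight $\langle x\rangle^{1+\beta}$ is dictated by $Q_\beta(y)\sim|y|^{-(1+\beta)}$ from \eqref{17}. The energy degeneracy \eqref{240} I would obtain by inserting \eqref{21} into \eqref{12} and expanding: the linear term vanishes because $Q_\beta$ is a constrained critical point and the corrections lie in the imaginary/odd directions, and the pure-$b$ quadratic contributions cancel, so that the leading surviving term is $c_0v^2$, with $c_0>0$ identified from $(R_{0,1})_2$. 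Finally, the scaling invariance \eqref{246} is a formal identity: I would apply $\Lambda$ to \eqref{22}, commute it through $|D|^\beta$ via $[|D|^\beta,\Lambda]=\beta|D|^\beta$ and through the nonlinearity via $\Lambda(fg)=(\Lambda f)g+f(\Lambda g)-\tfrac12 fg$, and rearrange.

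The genuine obstacle I anticipate is the decay estimate in \eqref{24}. In the local case $\beta=2$ the profiles decay exponentially and $(\pmb{L}^\beta)^{-1}$ is controlled by ODE methods, whereas here the nonlocality of $|D|^\beta$ allows only the algebraic rate $\langle y\rangle^{-(1+\beta)}$, and one must show that $\big(L_\pm^\beta\big)^{-1}$ preserves this weighted class, i.e.\ that solving $L_\pm^\beta R=F$ with $F\in\langle y\rangle^{-(1+\beta)}L^\infty$ returns $R$ with the same decay. This resolvent analysis on polynomially weighted spaces is the technical heart of the construction and the main point where the argument must depart from the classical scheme.
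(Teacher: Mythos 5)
Your overall scheme --- linearization at $\pmb{Q}_\beta$, the triangular hierarchy $\pmb{L}^\beta\pmb{R}_{k,\ell}=\pmb{F}_{k,\ell}$ with solvability checked by parity and the identities $L_+^\beta\Lambda Q_\beta=-\beta Q_\beta$, $L_-^\beta Q_\beta=0$, the weighted resolvent bounds for $(L_\pm^\beta)^{-1}$ on $\langle y\rangle^{-(1+\beta)}L^\infty$, and the derivation of \eqref{246} by commuting $\Lambda$ through the equation --- is the same as the paper's, and your identification of the weighted invertibility of $L_\pm^\beta$ as a key technical input matches Lemma \ref{L1}. One minor divergence: the paper does not expand $E(\pmb{W}_{b,v})$ directly but first proves the Pohozaev-type identity \eqref{241}, which makes the cancellation of all pure-$b$ contributions up to $O(|b|^5)$ automatic (they are absorbed into $\Re\int\overline{\pmb{\Psi}_{b,v}}\Lambda\pmb{W}_{b,v}$) and identifies $c_0=(L_-^\beta G_1,G_1)>0$ from the single term $-v\Im\int\nabla\overline{\pmb{W}}_{b,v}\,\Lambda\pmb{W}_{b,v}$; your direct expansion would have to verify the $b^2$, $b^3$, $b^4$ cancellations by hand.

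The genuine gap is that you nowhere address the limited smoothness of the nonlinearity $z\mapsto|z|^{2\beta}z$ when $2\beta$ is not an even integer. Both the definition of the sources $\pmb{F}_{k,\ell}$ (``collecting the contribution at each monomial $b^kv^\ell$'') and, more seriously, the error bounds \eqref{23}--\eqref{219} require expanding $|\pmb{W}_{b,v}|^{2\beta}\pmb{W}_{b,v}$ to fifth order in $(b,v)$ with a controlled remainder; since $|z|^{2\beta}z$ is only finitely differentiable at $z=0$, this breaks down wherever $\pmb{W}_{b,v}$ could vanish or the corrections could dominate $Q_\beta$. The paper's resolution is the pointwise lower bound \eqref{252}, $|\pmb{W}_{b,v}(y)|\gtrsim(1+|y|)^{-1-\beta}>0$, which holds precisely because the corrections $\pmb{R}_{k,\ell}$ decay no slower than $Q_\beta$ itself, so that $Q_\beta$ dominates uniformly in $y$ for $|b|,|v|$ small; this is why the proof is organized with the decay estimate \eqref{24} assumed \emph{a priori} in a first step and verified afterwards, and why the error term is then controlled via the integral Taylor remainder \eqref{239}. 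Your proposal uses the decay estimate only as an input to the resolvent analysis; without also invoking it to secure the $C^5$-differentiability of the nonlinearity in $(b,v)$, the step ``the bounds \eqref{23}--\eqref{219} follow by collecting the uncancelled higher-order terms'' does not go through. This is the one place where the fractional construction genuinely departs from the polynomial nonlinearity of the local case, and it needs to be made explicit.
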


\begin{remark}
For simplicity, we omit the dependence of $\beta$ here.
\end{remark}
\begin{remark}
The proof of Proposition \ref{P1} is similar to \cite[Proposition 4.1]{KLR}, but due to the different objects we are dealing with, the nonlinear profile is slightly different.
\end{remark}
\begin{remark}\label{25}
In the proof of Proposition \ref{P1}, we actually show that the functions $\pmb{R}_{k,\ell}$ have the following structure:
\begin{align}
&\pmb{R}_{1,0}=
\begin{bmatrix}
0\\
\mathrm{even}
\end{bmatrix},
\;
\pmb{R}_{2,0}=
\begin{bmatrix}
\mathrm{even}\\
0
\end{bmatrix},
\;
\pmb{R}_{3,0}=
\begin{bmatrix}
0\\
\mathrm{even}
\end{bmatrix},
\;
\pmb{R}_{4,0}=
\begin{bmatrix}
\mathrm{even}\\
0
\end{bmatrix},\nonumber\\
&\pmb{R}_{0,1}=
\begin{bmatrix}
0\\
\mathrm{odd}
\end{bmatrix},
\;
\pmb{R}_{0,2}=
\begin{bmatrix}
\mathrm{even}\\
0
\end{bmatrix},
\;
\pmb{R}_{1,1}=
\begin{bmatrix}
\mathrm{odd}\\
0
\end{bmatrix},
\;
\pmb{R}_{2,1}=
\begin{bmatrix}
0\\
\mathrm{odd}
\end{bmatrix}.
\end{align}
Here the words ``even" and ``odd" mean that the real or imaginary part of $\pmb{R}_{k,\ell}$ is an even or odd function.
\end{remark}

\begin{remark}
We will see that the estimate on the error term $\pmb{\Psi}_{b,v}$ determines the upper bound of the blow-up rate. We expect the estimate can be improved to $e^{-C/|b|}$, which will lead to a log-log upper bound on the blow-up rate as the numeric simulation \cite{KSM} predicted. But this is still not known.

On the other hand, from the proof of Proposition \ref{P1}, one may expand $W_{b,v}$ to higher order to obtain a better approximation. But this does not improve the result of this paper too much, since the new result is also not likely to be sharp. Besides, the equation at the next order of $b$ is significantly more complicated than the equation at the order of $|b|^3$ and $|b|^4$. Although the author believes it is true, checking them will involve a lot more technical difficulties.
\end{remark}

The idea to construct such a blow-up profile is to write down the equation of each $\pmb{R}_{b,v}$. We will see that the solvability of these equations follows from the invertibility of the linearized operator $\pmb{L}^{\beta}$. We will give a list of properties of $\pmb{L}^{\beta}$ that will be used in this paper. Their proofs are either direct or can be found in \cite{AT,FL,FLS} and \cite[Appendix A]{KLR}.

\begin{lemma}[Properties of the linearized operator]
\label{L1}
The linearized operator 
$$\pmb{L}^{\beta}=
\begin{bmatrix}
L_+^{\beta}&0\\
0&L_-^{\beta}
\end{bmatrix}
$$
has the following properties:
\begin{enumerate}
\item Kernel%
\footnote{Recall that the scalar operators $L^{\beta}_+$ and $L^{\beta}_-$ are acting on $L^2(\mathbb{R};\mathbb{R})$, while the operator $\pmb{L}^{\beta}$ is acting on $L^2(\mathbb{R};\mathbb{C})$.}%
: $\ker L^{\beta}_{+}=\mathrm{span}\{\nabla Q_{\beta}\}$, $\ker L^{\beta}_{-}=\mathrm{span}\{Q_{\beta}\}$, and 
\begin{equation}
\label{26}
\ker \pmb{L}^{\beta}=\mathrm{span} \bigg\{
\begin{bmatrix}
\nabla Q_{\beta}\\
0
\end{bmatrix},
\begin{bmatrix}
0\\
Q_{\beta}
\end{bmatrix}
\bigg\}.
\end{equation}
For all $f_1,f_2\in L^2(\mathbb{R};\mathbb{R})$, if $(f_1,\nabla Q_{\beta})=(f_2,Q_{\beta})=0$, then there exist unique $g_1,g_2\in H^{\beta}(\mathbb{R};\mathbb{R})$ such that $L_+^{\beta}g_1=f_1$ and $L^{\beta}_-g_2=f_2$.
\item Eigenvalue: $L^{\beta}_+$ has exactly two eigenvalues: $0$ and a negative one $\kappa_0<0$. The negative eigenvalue $\kappa_0$ is associated to an even positive eigenfunction $\chi_0$.
\item Coercivity: For all $\varepsilon_1,\varepsilon_2\in H^{\frac{\beta}{2}}(\mathbb{R};\mathbb{R})$, if $(\varepsilon_1,\nabla Q_{\beta})=(\varepsilon_1,\chi_0)=(\varepsilon_2,Q_{\beta})=0$, then we have
\begin{equation}
\label{27}
(L_+^{\beta}\varepsilon_1,\varepsilon_1)\geq (\varepsilon_1,\varepsilon_1),\quad (L_-^{\beta}\varepsilon_2,\varepsilon_2)\geq (\varepsilon_2,\varepsilon_2).
\end{equation}
\item Scaling rule:
\begin{equation}
\label{29}
L^{\beta}_+(\Lambda Q_{\beta})=-\beta Q_{\beta},\quad 
\end{equation}
\item Invertibility: For all $f,g\in H^k(\mathbb{R};\mathbb{R})$ and $k\in\mathbb{N}$, suppose $(f,\nabla Q_{\beta})=(g,Q_{\beta})=0$, then we have 
\begin{equation}\label{28}
\|(L^{\beta}_+)^{-1}f\|_{H^{k+\beta}}\lesssim_k\|f\|_{H^k},\quad \|(L^{\beta}_-)^{-1}g\|_{H^{k+\beta}}\lesssim_k\|g\|_{H^k},
\end{equation}
and the decay estimates
\begin{align}
&\|\langle x\rangle^{1+\beta}(L^{\beta}_+)^{-1}f\|_{L^{\infty}}\lesssim \|\langle x\rangle^{1+\beta}f\|_{L^{\infty}},\label{210}\\
&\|\langle x\rangle^{1+\beta}(L^{\beta}_-)^{-1}g\|_{L^{\infty}}\lesssim \|\langle x\rangle^{1+\beta}g\|_{L^{\infty}}\label{211}.
\end{align}
\end{enumerate}
\end{lemma}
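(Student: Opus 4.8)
The plan is to establish the five items in order of depth, drawing the genuinely spectral inputs from Frank and Lenzmann \cite{FL} and Frank, Lenzmann and Silvestre \cite{FLS}, and supplying direct computations for the algebraic and soft-analytic parts.

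For (1), the two inclusions are read off from the symmetries of \eqref{16}. Writing the ground state equation as $|D|^{\beta}Q_{\beta}+Q_{\beta}=Q_{\beta}^{2\beta+1}$ (recall $Q_{\beta}>0$) and differentiating in $x$ gives $L_{+}^{\beta}(\nabla Q_{\beta})=0$, which reflects translation invariance, while the equation itself reads $L_{-}^{\beta}Q_{\beta}=0$, reflecting the phase invariance \eqref{15}. That these span the full kernels is the non-degeneracy theorem of \cite{FLS} (and \cite{FL} for $\beta$ near $2$), which I would invoke directly; the decomposition \eqref{26} is then immediate from the block structure of $\pmb{L}^{\beta}$, and the stated solvability is the Fredholm alternative on the orthogonal complement of the kernel. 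For (2), since the potential $-(2\beta+1)Q_{\beta}^{2\beta}$ is bounded and decaying, Weyl's theorem gives $\sigma_{\mathrm{ess}}(L_{+}^{\beta})=[1,\infty)$, so all spectrum below $1$ is discrete; that there is exactly one negative eigenvalue with simple zero eigenvalue, and that the bottom eigenfunction $\chi_{0}$ is positive and even (positivity via the Perron--Frobenius property of $e^{-tL_{+}^{\beta}}$, evenness via commutation with $y\mapsto-y$ together with simplicity), is again part of the analysis of \cite{FL,FLS}.

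The scaling rule (4) is the one identity I would carry out in full, since it is purely computational. The key point is that $|D|^{\beta}$ is homogeneous of degree $\beta$ under dilations, $|D|^{\beta}\bigl(f(\lambda\,\cdot)\bigr)=\lambda^{\beta}(|D|^{\beta}f)(\lambda\,\cdot)$; differentiating at $\lambda=1$ yields the commutator $[\Lambda,|D|^{\beta}]=-\beta|D|^{\beta}$, equivalently $|D|^{\beta}(\Lambda Q_{\beta})=\Lambda(|D|^{\beta}Q_{\beta})+\beta|D|^{\beta}Q_{\beta}$. Substituting $|D|^{\beta}Q_{\beta}=-Q_{\beta}+Q_{\beta}^{2\beta+1}$ and the Leibniz-type identity $\Lambda(Q_{\beta}^{2\beta+1})=(2\beta+1)Q_{\beta}^{2\beta}\Lambda Q_{\beta}-\beta Q_{\beta}^{2\beta+1}$, the $Q_{\beta}^{2\beta+1}$ terms cancel and one is left with $L_{+}^{\beta}(\Lambda Q_{\beta})=-\beta Q_{\beta}$.

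With (1) and (2) available, the coercivity (3) is a spectral-projection argument. Decomposing $\varepsilon_{1}=a\chi_{0}+c\,\nabla Q_{\beta}+\varepsilon_{1}^{\perp}$ with $\varepsilon_{1}^{\perp}\perp\{\chi_{0},\nabla Q_{\beta}\}$, the hypotheses force $a=c=0$, and by (2) the spectrum of $L_{+}^{\beta}$ on $\{\chi_{0},\nabla Q_{\beta}\}^{\perp}$ lies in $[1,\infty)$, so $(L_{+}^{\beta}\varepsilon_{1},\varepsilon_{1})\ge(\varepsilon_{1},\varepsilon_{1})$; the same reasoning for $L_{-}^{\beta}$, whose only eigenvalue below $[1,\infty)$ is the simple $0$ spanned by $Q_{\beta}$, gives the second inequality. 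Finally, for (5) the Sobolev bound \eqref{28} is elliptic bootstrapping: from $L_{+}^{\beta}g=f$ one has $(|D|^{\beta}+1)g=f+(2\beta+1)Q_{\beta}^{2\beta}g$, and since $|D|^{\beta}+1\colon H^{k+\beta}\to H^{k}$ is an isomorphism and multiplication by the fast-decaying $Q_{\beta}^{2\beta}$ is bounded on each $H^{k}$, one gains $\beta$ derivatives and iterates. The weighted estimates \eqref{210}--\eqref{211} are the main obstacle: they rest on the convolution kernel of $(|D|^{\beta}+1)^{-1}$ decaying like $\langle x\rangle^{-(1+\beta)}$, exactly the rate \eqref{17} of $Q_{\beta}$, so that convolving it against a datum with the same weighted-$L^{\infty}$ bound reproduces that bound (the rate being integrable in one dimension), after which the faster-decaying potential term closes the bootstrap. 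Verifying that this borderline convolution indeed closes at the rate $\langle x\rangle^{-(1+\beta)}$, using the resolvent kernel estimates of \cite{FL,FLS}, is where I expect the real work to lie.
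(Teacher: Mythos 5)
Your proposal is correct and follows essentially the same route as the paper's Appendix~\ref{A1}: citing \cite{FL,FLS} for the kernel/eigenvalue structure, obtaining the scaling rule \eqref{29} by differentiating the dilation family of the ground state equation (your commutator $[\Lambda,|D|^{\beta}]=-\beta|D|^{\beta}$ is the infinitesimal form of the paper's computation with $Q_{\beta,\lambda}$), proving \eqref{28} by the $(|D|^{\beta}+1)^{-1}$ bootstrap with induction on $k$, and proving \eqref{210}--\eqref{211} via the $\langle x\rangle^{-(1+\beta)}$ decay of the resolvent kernel $K$ and the weighted convolution estimate you correctly identify as the main computation. The only cosmetic difference is that you derive the coercivity (3) from (2) by spectral projection, whereas the paper simply cites \cite{FL,FLS} for it as well.
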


We now turn to the proof of Proposition \ref{P1}.  As mentioned before, the idea is to solve \eqref{21} order by order. Here in the nonlocal case when $1<\beta<2$, the nonlinear term $|\pmb{W}_{b,v}|^{2\beta}\pmb{W}_{b,v}$ is generally not smooth with respect to $b$ and $v$, when $|b|$ and $|v|$ are small enough. But we have 
$$\big(|\pmb{W}_{b,v}|^{2\beta}\pmb{W}_{b,v}\big)\Big|_{b=0,v=0}=
\begin{bmatrix}
Q_{\beta}^{2\beta+1}
\\0
\end{bmatrix}.
$$
Here the ground state $Q_{\beta}$ satisfies
$$Q_{\beta}(y)\gtrsim \frac{1}{\langle y\rangle^{1+\beta}},$$
for all $y\in\mathbb{R}$. Hence if the decay estimate \eqref{24} holds true, then for small enough $b$ and $v$, the nonlinear term $|\pmb{W}_{b,v}|^{2\beta}\pmb{W}_{b,v}$ is infinitely differentiable with respect to $b$ and $v$, which makes it possible to solve each $\pmb{R}_{k,\ell}$ order by order

\begin{proof}[Proof of Proposition \ref{P1}]As mentioned before, we will prove Proposition \ref{P1} in the following steps.

\subsubsection*{\bf Step 1: Solving $\pmb{R}_{k,\ell}$ formally.} We first solve each $\pmb{R}_{k,\ell}$ order by order.

\textit{Order $\mathcal{O}(1)$}: It is easy to see that $\pmb{Q}_{\beta}=[Q_{\beta},0]^{\top}$ is what we need.

\textit{Order $\mathcal{O}(b)$}: A standard computation shows that $\pmb{R}_{1,0}$ satisfies
\begin{equation}
\label{214}
\pmb{L}^{\beta}\pmb{R}_{1,0}=\pmb{J}\Lambda\pmb{Q}_{\beta}.
\end{equation}
We note that $\pmb{J}\Lambda\pmb{Q}_{\beta}=[0,\Lambda Q_{\beta}]^{\top}$ satisfying $\pmb{J}\Lambda\pmb{Q}_{\beta}\perp\ker \pmb{L}^{\beta}$, since $(\Lambda Q_{\beta},Q_{\beta})=0$. Hence we may chose
\begin{equation}
\label{215}
\pmb{R}_{1,0}=\begin{bmatrix}
0\\S_1
\end{bmatrix},
\end{equation}
where $S_1=(L^{\beta}_-)^{-1}\Lambda Q_{\beta}$.

\textit{Order $\mathcal{O}(v)$}: For $\pmb{R}_{0,1}$, we have the following equation
\begin{equation}
\label{216}
\pmb{L}^{\beta}\pmb{R}_{0,1}=-\pmb{J}\nabla \pmb{Q}_{\beta}.
\end{equation}
It is also easy to check that $\pmb{J}\nabla \pmb{Q}_{\beta}=[0,\nabla Q_{\beta}]^\top$ satisfies $\pmb{J}\nabla \pmb{Q}_{\beta}\perp\ker\pmb{L}^{\beta}$, since $Q_{\beta}$ is an even function while $\nabla Q_{\beta}$ is an odd function. Hence we may choose
\begin{equation}
\label{217}
\pmb{R}_{0,1}=\begin{bmatrix}
0\\
G_1
\end{bmatrix},
\end{equation}
where $G_1=(-L^{\beta}_-)^{-1}\nabla Q_{\beta}$.

\textit{Order $\mathcal{O}(bv)$}: For $\pmb{R}_{1,1}$, we note that $\Re\pmb{R}_{1,0}=\Re\pmb{R}_{0,1}=\Im\pmb{Q}_{\beta}=0$. Hence, we obtain the following equation for $\pmb{R}_{1,1}$:
\begin{equation}
\label{218}
\pmb{L}^{\beta}\pmb{R}_{1,1}=-\pmb{J}\pmb{R}_{0,1}+\pmb{J}\Lambda \pmb{R}_{0,1}-\pmb{J}\nabla \pmb{R}_{1,0}+2\beta\Re(\pmb{R}_{1,0}\cdot\bar{\pmb{R}}_{0,1})\pmb{Q}_{\beta}^{2\beta-1}
\end{equation}
To find such $\pmb{R}_{1,1}$, we need to check that the right hand side of \eqref{218} $\perp\ker\pmb{L}^{\beta}$, which is equivalent to
\begin{equation}
\label{220}
(G_1-\Lambda G_1+\nabla S_1+2\beta S_1G_1Q_{\beta}^{2\beta-1},\nabla Q_{\beta})=0.
\end{equation}
Using the standard commutator formula $[\Lambda,\nabla]=-\nabla$, we have
\begin{align}
\label{221}
-(\nabla Q_{\beta},\Lambda G_1)&=(\Lambda \nabla Q_{\beta},G_1)=(\nabla\Lambda Q_{\beta},G_1)-(\nabla Q_{\beta},G_1)\nonumber\\
&=(\nabla L_-^{\beta}S_1,G_1)-(\nabla Q_{\beta},G_1).
\end{align}
We also have
\begin{align}
\label{222}
&(\nabla L_-^{\beta}S_1,G_1)+(\nabla Q_{\beta},\Lambda G_1)=-(S_1,L_-^{\beta}\nabla G_1)+(\nabla L_-^{\beta} G_1,S_1)\nonumber\\
&=(S_1, [\nabla,L_-^{\beta}]G_1)=-(S_1,\nabla (Q_{\beta}^{2\beta})G_1)=-2\beta(\nabla Q_{\beta},S_1G_1Q_{\beta}^{2\beta-1})
\end{align}
Combining \eqref{221} and \eqref{222}, we obtain \eqref{220}. Hence, there exist a unique $\pmb{R}_{1,1}\perp\ker \pmb{L}^{\beta}$ satisfying \eqref{218}. Since, $Q_{\beta}$ and $S_1$ are even functions and $G_1$ is odd, we have
\begin{equation}
\label{223}
\pmb{R}_{1,1}=\begin{bmatrix}
F_{1}\\
0
\end{bmatrix},
\end{equation} 
with some odd function $F_{1}$.

\textit{Order $\mathcal{O}(b^2)$}: We have the following equation for $\pmb{R}_{2,0}$:
\begin{equation}
\label{224}
\pmb{L}^{\beta}\pmb{R}_{2,0}=\pmb{J}\Lambda\pmb{R}_{1,0}+\beta|\pmb{R}_{1,0}|^2\pmb{Q}_{\beta}^{2\beta-1}.
\end{equation}
Since $\pmb{R}_{1,0}=[0,S_1]^\top$, the solvability condition for $\pmb{R}_{2,0}$ is equivalent to
\begin{equation}
\label{225}
-(\nabla Q_{\beta},\Lambda S_1)+(\nabla Q_{\beta},\beta S_1^2Q_{\beta}^{2\beta-1})=0
\end{equation}
Since, $S_1$ and $Q_{\beta}$ are both even functions, the above condition is obviously true. Hence, we have
\begin{equation}
\label{226}
\pmb{R}_{2,0}=\begin{bmatrix}
S_2\\
0
\end{bmatrix},
\end{equation}
with some even function $S_2=(L_+^{\beta})^{-1}(-\Lambda S_1+\beta S_1^2Q_{\beta}^{2\beta-1})$.

\textit{Order $\mathcal{O}(v^2)$}: We have the following equation for $\pmb{R}_{0,2}$
\begin{equation}
\label{227}
\pmb{L}^{\beta}\pmb{R}_{0,2}=-\pmb{J}\nabla \pmb{R}_{0,1}+\beta|\pmb{R}_{0,1}|^2\pmb{Q}_{\beta}^{2\beta-1}.
\end{equation}
Since $\pmb{R}_{0,1}=[0,G_1]^\top$, the solvability condition reduces to
\begin{equation}
\label{228}
(\nabla Q_{\beta},\nabla G_1)+(\nabla Q_{\beta},G_1^2Q_{\beta}^{2\beta-1})=0.
\end{equation}
This condition clearly holds true, since $G_1$ is odd and $Q_{\beta}$ is even. Hence there exist a unique $\pmb{R}_{0,2}$ satisfying \eqref{227} with
\begin{equation}
\label{229}
\pmb{R}_{0,2}=\begin{bmatrix}
G_2\\
0
\end{bmatrix},
\end{equation}
where $G_2=(L_+^{\beta})^{-1}(\nabla G_1+G_1^2Q_{\beta}^{2\beta-1})$ is an even function.

\textit{Order $\mathcal{O}(b^3)$}:  We notice that $\Re \pmb{R}_{1,0}=\Im \pmb{R}_{2,0}=0$, so we have the following equation for $\pmb{R}_{3,0}$,
\begin{equation}
\label{230}
\pmb{L}^{\beta}\pmb{R}_{3,0}=\pmb{J}\Lambda \pmb{R}_{2,0}+\beta Q^{2\beta-2}_{\beta}\big(2\pmb{Q}_{\beta}\cdot\pmb{R}_{2,0}+|\pmb{R}_{1,0}|^2\big)\pmb{R}_{1,0}.
\end{equation}
The solvability of \eqref{230} is equivalent to
\begin{equation}
(Q_{\beta},\Lambda S_2)+(Q_{\beta},2\beta S_1S_2Q_{\beta}^{2\beta-1})+(Q_{\beta},\beta Q_{\beta}^{2\beta-2}S_1^3)=0.
\end{equation}
Recall that 
$$L_-^{\beta}S_1=\Lambda Q_{\beta},\quad L_+^{\beta}S_2=-\Lambda S_1+\beta S_1^2Q_{\beta}^{2\beta-1}.$$
We have
\begin{align*}
&(Q_{\beta},\Lambda S_2)+(Q_{\beta},2\beta S_1S_2Q_{\beta}^{2\beta-1})+(Q_{\beta},\beta Q_{\beta}^{2\beta-2}S_1^3)\\
=&-(\Lambda Q_{\beta},S_2)+2\beta(S_2,Q_{\beta}^{2\beta}S_1)+\beta(Q_{\beta}^{2\beta-1},S_1^3)=-(L_+^{\beta}S_1,S_2)+\beta(Q_{\beta}^{2\beta-1},S_1^3)\\
=&-(S_1,-\Lambda S_1+\beta S_1^2Q_{\beta}^{2\beta-1})+\beta(Q_{\beta}^{2\beta-1},S_1^3)=0.
\end{align*}
Hence, there exists a unique $\pmb{R}_{3,0}$ satisfying \eqref{230} with
\begin{equation}
\label{231}
\pmb{R}_{3,0}=\begin{bmatrix}
0\\
S_3
\end{bmatrix},
\end{equation}
with some even function $S_3=(L_-^\beta)^{-1}(-\Lambda S_2+2\beta S_1S_2Q_{\beta}^{2\beta-1}+\beta S_1^3Q_{\beta}^{2\beta-2})$.

\textit{Order $\mathcal{O}(b^4)$}: We notice that $\Re \pmb{R}_{1,0}=\Im \pmb{R}_{2,0}=\Re \pmb{R}_{3,0}=0$, hence
\begin{align*}
|\pmb{W}_{b,v}|^{2\beta}\Big|_{v=0}&=|(Q_{\beta}+b^2S_2+b^4\Re\pmb{R}_{4,0})^2+b^2(S_1+b^2S_3+b^3\Im \pmb{R}_{4,0})^2|^{\beta}:=|F(b^2)|^{\beta}.
\end{align*}
Since $F\in C^3$ with $F(0)=Q^2_{\beta}>0$, considering the Taylor's expansion of $|F(b^2)|^\beta$ with respect to $b^2$ at $b^2=0$, we have
\begin{align*}
|\pmb{W}_{b,v}|^{2\beta}\Big|_{v=0}&=Q_{\beta}^{2\beta}+\beta Q_{\beta}^{2\beta-2}(2S_2Q_{\beta}+S_1^2)b^2+\Big[\beta(\beta-1) Q_{\beta}^{2\beta-4}(2S_2Q_{\beta}+S_1)^2\\
&\quad+\beta Q_{\beta}^{2\beta-2}(S_2^2+2Q_{\beta}\Re\pmb{R}_{4,0}+2S_1S_3)\Big]b^4+O(b^6)
\end{align*}
Hence, we have the following equation for $\pmb{R}_{4,0}$,
\begin{equation}
\label{232}
\pmb{L}^{\beta}\pmb{R}_{4,0}=\pmb{J}\Lambda \pmb{R}_{3,0}+\beta Q_{\beta}^{2\beta-2}(2S_2Q_{\beta}+S_1^2)\pmb{R}_{2,0}+H\pmb{Q}_{\beta},
\end{equation}
where $H=\beta^3(\beta-1) Q_{\beta}^{2\beta-4}(2S_2Q_{\beta}+S_1)^2+\beta Q_{\beta}^{2\beta-2}(S_2^2+2S_1S_3)$ is a real valued even function.
The solvability of \eqref{232} reduces to
\begin{equation}
\label{233}
\Big(\nabla Q_{\beta}, -\Lambda S_3+\beta Q_{\beta}^{2\beta-2}(2S_2Q_{\beta}+S_1^2)S_2+HQ_{\beta}\Big)=0.
\end{equation}
Since $S_1$, $S_2$, $S_3$ and $Q_{\beta}$ are all real valued even functions, the above condition is automatically satisfied. So, there exists a unique $\pmb{R}_{4,0}$ satisfying \eqref{232} with 
\begin{equation}
\label{234}
\pmb{R}_{4,0}=\begin{bmatrix}
S_4\\
0
\end{bmatrix},
\end{equation}
where $S_4=(L_+^\beta)^{-1}\big(-\Lambda S_3+\beta Q_{\beta}^{2\beta-2}(2S_2Q_{\beta}+S_1^2)S_2+HQ_{\beta}\big)$ is an even function.

\textit{Order $\mathcal{O}(b^2v)$}: Recall that $\Re \pmb{R}_{1,0}=\Im \pmb{R}_{2,0}=\Re\pmb{R}_{0,1}=\Im\pmb{R}_{1,1}=0$. We then have the following equation for $\pmb{R}_{2,1}$:
\begin{equation}
\label{235}
\pmb{L}^{\beta}\pmb{R}_{2,1}=\pmb{J}(\Lambda \pmb{R}_{1,1}-\nabla\pmb{R}_{2,0})+\beta Q^{2\beta-2}_{\beta}\big[(2Q_{\beta}S_2+S_1^2)\pmb{R}_{0,1}+(2Q_{\beta}F_1+2S_1G_1)\pmb{R}_{1,0}\big].
\end{equation}
Using the fact that $\Re \pmb{R}_{1,0}=\Im \pmb{R}_{2,0}=\Re\pmb{R}_{0,1}=\Im\pmb{R}_{1,1}=0$ again, we know that the solvability of \eqref{235} is equivalent to
\begin{equation}
\label{236}
\Big(Q_{\beta},-\Lambda F_1+\nabla S_2+\beta Q^{2\beta-2}_{\beta}\big[(2Q_{\beta}S_2+S_1^2)G_1+(2Q_{\beta}F_1+2S_1G_1)S_1\big]\Big)=0
\end{equation}
Since $Q_{\beta}$, $S_1$, $S_2$ are even functions while $F_1$, $G_1$ are odd functions, the above condition is automatically satisfied. So there exists a unique $\pmb{R}_{2,1}$ satisfying \eqref{232} with 
\begin{equation}
\label{237}
\pmb{R}_{2,1}=\begin{bmatrix}
0\\
F_2
\end{bmatrix},
\end{equation}
where $F_2=(L_-^{\beta})^{-1}\big(-\Lambda F_1+\nabla S_2+\beta Q^{2\beta-2}_{\beta}[(2Q_{\beta}S_2+S_1^2)G_1+(2Q_{\beta}F_1+2S_1G_1)S_1]\big)$ is an odd function.

\subsubsection*{\bf Step 2: Regularity and decay estimates for $\pmb{R}_{k,\ell}$.} Let $\mathcal{Y}$ be the set of all smooth functions $f$ satisfying the following property: $\forall k\in\mathbb{N}$, $\exists C_{k}>0$ such that $\forall y\in\mathbb{R}$
$$|\partial_y^k f(y)|\leq C_k(1+|y|)^{-1-\beta-k}.$$
We can easily show that $\mathcal{Y}$ has the following properties
\begin{enumerate}
\item If $f\in\mathcal{Y}$, then $\Lambda f,\nabla f\in \mathcal{Y}$.
\item if $f,g\in\mathcal{Y}$, then $fg\in\mathcal{Y}$.
\item $Q_{\beta}\in \mathcal{Y}$.
\end{enumerate}
Here the first two properties follow from direct computation, and the third one is proved in \cite{FL}.

Combining these properties with the equation of each $\pmb{R}_{k,\ell}$, we have for all $k,\ell$
\begin{equation}\label{238}
\pmb{L}^{\beta}\pmb{R}_{k,\ell}\in\mathcal{Y}.
\end{equation}
Since $[L^\beta_+,\nabla]=(2\beta+1)\nabla(Q_\beta^{2\beta})$ and $[L^\beta_-,\nabla]=\nabla(Q_\beta^{2\beta})$. From \eqref{28}, \eqref{210}, \eqref{211}, we obtain that $\pmb{R}_{k,\ell}\in\mathcal{Y}$, which implies \eqref{24} immediately.

\subsubsection*{\bf Step 3: Error term estimate.} As we mentioned before the regularity and decay condition of $\pmb{R}_{k,\ell}$ implies that for $|b|,|v|$ small enough, $\pmb{\Psi}_{b,v}$ is at least $C^{10}$-differentiable with respect to $b$ and $v$. Hence, by Taylor's expansion formula and the construction of $\pmb{R}_{k,\ell}$, we have
\begin{equation}
\label{239}
\pmb{\Psi}_{b,v}=b^5\pmb{T}_{5,0}+vb^3\pmb{T}_{3,1}+v^2b\pmb{T}_{1,2}+v^3\pmb{T}_{0,3},
\end{equation}
where
\begin{align*}
&\pmb{T}_{5,0}=\frac{1}{5!}\int_0^1\partial_b^5\pmb{\Psi}_{tb,0}\,\text{d}t,\quad\pmb{T}_{3,1}=\frac{1}{3!}\int_0^1\partial_b^3\partial_v\pmb{\Psi}_{tb,0}\,\text{d}t,\\
&\pmb{T}_{1,2}=\frac{1}{2!}\int_0^1\partial_b\partial_v^2\pmb{\Psi}_{tb,0}\,\text{d}t,\quad\pmb{T}_{0,3}=\frac{1}{3!}\int_0^1\partial_v^3\pmb{\Psi}_{b,tv}\,\text{d}t.
\end{align*}
Combining the regularity and decay estimate \eqref{24} for $\pmb{R}_{k,\ell}$, we obtain \eqref{23} and \eqref{219}.

\subsubsection*{\bf Step 4: Energy estimate.} The energy estimate \eqref{240} follows from direct computation of the equation of $\pmb{W}_{b,v}$. First, by integrating by parts, one may easily prove the following identity:
\begin{equation}
\label{241}
\beta E(\pmb{W}_{b,v})=\Re\int\overline{\big(|D|^{\beta}\pmb{W}_{b,v}+\pmb{W}_{b,v}-|\pmb{W}_{b,v}|^{2\beta}\pmb{W}_{b,v}-\pmb{J}b\Lambda\pmb{W}_{b,v}\big)}\Lambda\pmb{W}_{b,v}.
\end{equation}
Combining \eqref{22} and \eqref{241}, we have
\begin{align}
\label{242}
\beta E(\pmb{W}_{b,v})=&\Re\int\overline{\pmb{\Psi}_{b,v}}(\Lambda\pmb{W}_{b,v})-v\Im\int\nabla\overline{\pmb{W}_{b,v}}(\Lambda\pmb{W}_{b,v})\nonumber\\
&-bv\Im\int\partial_v(\overline{\pmb{W}_{b,v}})\Lambda\pmb{W}_{b,v}:=I+II+III.
\end{align}
For $I$, from the estimate \eqref{23} we obtain
\begin{equation}
\label{243}
|I|\lesssim |b|^5+v^2(|v|+|b|).
\end{equation}
For $II$ and $III$, from the construction of $\pmb{W}_{b,v}$, we know that $\pmb{W}_{b,v}$ has the following form
\begin{align*}
\pmb{W}_{b,v}=&\begin{bmatrix}
\mathrm{even}\\
0
\end{bmatrix}
+b\begin{bmatrix}
0\\
\mathrm{even}
\end{bmatrix}
+b^2\begin{bmatrix}
\mathrm{even}\\
0
\end{bmatrix}
+b^3\begin{bmatrix}
0\\
\mathrm{even}
\end{bmatrix}
+b^4\begin{bmatrix}
\mathrm{even}\\
0
\end{bmatrix}\\
&+v\begin{bmatrix}
0\\
\mathrm{odd}
\end{bmatrix}
+v^2\begin{bmatrix}
\mathrm{even}\\
0
\end{bmatrix}
+bv\begin{bmatrix}
\mathrm{odd}\\
0
\end{bmatrix}
+vb^2\begin{bmatrix}
0\\
\mathrm{odd}
\end{bmatrix}.
\end{align*}
Hence, we have
\begin{align}
\label{244}
II=-v^2\bigg(\int\nabla Q_\beta\Lambda G_1-\nabla G_1\Lambda Q_\beta\bigg)+O(|b|+|v|)v^2.
\end{align}
Using the commutator formula $[\nabla,\Lambda]=\nabla$ and \eqref{27}, we have
$$\int\nabla Q_\beta\Lambda G_1-\nabla G_1\Lambda Q_\beta=\big(G_1,[\nabla,\Lambda]Q_\beta\big)=(G_1,\nabla Q_\beta)=-(L_-^\beta G_1,G_1)<0.$$
Similarly, we have
\begin{equation}
\label{245}
III=-bv\Im\int\partial_v(\overline{\pmb{W}_{b,v}})\Lambda\pmb{W}_{b,v}=O(|b|+|v|)v^2.
\end{equation}
Injecting \eqref{243}--\eqref{245} into \eqref{242}, we obtain \eqref{240} immediately.

\subsubsection*{\bf Step 5: Scaling invariance.} Finally, for the scaling invariance, we have for any regular enough functions $\omega$ and $\Omega$ satisfying
$$|D|^{\beta}\omega+\omega-|\omega|^{2\beta}\omega=\Omega,$$
we consider $\omega_{\lambda}(y)=\lambda^{1/2}\omega(\lambda y)$, $\Omega_{\lambda}(y)=\lambda^{1/2}\Omega(\lambda y)$ for $\lambda>0$. Then we have
\begin{equation}
\label{247}
|D|^{\beta}\omega_{\lambda}+\lambda^\beta\omega_\lambda-|\omega_\lambda|^{2\beta}\omega_\lambda=\lambda^\beta\Omega_\lambda.
\end{equation}
Differentiating \eqref{247} with respect to $\lambda$ and taking $\lambda=1$, we obtain 
\begin{equation}\label{248}
|D^{\beta}|(\Lambda\omega)+\Lambda\omega-\Lambda\omega|\omega|^{2\beta}-2\beta\omega|\omega|^{2\beta-2}\Re(\omega\Lambda\overline{\omega})=\beta(\Omega-\omega)+\Lambda\Omega,
\end{equation}
where we use the following facts for the above estimate:
$$\bigg(\frac{d}{d\lambda}\omega_\lambda\bigg)\bigg|_{\lambda=1}=\Lambda\omega,\quad\bigg(\frac{d}{d\lambda}|\omega_\lambda|^{2\beta}\bigg)\bigg|_{\lambda=1}=2\beta|\omega|^{2\beta-2}\Re(\omega\Lambda\overline{\omega}).$$
Now, we apply \eqref{248} to $\omega=\pmb{W}_{b,v}$ and 
$$\Omega=\pmb{\Psi}_{b,v}+\pmb{J}b\Lambda\pmb{W}_{b,v}-\pmb{J}v\nabla\pmb{W}_{b,v}-\pmb{J}bv\partial_v\pmb{W}_{b,v},$$
then we obtain \eqref{246}, which concludes the proof of Proposition \ref{P1}.
\end{proof}

\section{Modulation theory}
In this section, we build a general setting of solution with negative energy and slightly super critical mass to \eqref{CP}. We use the variation properties of the ground state $Q_{\beta }$ and conservation laws of \eqref{CP} to establish a sharp geometrical decomposition for such solution and study its basic properties.
\subsection{Geometrical decomposition}
We start with the variation structure of $Q_{\beta}$.
\begin{lemma}[Variation characterization]\label{L3}
There exists $\alpha_1>0$ such that for all $0<\alpha'<\alpha_1$ and $u_0\in H^{\frac{\beta}{2}}(\mathbb{R})$ satisfying
\begin{equation}
\label{31}
\bigg|\int|u_0|^2-\int Q^2_{\beta}\bigg|\leq\alpha',\quad E(u_0)\leq \alpha'\int\big||D|^{\frac{\beta}{2}}u_0\big|^2,
\end{equation}
there exist constants $\lambda_0>0$, $x_0\in\mathbb{R}$, $\gamma_0\in\mathbb{R}$ such that
\begin{equation}\label{32}
\|Q_{\beta}(\cdot)-e^{i\gamma_0}\lambda_0^{1/2}u_0(\lambda_0\cdot+x_0)\|_{H^{\frac{\beta}{2}}}\leq\delta(\alpha').
\end{equation}
\end{lemma}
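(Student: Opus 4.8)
The plan is to deduce the statement from the sharp Gagliardo--Nirenberg inequality \eqref{121} together with the compactness of its near-optimizers, via a contradiction argument. I first record the analytic content of the hypotheses \eqref{31}. We may assume $\||D|^{\frac{\beta}{2}}u_0\|_{L^2}>0$, since otherwise $u_0\equiv 0$, contradicting the mass condition once $\alpha'$ is small. Rewriting the bound $E(u_0)\leq\alpha'\||D|^{\frac{\beta}{2}}u_0\|_{L^2}^2$ gives
\begin{equation*}
\frac{1}{2\beta+2}\|u_0\|_{L^{2\beta+2}}^{2\beta+2}\geq\Big(\tfrac12-\alpha'\Big)\big\||D|^{\tfrac{\beta}{2}}u_0\big\|_{L^2}^2 .
\end{equation*}
Dividing by the kinetic energy and comparing with \eqref{121} shows that the Gagliardo--Nirenberg ratio of $u_0$ is within $O(\alpha')$ of its optimal value $C^*$.

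Next I would compute $C^*$ explicitly in terms of the mass of $Q_\beta$, so as to read off that the mass of $u_0$ is forced to be near-critical. Testing \eqref{16} against $Q_\beta$ yields $\||D|^{\frac{\beta}{2}}Q_\beta\|_{L^2}^2+\|Q_\beta\|_{L^2}^2=\|Q_\beta\|_{L^{2\beta+2}}^{2\beta+2}$, while $E(Q_\beta)=0$ gives $\|Q_\beta\|_{L^{2\beta+2}}^{2\beta+2}=(\beta+1)\||D|^{\frac{\beta}{2}}Q_\beta\|_{L^2}^2$; together these produce $C^*=(\beta+1)/\|Q_\beta\|_{L^2}^{2\beta}$. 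Inserting this into the ratio bound above and combining with $\|u_0\|_{L^2}^2\leq\|Q_\beta\|_{L^2}^2+\alpha'$ forces $\|u_0\|_{L^2}\to\|Q_\beta\|_{L^2}$ and the ratio $\to C^*$ as $\alpha'\to 0$. Thus any family of data satisfying \eqref{31} with $\alpha'\to 0$ is a maximizing family for the Gagliardo--Nirenberg ratio with near-critical mass.

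I would then argue by contradiction. If the conclusion failed there would be $\delta_0>0$, a sequence $\alpha'_n\to 0$, and data $u_n$ satisfying \eqref{31} with $\alpha'_n$, such that
\begin{equation*}
\inf_{\lambda_0,x_0,\gamma_0}\big\|Q_\beta(\cdot)-e^{i\gamma_0}\lambda_0^{1/2}u_n(\lambda_0\cdot+x_0)\big\|_{H^{\frac{\beta}{2}}}\geq\delta_0 .
\end{equation*}
Using scaling invariance of the mass, rescale $u_n$ to $v_n(y)=\lambda_n^{1/2}u_n(\lambda_n y)$ with $\lambda_n$ chosen so that $\||D|^{\frac{\beta}{2}}v_n\|_{L^2}$ is normalized; then $\|v_n\|_{L^2}=\|u_n\|_{L^2}\to\|Q_\beta\|_{L^2}$, so $(v_n)$ is bounded in $H^{\frac{\beta}{2}}$, and by the previous step it is a maximizing sequence for \eqref{121}. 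A concentration-compactness / profile-decomposition argument in $H^{\frac{\beta}{2}}(\mathbb{R})$ applied to $(v_n)$ then yields, after translation $v_n(\cdot+x_n)$ and passage to a subsequence, a strongly convergent sequence in $H^{\frac{\beta}{2}}$ whose limit $V$ is an optimizer of \eqref{121}: vanishing is excluded because $\|v_n\|_{L^{2\beta+2}}$ stays bounded below by near-optimality, and dichotomy is excluded by the strict subadditivity of the variational problem. By the uniqueness of optimizers up to symmetry (Frank--Lenzmann \cite{FL}, Frank--Lenzmann--Silvestre \cite{FLS}), $V=e^{i\gamma_0}\mu^{1/2}Q_\beta(\mu\cdot)$ for some $\gamma_0\in\mathbb{R}$, $\mu>0$. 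Absorbing $\mu$ and $x_n$ into the scaling and translation parameters, strong convergence gives parameters driving the distance in the displayed infimum to $0$, contradicting the $\delta_0$-separation.

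I expect the concentration-compactness step to be the main obstacle. The difficulty specific to the fractional setting is that $|D|^{\beta/2}$ is nonlocal, so the standard device of spatially truncating a profile and controlling its kinetic energy requires commutator/almost-orthogonality estimates for $|D|^{\beta/2}$ rather than the elementary Leibniz splitting available for $\nabla$ in the local case $\beta=2$; ruling out dichotomy and upgrading weak to strong $H^{\beta/2}$ convergence must be carried out with these nonlocal estimates (or by invoking an existing fractional profile decomposition). Everything else—the algebraic identities for $C^*$, the rescaling bookkeeping, and the translation back to the orbit of $Q_\beta$—is routine.
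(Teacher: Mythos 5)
Your proposal is correct and follows essentially the same route as the paper, which does not write out a proof but simply invokes ``the variational properties of the ground state and a standard concentration compactness argument'' with citations to \cite{KMR1,M1,MR1}; your outline (near-optimality of the Gagliardo--Nirenberg ratio forced by \eqref{31}, the identity $C^*=(\beta+1)/\|Q_\beta\|_{L^2}^{2\beta}$, and compactness of maximizing sequences plus uniqueness of the optimizer from \cite{FL,FLS}) is exactly that standard argument adapted to the fractional setting. The only step you leave schematic, the fractional concentration-compactness/profile decomposition, is likewise left to the references in the paper, so the level of rigor matches.
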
 
\begin{proof}
The proof is based on the variational properties of the ground state and a standard concentration compactness argument. We refer to \cite[Lemma 9]{KMR1}, \cite[Lemma 1]{M1}, \cite[Lemma 1]{MR1} and the references therein for detailed proof.
\end{proof}

We now turn to the Cauchy problem \eqref{CP}. Let $u(t)\in\mathcal{C}([0,T),H^{\frac{\beta}{2}})$ be a solution of \eqref{CP} with maximal lifespan $T>0$. Suppose the initial data $u_0\in\mathcal{B}_{\alpha_0}$ with $E(u_0)<0$ for some small enough $\alpha_0>0$. Then from the mass conservation law \eqref{11} and energy conservation law \eqref{12}, we have for all $0\leq t<T$, there exists $\lambda_1(t)>0$, $x_1(t)\in \mathbb{R}$, $\gamma_1(t)\in\mathbb{R}$ such that
\begin{equation}\label{33}
\Big\|Q_{\beta}(\cdot)-e^{-i\gamma_1(t)}[\lambda_1(t)]^{1/2}u\big(t,\lambda_1(t)\cdot+x_1(t)\big)\Big\|_{H^{\frac{\beta}{2}}}\leq\delta(\alpha_0),
\end{equation}
provided that $\alpha_0>0$ is small enough.

Now we can introduce the \emph{geometrical decomposition} for solutions to \eqref{CP} with negative energy and slightly supercritical mass.

\begin{proposition}[Geometrical decomposition]\label{P2}
Let $u(t)$ be a solution to \eqref{CP} satisfying the conditions mentioned in Theorem \ref{MT1}. Then there exist five $C^1$ functions on $[0,T)$: $\lambda(t)$, $x(t)$, $b(t)$, $v(t)$, $\gamma(t)$ such that the following holds true:
\begin{enumerate}
\item Geometrical decomposition: for all $t\in[0,T)$
\begin{equation}
\label{GD}
u(t,x)=\frac{1}{\lambda^{\frac{1}{2}}(t)}\big[W_{b(t),v(t)}+\varepsilon(t)\big]\bigg(\frac{x-x(t)}{\lambda(t)}\bigg)e^{i\gamma(t)},
\end{equation}
where $W_{b,v}$ is the nonlinear blow-up profile%
\footnote{Here we use the usual notation of complex valued functions instead of the vector form introduced in Section \ref{S1}.}
 constructed in Proposition \ref{P1}. 
\item Orthogonality conditions: for all $t\in[0,T)$
\begin{gather}
(\varepsilon_1,\Lambda \Theta)-(\varepsilon_2, \Lambda \Sigma)=0,\label{34}\\
(\varepsilon_1,\partial_b \Theta)-(\varepsilon_2,\partial_{b}\Sigma)=0,\label{35}\\
(\varepsilon_1,\partial_v \Theta)-(\varepsilon_2,\partial_{v}\Sigma)=0,\label{36}\\
(\varepsilon_1,\nabla \Theta)-(\varepsilon_2,\nabla\Sigma)=0,\label{37}\\
(\varepsilon_1,\Lambda^2 \Theta)-(\varepsilon_2, \Lambda^2 \Sigma)=0.\label{38}
\end{gather}
where
\begin{equation}
\label{39}
W_{b,v}=\Sigma+i\Theta,\quad \varepsilon=\varepsilon_1+i\varepsilon_2.
\end{equation} 
\item A priori estimates on the parameters: for all $t\in[0,T)$
\begin{equation}
\label{310}
|b(t)|+|v(t)|+\|\varepsilon(t)\|_{H^{\frac{\beta}{2}}}\leq \delta(\alpha_0).
\end{equation}
\end{enumerate}
\end{proposition}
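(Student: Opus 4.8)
The plan is to produce \eqref{GD}, the orthogonality conditions \eqref{34}--\eqref{38} and the bound \eqref{310} by a quantitative implicit function theorem centered at the ground state, as in the modulation theory of \cite{MR1,MR2,MR3}. Given $\tilde u$ close to $Q_\beta$ in $H^{\frac{\beta}{2}}$ and a parameter vector $p=(\lambda,x,b,v,\gamma)$, I set
$$\varepsilon_p(y)=\lambda^{\frac12}e^{-i\gamma}\tilde u(\lambda y+x)-W_{b,v}(y),\qquad \varepsilon_p=\varepsilon_{p,1}+i\varepsilon_{p,2},$$
and define the $\mathbb{R}^5$-valued map $\rho(\tilde u,p)$ whose five components are the left-hand sides of \eqref{34}--\eqref{38}, with $W_{b,v}=\Sigma+i\Theta$. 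The natural base point is $(\tilde u,p)=(Q_\beta,p_0)$, $p_0=(1,0,0,0,0)$: there $W_{0,0}=Q_\beta$ forces $\varepsilon_{p_0}=0$, hence $\rho(Q_\beta,p_0)=0$. The goal is to solve $\rho=0$ for $p$ as a function of $\tilde u$.

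The heart of the matter, and the main obstacle, is the invertibility of the Jacobian $\partial_p\rho(Q_\beta,p_0)$. Because $\varepsilon_{p_0}=0$, every contribution in which a $p$-derivative lands on a test function $\Lambda\Theta,\partial_b\Theta,\dots$ is multiplied by $\varepsilon=0$ and drops out, so only the terms where the derivative hits $\varepsilon_p$ remain. Using $\Sigma|_{p_0}=Q_\beta$, $\Theta|_{p_0}=0$, the structure $\partial_bW_{b,v}|_{0,0}=iS_1$, $\partial_vW_{b,v}|_{0,0}=iG_1$ from Remark \ref{25}, and the elementary computations
$$\partial_\lambda\varepsilon_p|_{p_0}=\Lambda Q_\beta,\quad \partial_x\varepsilon_p|_{p_0}=\nabla Q_\beta,\quad \partial_\gamma\varepsilon_p|_{p_0}=-iQ_\beta,\quad \partial_b\varepsilon_p|_{p_0}=-iS_1,\quad \partial_v\varepsilon_p|_{p_0}=-iG_1,$$
I assemble the Jacobian. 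The parity rules of Remark \ref{25} ($Q_\beta,S_1,\Lambda Q_\beta,\Lambda^2Q_\beta$ even; $\nabla Q_\beta,G_1$ odd), the $L^2$-criticality identity $(\Lambda Q_\beta,Q_\beta)=0$, and the anti-self-adjointness $\Lambda^{*}=-\Lambda$ annihilate all but a triangular skeleton whose diagonal blocks are
$$(S_1,\Lambda Q_\beta)=\big((L_-^\beta)^{-1}\Lambda Q_\beta,\Lambda Q_\beta\big)>0,\qquad (G_1,\nabla Q_\beta)=-(L_-^\beta G_1,G_1)<0,\qquad (Q_\beta,\Lambda^2Q_\beta)=-\|\Lambda Q_\beta\|_{L^2}^2<0.$$
The first sign is the delicate one: it follows from $L_-^\beta\geq0$ with $\ker L_-^\beta=\mathrm{span}\{Q_\beta\}$ (Lemma \ref{L1}) together with $\Lambda Q_\beta\perp Q_\beta$ and $\Lambda Q_\beta\neq0$, so that $(L_-^\beta)^{-1}$ is positive on $\{Q_\beta\}^{\perp}$; the other two are immediate. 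After reordering rows and columns the Jacobian is triangular with determinant $(S_1,\Lambda Q_\beta)^2\,(G_1,\nabla Q_\beta)^2\,(Q_\beta,\Lambda^2Q_\beta)\neq0$, so it is invertible.

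With this nondegeneracy I apply the implicit function theorem. The map $\rho$ is $C^1$ jointly in $(\tilde u,p)$, being affine in $\tilde u$ through the pairings and at least $C^5$ in $(b,v)$ since $W_{b,v}$ is (Proposition \ref{P1}); hence there is a $C^1$ map $p=\Upsilon(\tilde u)$ on an $H^{\frac{\beta}{2}}$-neighborhood of $Q_\beta$ with $\Upsilon(Q_\beta)=p_0$ and $\rho(\tilde u,\Upsilon(\tilde u))=0$. Applying this to $\tilde u=e^{-i\gamma_1(t)}\lambda_1(t)^{1/2}u(t,\lambda_1(t)\cdot+x_1(t))$, which by \eqref{33} is $\delta(\alpha_0)$-close to $Q_\beta$, and composing the correction with the scaling, translation and phase symmetries produces $(\lambda(t),x(t),b(t),v(t),\gamma(t))$; the decomposition \eqref{GD} is then simply the definition of $\varepsilon$. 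Continuity of $\Upsilon$ together with \eqref{33} forces $p(t)$ near $p_0$, whence $|b|+|v|\leq\delta(\alpha_0)$, and since $\varepsilon=\tilde u-W_{b,v}$ with both terms $\delta(\alpha_0)$-close to $Q_\beta$ we obtain \eqref{310}.

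Finally, for the $C^1$ regularity in $t$ I note that each component of $\rho$ pairs $\varepsilon_p$ against fixed Schwartz functions (the profile derivatives), so $\Upsilon$ is in fact $C^1$ on a neighborhood of $Q_\beta$ in the rougher space $H^{-\frac{\beta}{2}}$, where the same Jacobian computation and the same invertibility apply. Since the equation \eqref{CP} gives $u\in C^1([0,T);H^{-\frac{\beta}{2}})$, the composition $p(t)=\Upsilon(\tilde u(t))$ is $C^1$ in time, completing the construction.
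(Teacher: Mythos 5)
Your proposal is correct and follows essentially the same route as the paper's Appendix B: the same map $\varepsilon_p$ and functionals $\rho$, the same Jacobian computation at $(Q_\beta,p_0)$ with the identical triangular structure and nonvanishing diagonal entries $(S_1,\Lambda Q_\beta)$, $(G_1,\nabla Q_\beta)$, $(Q_\beta,\Lambda^2Q_\beta)$, followed by the implicit function theorem composed with the rough decomposition \eqref{33}. Your closing remark on obtaining $C^1$ time regularity by running the implicit function theorem in $H^{-\beta/2}$ is a welcome addition that the paper leaves implicit.
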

\begin{proof}
The proof of Proposition \ref{P2} follows from a standard argument of implicit function theory. We refer to  \cite[Appendix C]{KLR} and \cite[Lemma 2]{MR1} for detailed proof.
\end{proof}
 
\subsection{Modulation estimates} 
With the geometrical decomposition obtained in Proposition \ref{P2}, we are now able to derive some crucial estimates for the parameters $(\lambda(t)$, $b(t)$, $x(t)),v(t),\gamma(t))$. 

We first introduce the following scaling invariant coordinate:
\begin{equation}
\label{311}
s=\int_0^t\frac{1}{\lambda^{\beta}(\tau)}\,\textrm{d}\tau,\quad y=\frac{x-x(t)}{\lambda(t)}.
\end{equation}
\begin{remark}
We mention here that the maximal lifespan of the solution in the rescaled setting is always $+\infty$:
$$s(T)=+\infty.$$
This is a consequence of the scaling structure of the equation. If the solution does not blow up in finite time, then it is obviously that $s(T)=+\infty$, since the negative energy condition removes the possibility that $\lambda(s)\rightarrow+\infty$. If  the solution blows up in finite time $T<+\infty$, then the scaling structure ensures that $\lambda(t)\lesssim (T-t)^{1/\beta}$, which also implies that $s(T)=+\infty$.
\end{remark}

Under this new coordinate, we have the following \textit{a priori} estimates for the parameters $(\lambda,b,v,x,\gamma)$ and the error term $\varepsilon$:
\begin{proposition}
\label{P3}
For all $s\in[0,+\infty)$, the following estimates hold true:
\begin{enumerate}
\item Equation of $\varepsilon$:
\begin{align}
&b_s(\partial_b\Sigma)+(v_s+bv)\partial_v\Sigma+\partial_s\varepsilon_1-M_-(\varepsilon)+b\Lambda\varepsilon_1-v\cdot\nabla\varepsilon_1\nonumber\\
&=\bigg(\frac{\lambda_s}{\lambda}+b\bigg)(\Lambda\Sigma+\Lambda\varepsilon_1)+\bigg(\frac{x_s}{\lambda}-v\bigg)\cdot(\nabla\Sigma+\nabla\varepsilon_1)\nonumber\\
&\quad+\tilde{\gamma}_s(\Theta+\varepsilon_2)+\Im(\Psi_{b,v})-R_2(\varepsilon),\label{312}\\
&b_s(\partial_b\Theta)+(v_s+bv)\partial_v\Theta+\partial_s\varepsilon_2+M_+(\varepsilon)+b\Lambda\varepsilon_2-v\cdot\nabla\varepsilon_2\nonumber\\
&=\bigg(\frac{\lambda_s}{\lambda}+b\bigg)(\Lambda\Theta+\Lambda\varepsilon_2)+\bigg(\frac{x_s}{\lambda}-v\bigg)\cdot(\nabla\Theta+\nabla\varepsilon_2)\nonumber\\
&\quad-\tilde{\gamma}_s(\Sigma+\varepsilon_1)-\Re(\Psi_{b,v})+R_1(\varepsilon).\label{313}
\end{align}
Here $\tilde{\gamma}(s)=-s-\gamma(s)$ and $M=(M_+,M_-)$ are small perturbation of the linearized operator $L^{\beta}=(L_+^\beta,L_-^\beta)$ given by
\begin{align}
M_+(\varepsilon)=&|D|^{\frac{\beta}{2}}\varepsilon_1+\varepsilon_1-|W_{b,v}|^{2\beta}\varepsilon_1-2\beta|W_{b,v}|^{2(\beta-1)}(\Sigma^2\varepsilon_1+\Sigma\Theta\varepsilon_2),\label{314}\\
M_-(\varepsilon)=&|D|^{\frac{\beta}{2}}\varepsilon_2+\varepsilon_2-|W_{b,v}|^{2\beta}\varepsilon_2-2\beta|W_{b,v}|^{2(\beta-1)}(\Theta^2\varepsilon_2+\Sigma\Theta\varepsilon_1).\label{315}
\end{align}
The nonlinear terms $R_1(\varepsilon)$, $R_2(\varepsilon)$ are given by
\begin{align}
R_1(\varepsilon)=&|W_{b,v}+\varepsilon|^{2\beta}(\Sigma+\varepsilon_1)-\Sigma|W_{b,v}|^{2\beta}\nonumber\\
&-|W_{b,v}|^{2\beta}\varepsilon_1-2\beta|W_{b,v}|^{2(\beta-1)}(\Sigma^2\varepsilon_1+\Sigma\Theta\varepsilon_2),\label{316}\\
R_2(\varepsilon)=&|W_{b,v}+\varepsilon|^{2\beta}(\Theta+\varepsilon_2)-\Theta|W_{b,v}|^{2\beta}\nonumber\\
&-|W_{b,v}|^{2\beta}\varepsilon_2-2\beta|W_{b,v}|^{2(\beta-1)}(\Theta^2\varepsilon_2+\Sigma\Theta\varepsilon_1).\label{317}
\end{align}
\item Estimates induced by the conservation laws:
\begin{align}
\label{318}
&|\lambda^\beta|E_0|-(\varepsilon_1,\Sigma+b\Lambda\Theta-v\nabla\Theta)-(\varepsilon_2,\Theta-b\Lambda\Sigma+v\nabla\Sigma)|\nonumber\\
&\lesssim\bigg(\int\big||D|^{\frac{\beta}{2}}\varepsilon\big|^2+\int|\varepsilon|^2e^{-|y|}\bigg)+|b|^5+v^2.
\end{align}
\item Estimates for the modulation parameters:
\begin{align}
&\bigg|\frac{\lambda_s}{\lambda}+b\bigg|+|v_s+bv|+\bigg|\frac{x_s}{\lambda}-v\bigg|+|b_s|+\bigg|\tilde{\gamma}_s-\frac{1}{\|\Lambda Q_{\beta}\|_{L^2}^2}\big(\varepsilon_1,L^\beta_+(\Lambda^2Q_{\beta})\big)\bigg|\nonumber\\
&\lesssim\delta(\alpha_0)\bigg(\int\big||D|^{\frac{\beta}{2}}\varepsilon\big|^2+\int|\varepsilon|^2e^{-|y|}\bigg)^{\frac{1}{2}}+|b|^5+v^2+\lambda^\beta|E_0|.\label{319}
\end{align}
\end{enumerate}
\end{proposition}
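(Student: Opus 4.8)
The plan is to establish the three assertions in order, each resting on the geometric decomposition \eqref{GD} and the profile equation \eqref{22}.

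\emph{Equations for $\varepsilon$.} I would first pass to the renormalized variables $(s,y)$ of \eqref{311} and write $u=\lambda^{-1/2}w(s,y)e^{i\gamma}$ with $w=W_{b,v}+\varepsilon$. Substituting into \eqref{CP} and using $ds/dt=\lambda^{-\beta}$, the chain rule gives, after dividing out the common factor $\lambda^{-\beta-1/2}e^{i\gamma}$, a closed equation for $w$ of the schematic form $i\partial_s w-i\tfrac{\lambda_s}{\lambda}\Lambda w-i\tfrac{x_s}{\lambda}\nabla w-\gamma_s w-|D|^{\beta}w+|w|^{2\beta}w=0$; here the nonlocal term simply rescales by $L^2$-criticality and $|u|^{2\beta}=\lambda^{-\beta}|w|^{2\beta}$. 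Splitting $w=W_{b,v}+\varepsilon$, I would use \eqref{22} to cancel the $W_{b,v}$-contributions up to $-\Psi_{b,v}$, write $\gamma_s=-1-\tilde\gamma_s$ so that the constant $-1$ is absorbed by the soliton-type terms and only the drift $\tilde\gamma_s$ survives, and reorganise the leftover coefficients as $\big(\tfrac{\lambda_s}{\lambda}+b\big)$, $\big(\tfrac{x_s}{\lambda}-v\big)$, $(v_s+bv)$ and $b_s$. The nonlinearity $|w|^{2\beta}w$ is then expanded around $W_{b,v}$: its part linear in $\varepsilon$ defines the operators $M_{\pm}$ of \eqref{314}--\eqref{315}, and the superlinear remainder is collected into $R_1,R_2$ of \eqref{316}--\eqref{317}. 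Separating real and imaginary parts yields \eqref{312}--\eqref{313}. This step is essentially bookkeeping.

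\emph{Conservation-law estimate.} Here I would use $E(u(t))=E_0$ together with the scaling identity $E(u)=\lambda^{-\beta}E(w)$ (both the kinetic and potential terms carry the factor $\lambda^{-\beta}$), so that $\lambda^{\beta}E_0=E(W_{b,v}+\varepsilon)$. Expanding the energy to first order gives $E(W_{b,v}+\varepsilon)=E(W_{b,v})+\Re\big(|D|^{\beta}W_{b,v}-|W_{b,v}|^{2\beta}W_{b,v},\varepsilon\big)+O(\|\varepsilon\|_{H^{\beta/2}}^2)$. Rewriting the gradient through \eqref{22} as $-W_{b,v}+ib\Lambda W_{b,v}-iv\nabla W_{b,v}-ibv\partial_v W_{b,v}+\Psi_{b,v}$ and taking real and imaginary parts reproduces precisely the inner products $(\varepsilon_1,\Sigma+b\Lambda\Theta-v\nabla\Theta)$ and $(\varepsilon_2,\Theta-b\Lambda\Sigma+v\nabla\Sigma)$ appearing in \eqref{318}. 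Everything else is moved to the right-hand side and estimated: $E(W_{b,v})$ by the energy degeneracy \eqref{240}, the $\Psi_{b,v}$-pairing by \eqref{23}, and the quadratic remainder together with the $bv$-term by the coercive norm $\int\big||D|^{\beta/2}\varepsilon\big|^2+\int|\varepsilon|^2e^{-|y|}$; this gives \eqref{318}.

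\emph{Modulation estimates (the main obstacle).} Finally I would differentiate each of the five orthogonality conditions \eqref{34}--\eqref{38} in $s$ and substitute $\partial_s\varepsilon_1,\partial_s\varepsilon_2$ from \eqref{312}--\eqref{313}. Since $W_{b,v}$ itself depends on $b,v$, the $s$-derivatives of $\Lambda\Theta,\partial_b\Theta,\dots$ generate terms proportional to $b_s$ and $v_s$, so one does not obtain the parameters in isolation but rather a $5\times5$ linear system for $\big(\tfrac{\lambda_s}{\lambda}+b,\;v_s+bv,\;\tfrac{x_s}{\lambda}-v,\;b_s,\;\tilde\gamma_s\big)$. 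Its coefficient matrix is, to leading order in $b,v$ and $\alpha_0$, the Gram-type matrix of the modulation directions evaluated at $Q_\beta$; the orthogonality conditions are chosen exactly so that this matrix is nearly diagonal and non-degenerate, the decisive non-degeneracy being $\|\Lambda Q_{\beta}\|_{L^2}^2\neq0$, which is reflected both in the normalising factor of the $\tilde\gamma_s$ term in \eqref{319} and in the relation \eqref{117}. The right-hand side splits into: the $\Psi_{b,v}$-pairings, controlled by $|b|^5+v^2$ via \eqref{23}; the nonlinear $R_1,R_2$-pairings, controlled by $\delta(\alpha_0)\big(\int\big||D|^{\beta/2}\varepsilon\big|^2+\int|\varepsilon|^2e^{-|y|}\big)^{1/2}$ using the smallness \eqref{310}; and the input of the conservation-law relation \eqref{318}, which is where the term $\lambda^{\beta}|E_0|$ enters. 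Inverting the system yields \eqref{319}. I expect this last part to be the crux: the nonlocality of $|D|^{\beta}$ means that all the integration-by-parts and commutator manipulations needed to evaluate the matrix entries (of the kind carried out in \eqref{221}--\eqref{222}) must be justified through the weighted decay estimates of Proposition \ref{P1}, and one must verify that every off-diagonal coupling either vanishes by the parity structure of Remark \ref{25} or is absorbable into the diagonal once $\beta$ is near $2$ and $\alpha_0$ is small.
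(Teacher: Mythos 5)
Your outline follows the paper's proof essentially step for step: part (1) is the same direct computation, part (2) is the same expansion of the energy conservation law closed with \eqref{240}, \eqref{23} and the Hardy-type estimate \eqref{327}, and part (3) is the same $5\times5$ linear system obtained by differentiating \eqref{34}--\eqref{38}, whose matrix $A$ reduces at $(b,v,\varepsilon)=(0,0,0)$ to a nearly triangular Jacobian with nonzero diagonal entries $(S_1,L_-^\beta S_1)$, $(G_1,L_-^\beta G_1)$, $\|\Lambda Q_\beta\|_{L^2}^2$, so $\det A\sim1$.

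Two points need tightening. First, in part (2) the intermediate bound $O(\|\varepsilon\|_{H^{\beta/2}}^2)$ for the quadratic remainder is too crude as written: the unweighted $\int|\varepsilon|^2$ is \emph{not} controlled by the right-hand side of \eqref{318}; one must use that every quadratic term in the potential-energy expansion carries a decaying weight $|W_{b,v}|^{2\beta-2}$, so that it is bounded by $\int|\varepsilon|^2(1+|y|)^{-\beta}$ and hence, via \eqref{327}, by the coercive norm (you do land on the right bound in the next clause, but the mechanism should be the weight, not the $H^{\beta/2}$ norm). Second, and more importantly, your enumeration of the right-hand sides $B_i$ of the linear system omits their dominant contributions, the linear pairings $(M_+(\varepsilon),\cdot)+(M_-(\varepsilon),\cdot)$, which are \emph{not} small a priori. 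The paper handles them by self-adjointness plus the identities $L_-^\beta S_1=\Lambda Q_\beta$, $L_-^\beta G_1=-\nabla Q_\beta$, $L_+^\beta\Lambda Q_\beta=-\beta Q_\beta$ built into Proposition \ref{P1}: the $\partial_b$, $\partial_v$ and $\nabla$ directions then collapse onto quantities killed (up to $\delta(\alpha_0)$-corrections) by the orthogonality conditions and the kernel of $L_+^\beta$; the $\Lambda$ direction produces $-\beta(\varepsilon_1,Q_\beta)$, which is exactly where \eqref{318} and hence $\lambda^\beta|E_0|$ enter; and the $\Lambda^2$ direction leaves the uncontrollable term $(\varepsilon_1,L_+^\beta\Lambda^2Q_\beta)$, which is why \eqref{319} carries the explicit correction $\|\Lambda Q_\beta\|_{L^2}^{-2}(\varepsilon_1,L_+^\beta\Lambda^2Q_\beta)$ to $\tilde\gamma_s$ rather than an absolute bound. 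Without this step the claimed bounds on the $B_i$ do not follow, so it should be spelled out rather than subsumed under "bookkeeping".
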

\begin{proof}
{\it Proof of (1):} The equations of \eqref{312} and \eqref{313} follows from direct computation.

{\it Proof of (2):} For the estimate \eqref{318}, we expand the energy conservation law as the following:
\begin{align}
\label{320}
&2(\varepsilon_1,\Sigma+b\Lambda\Theta-v\nabla\Theta)+2(\varepsilon_2,\Theta-b\Lambda\Sigma+v\nabla\Sigma)\nonumber\\
=&2\big(\varepsilon_1,bv\partial_v\Theta+\Re(\Psi_{b,v})\big)-2\big(\varepsilon_2,bv\partial_v\Sigma-\Im(\Psi_{b,v})\big)-2\lambda^\beta E_0+2E(W_{b,v})\nonumber\\
&-\frac{1}{\beta+1}\int F(\varepsilon)+\int\big||D|^{\frac{\beta}{2}}\varepsilon\big|^2-\int\big(|W_{b,v}|^{2\beta}+2\beta\Sigma^2|W_{b,v}|^{2\beta-2}\big)\varepsilon^2_1\nonumber\\
&-\int\big(|W_{b,v}|^{2\beta}+2\beta\Theta^2|W_{b,v}|^{2\beta-2}\big)\varepsilon^2_2-4\beta\int\Sigma\Theta|W_{b,v}|^{2\beta-2}\varepsilon_1\varepsilon_2,
\end{align}
where 
\begin{align}
\label{321}
F(\varepsilon)=&|W_{b,v}+\varepsilon|^{2\beta+2}-|W_{b,v}|^{2\beta+2}-(2\beta+2)|W_{b,v}|^{2\beta}(\Sigma\varepsilon_1+\Theta\varepsilon_2)\nonumber\\
&-(\beta+1)\big(|W_{b,v}|^{2\beta}+2\beta\Sigma^2|W_{b,v}|^{2\beta-2}\big)\varepsilon^2_1\nonumber\\
&-(\beta+1)\big(|W_{b,v}|^{2\beta}+2\beta\Theta^2|W_{b,v}|^{2\beta-2}\big)\varepsilon^2_2\nonumber\\
&-4\beta(\beta+1)\Sigma\Theta|W_{b,v}|^{2\beta-2}\varepsilon_1\varepsilon_2.
\end{align}
Applying the following estimate%
\footnote{We use the fact that $W_{b,v}(y)\not=0$ if $b$, $v$ are small enough.}%
\begin{align}\label{322}
&\Big||1+z|^{2+2\beta}-1-(2+\beta)\Re z-(\beta+1)(2\beta+1)(\Re z)^2-(\beta+1)(\Im z)^2 \Big|\nonumber\\
&\leq C(|z|^3+|z|^{2+2\beta}),\quad \forall z\in\mathbb{C}
\end{align}
to $z=\varepsilon/W_{b,v}$, we have%
\footnote{Here we use the Gagliardo-Nirenberg's inequality:
$$\|\varepsilon\|^p_{L^{p}}\leq C\big\||D|^{\frac{\beta}{2}}\varepsilon\big\|_{L^2}^{\frac{p-2}{\beta}}\|\varepsilon\|_{L^2}^{\frac{p(\beta-1)+2}{\beta}},\quad\forall p\geq2$$
and the decay estimate \eqref{24} for the inequality \eqref{323}.}%
\begin{align}
\label{323}
&\int|F(\varepsilon)|\lesssim \int|\varepsilon|^{2\beta+2}+\int|\varepsilon|^3|W_{b,v}|^{2\beta-1}\nonumber\\
&\lesssim\|\varepsilon\|_{L^2}^{2\beta}\bigg(\int\big||D|^{\frac{\beta}{2}}\varepsilon\big|^2\bigg)+\bigg(\int|\varepsilon|^{(\beta+3)}\bigg)^{\frac{1}{1+\beta}}\bigg(\int|\varepsilon|^2|W_{b,v}|^{\frac{(\beta+1)(2\beta-1)}{\beta}}\bigg)^{\frac{\beta}{\beta+1}}\nonumber\\
&\lesssim\|\varepsilon\|_{H^{\frac{\beta}{2}}}\bigg(\int\big||D|^{\frac{\beta}{2}}\varepsilon\big|^2+\int\frac{|\varepsilon|^2}{(1+|y|)^{\beta}}\bigg),
\end{align}

Similarly, we have 
\begin{align}
\label{324}
&\Bigg|\int\big(|W_{b,v}|^{2\beta}+2\beta\Sigma^2|W_{b,v}|^{2\beta-2}\big)\varepsilon^2_1+\int\big(|W_{b,v}|^{2\beta}+2\beta\Theta^2|W_{b,v}|^{2\beta-2}\big)\varepsilon^2_2\nonumber\\
&-4\beta\int\Sigma\Theta|W_{b,v}|^{2\beta-2}\varepsilon_1\varepsilon_2\Bigg|\lesssim \int\frac{|\varepsilon|^2}{(1+|y|)^{\beta}}
\end{align}
and
\begin{align}
\label{325}
&\big|2\big(\varepsilon_1,bv\partial_v\Theta+\Re(\Psi_{b,v})\big)-2\big(\varepsilon_2,bv\partial_v\Sigma-\Im(\Psi_{b,v})\big)\big|\nonumber\\
&\lesssim|b|^5+v^2+\int\frac{|\varepsilon|^2}{(1+|y|)^{\beta}}.
\end{align}

Injecting \eqref{323}--\eqref{349} into \eqref{321}, using \eqref{240}, the \textit{a priori} smallness estimate \eqref{310} and the energy condition $E_0<0$, we obtain
\begin{align}
\label{326}
&|\lambda^\beta|E_0|-(\varepsilon_1,\Sigma+b\Lambda\Theta-v\nabla\Theta)-(\varepsilon_2,\Theta-b\Lambda\Sigma+v\nabla\Sigma)|\nonumber\\
&\lesssim\bigg(\int\big||D|^{\frac{\beta}{2}}\varepsilon\big|^2+\int\frac{|\varepsilon|^2}{(1+|y|)^{\beta}}\bigg)+|b|^5+v^2,
\end{align}
provided that $\alpha_0$ is small enough. 

Now, it is easy to see that the estimate \eqref{318} follows from the following Hardy's type estimate:
\begin{lemma}[Hardy's type estimate]\label{L2}
Suppose $1\leq\beta <2$, then there exists a universal constant $C$ independent of $\beta$ such that for all $f\in H^{\frac{\beta}{2}}(\mathbb{R})$, we have
\begin{equation}\label{327}
\int\frac{|f|^2}{(1+|y|)^\beta}\leq C\bigg(\int\big||D|^{\frac{\beta}{2}}f\big|^2+\int|f|^2e^{-|y|}\bigg).
\end{equation} 
\end{lemma}
\begin{remark}
The proof of \eqref{327} follows from the fractional Hardy's inequality%
\footnote{See \cite{BD} for more details.}
 in dimension $1$  as well as a localization argument. We leave the detailed proof in Appendix \ref{A4}.
\end{remark}

{\it Proof of (3):} Now, we turn to the proof of \eqref{319}. We first differentiate \eqref{34} to obtain
\begin{align}
\label{328}
&(\partial_s\varepsilon_1,\Lambda\Theta)-(\partial_s\varepsilon_2,\Lambda\Sigma)+b_s\Big[\big(\varepsilon_1,\Lambda(\partial_b\Theta)\big)-\big(\varepsilon_2,\Lambda(\partial_b\Sigma)\big)\Big]\nonumber\\
&=-v_s\Big[\big(\varepsilon_1,\Lambda(\partial_v\Theta)\big)-\big(\varepsilon_2,\Lambda(\partial_v\Sigma)\big)\Big].
\end{align}
Then we project \eqref{312} and \eqref{313} onto $-\Lambda\Theta$ and $\Lambda\Sigma$ using \eqref{328} to obtain
\begin{align}\label{329}
&b_s\Big[\big(\partial_b \Theta,\Lambda \Sigma\big)-\big(\partial_b\Sigma,\Lambda\Theta\big)-\big(\varepsilon_2,\Lambda(\partial_b\Sigma)\big)+\big(\varepsilon_1,\Lambda(\partial_b\Theta)\big)\Big]\nonumber\\
&+(v_s+bv)\Big[\big(\partial_v \Theta,\Lambda \Sigma\big)-\big(\partial_v\Sigma,\Lambda\Theta\big)-\big(\varepsilon_2,\Lambda(\partial_v\Sigma)\big)+\big(\varepsilon_1,\Lambda(\partial_v\Theta)\big)\Big]\nonumber\\
&+\bigg(\frac{x_s}{\lambda}-v\bigg)\Big\{\big(\Theta,(\Lambda\Sigma)_y\big)-\big(\Sigma,(\Lambda\Theta)_y\big)+\big(\varepsilon_2,(\Lambda\Sigma)_y\big)-\big(\varepsilon_1,(\Lambda\Theta)_y\big)\Big\}\nonumber\\
&+\bigg(\frac{\lambda_s}{\lambda}+b\bigg)\big\{-(\Lambda\Theta,\Lambda\Sigma)+(\Lambda\Sigma,\Lambda\Theta)+(\varepsilon_2,\Lambda^2\Sigma)-(\varepsilon_1,\Lambda^2\Theta)\big\}\nonumber\\
&-\tilde{\gamma}_s\big\{(\Sigma,\Lambda\Sigma)+(\Theta,\Lambda\Theta)+(\varepsilon_1,\Lambda\Sigma)+(\varepsilon_2,\Lambda\Theta)\big\}\nonumber\\
&=-bv\Big\{\big(\varepsilon_2,\Lambda(\partial_v\Sigma)\big)-\big(\varepsilon_1,\Lambda(\partial_v\Theta)\big)\Big\}-\big(M_+(\varepsilon),\Lambda\Sigma\big)-\big(M_-(\varepsilon),\Lambda\Theta\big)\nonumber\\
&\quad-b\big\{(\Lambda\varepsilon_2,\Lambda\Sigma)-(\Lambda\varepsilon_1,\Lambda\Theta)\big\}+v\big\{(\nabla\varepsilon_2,\Lambda\Sigma)-(\nabla\varepsilon_1,\Lambda\Theta)\big\}\nonumber\\
&\quad-\big(\Lambda \Sigma,\Re(\Psi_{b,v})\big)-\big(\Lambda\Theta,\Im(\Psi_{b,v})\big)+\big(R_1(\varepsilon),\Lambda\Sigma\big)+(R_{2}(\varepsilon),\Lambda\Theta).
\end{align}
By applying the same argument to \eqref{35}--\eqref{38}, we can obtain the other four equations similar as \eqref{329}. For simplicity, we do not write down the other four equations explicitly. 

Now, we view these five equations as a linear system of $$\bigg(b_s,\frac{\lambda_s}{\lambda}+b,\frac{x_s}{\lambda}-v,v_s+bv,\tilde{\gamma}_s\bigg).$$
Hence, \eqref{329} and the other four similar equations can  be written in the following form
\begin{equation}
\label{330}
\begin{bmatrix}
b_s,\frac{\lambda_s}{\lambda}+b,\frac{x_s}{\lambda}-v,v_s+bv,\tilde{\gamma}_s
\end{bmatrix}A
=\begin{bmatrix}
B_1,B_2,B_3,B_4,B_5
\end{bmatrix},
\end{equation}
where $A=A(b,v,\varepsilon)$ is a $5\times5$ matrix and 
\begin{align}
B_1&=-bv\Big\{\big(\varepsilon_2,\partial_v(\Lambda\Sigma)\big)-\big(\varepsilon_1,\partial_v(\Lambda\Theta)\big)\Big\}-\big(M_+(\varepsilon),\Lambda\Sigma\big)-\big(M_-(\varepsilon),\Lambda\Theta\big)\nonumber\\
&\quad-b\big\{(\Lambda\varepsilon_2,\Lambda\Sigma)-(\Lambda\varepsilon_1,\Lambda\Theta)\big\}+v\big\{(\nabla\varepsilon_2,\Lambda\Sigma)-(\nabla\varepsilon_1,\Lambda\Theta)\big\}\nonumber\\
&\quad-\big(\Lambda \Sigma,\Re(\Psi_{b,v})\big)-\big(\Lambda\Theta,\Im(\Psi_{b,v})\big)+\big(R_1(\varepsilon),\Lambda\Sigma\big)+(R_{2}(\varepsilon),\Lambda\Theta).\label{331}
\end{align}
While for $B_i$, $i=2,3,4,5$, they are defined similarly as $B_1$ with $\Lambda\Sigma$ replaced by $\partial_b\Sigma$, $\partial_v\Sigma$, $\nabla\Sigma$, $\Lambda^2\Sigma$ and $\Lambda \Theta$ replaced by $\partial_b\Theta$, $\partial_v\Theta$, $\nabla\Theta$, $\Lambda^2\Theta$ respectively. 

We claim that
\begin{align}
&|B_1|+|B_2|+|B_3|+|B_4|+\big|B_5-\big(\varepsilon_1,L_+^\beta(\Lambda^2Q_\beta)\big)\big|\nonumber\\
&\lesssim\delta(\alpha_0)\bigg(\int\big||D|^{\frac{\beta}{2}}\varepsilon\big|^2+\int|\varepsilon|^2e^{-|y|}\bigg)^{\frac{1}{2}}+|b|^5+v^2+\lambda^\beta|E_0|.\label{335}
\end{align}
Indeed, from Proposition \ref{P1}, the \textit{a priori} estimate \eqref{310} and the Hardy's type estimate \eqref{327}, we can easily obtain that
\begin{align}
\label{332}
&\big|B_1+\big(M_+(\varepsilon),\Lambda\Sigma\big)+\big(M_-(\varepsilon),\Lambda\Theta\big)-\big(R_1(\varepsilon),\Lambda\Sigma\big)-\big(R_{2}(\varepsilon),\Lambda\Theta\big)\big|\nonumber\\
&\lesssim\delta(\alpha_0)\bigg(\int\big||D|^{\frac{\beta}{2}}\varepsilon\big|^2+\int|\varepsilon|^2e^{-|y|}\bigg)^{\frac{1}{2}}+|b|^5+v^2,
\end{align}
Then we apply the following inequality:
$$\forall z\in\mathbb{C},\quad\big|(1+z)|1+z|^{2\beta}-1-(2\beta+1)\Re z-i\Im z\big|\lesssim(|z|^2+|z|^{2\beta+1})$$
to
 $z=\frac{\varepsilon}{W_{b,v}}$, using similar argument as \eqref{323} to obtain
\begin{align}
\label{337}
&\big|\big(R_1(\varepsilon),\Lambda\Sigma\big)+\big(R_{2}(\varepsilon),\Lambda\Theta\big)\big|\lesssim\int(|\varepsilon|^{2\beta+1}+|\varepsilon|^2)|\Lambda W_{b,v}|\nonumber\\
&\lesssim\int|\varepsilon|^{2\beta+2}+\int\frac{|\varepsilon|^2}{(1+|y|)^\beta}\lesssim\int\big||D|^{\frac{\beta}{2}}\varepsilon\big|^2+\int|\varepsilon|^2e^{-|y|}.
\end{align}
Using the relation $L_+^\beta\Lambda Q_\beta=-\beta Q_\beta$, we also have
\begin{align}
\label{338}
(M_+(\varepsilon),\Lambda\Sigma)+(M_-(\varepsilon),\Lambda\Theta)&=-\beta(\varepsilon_1,Q_\beta)+O\bigg(\delta(\alpha_0)\bigg(\int|\varepsilon|^2e^{-|y|}\bigg)^{\frac{1}{2}}\bigg).
\end{align}
Combining \eqref{318}, \eqref{332}, \eqref{337} and \eqref{338}, we obtain that
\begin{equation}
\label{340}
|B_1|\lesssim \delta(\alpha_0)\bigg(\int\big||D|^{\frac{\beta}{2}}\varepsilon\big|^2+\int|\varepsilon|^2e^{-|y|}\bigg)^{\frac{1}{2}}+|b|^5+v^2+\lambda^\beta|E_0|.
\end{equation}
Similarly, we have
\begin{align}
&|B_2+\big(M_+(\varepsilon),\partial_b\Sigma\big)+\big(M_-(\varepsilon),\partial_b\Theta\big)|+|B_3+\big(M_+(\varepsilon),\partial_v\Sigma\big)+\big(M_-(\varepsilon),\partial_v\Theta\big)|\nonumber\\
&+|B_4+\big(M_+(\varepsilon),\nabla\Sigma\big)+\big(M_-(\varepsilon),\nabla\Theta\big)|+|B_5+\big(M_+(\varepsilon),\Lambda^2\Sigma\big)+\big(M_-(\varepsilon),\Lambda^2\Theta\big)|\nonumber\\
&\lesssim\delta(\alpha_0)\bigg(\int\big||D|^{\frac{\beta}{2}}\varepsilon\big|^2+\int|\varepsilon|^2e^{-|y|}\bigg)^{\frac{1}{2}}+|b|^5+v^2.\label{341}
\end{align}

From the orthogonality condition \eqref{34} and \eqref{37} as well as the translation invariance \eqref{26}, we have:
\begin{align}
&(L_-^\beta\varepsilon_2,S_1)=(\varepsilon_2,\Lambda Q_\beta)=O\bigg(\delta(\alpha_0)\bigg(\int\frac{|\varepsilon|^2}{(1+|y|)^\beta}\bigg)^{\frac{1}{2}}\bigg),\label{348}\\
&(L_-^\beta\varepsilon_2,G_1)=(\varepsilon_2,-\nabla Q_\beta)=O\bigg(\delta(\alpha_0)\bigg(\int\frac{|\varepsilon|^2}{(1+|y|)^\beta}\bigg)^{\frac{1}{2}}\bigg),\label{345}\\
&(L_+^\beta\varepsilon_1,\nabla Q_\beta)=0\label{349}.
\end{align}
Using the same argument as \eqref{338}, we obtain the estimates for $B_i$, $i=2,3,4,5$, together with \eqref{340}, we conclude the proof of \eqref{335}.

Finally, for the matrix $A$, by direct computation, we have
\begin{align}
\label{333}
&A(0,0,0)=\nonumber\\
&\begin{bmatrix}
(-S_1,\Lambda Q_\beta)&0&0&0&(\Lambda^2Q_\beta,S_1)\\
0&(-S_1,\Lambda Q_\beta)&0&0&0\\
0&0&(G_1,\nabla Q_\beta)&0&0\\
0&0&0&(G_1,\nabla Q_\beta)&0\\
0&0&0&0&(Q_{\beta},\Lambda^2Q_\beta)
\end{bmatrix}.
\end{align}
Since $A(b,v,\varepsilon)=A(0,0,0)+O(|b|,|v|,\|\varepsilon\|_{H^{\frac{\beta}{2}}})$, 
Combining with \eqref{333}, we have
\begin{equation}
\label{334}
\det{A}\sim 1.
\end{equation}
Injecting \eqref{333} and \eqref{334} into \eqref{330}, using \eqref{335}, we obtain \eqref{319} immediately.
\end{proof}

\section{The local virial argument}\label{S4}
This section is devoted to exhibit the dispersion structure of the Cauchy problem \eqref{CP} by using the virial identity. 

Similar argument has been introduced by Merle and Rapha\"el \cite{MR3,MR2,MR1,MR5,MR4} for the local case when $\beta=2$, where they exhibit a local dispersive relation which can be roughly written as the following:
\begin{equation}
\label{41}
(\varepsilon_2,Q)_s\geq \bar{H}(\varepsilon,\varepsilon)+\lambda^2|E_0|-Ce^{-\frac{C}{|(\varepsilon_2,Q)|}}.
\end{equation}
Here $\bar{H}(\varepsilon,\varepsilon)$ is some $H^1$ quadratic form of $\varepsilon$, which is positive except on a three dimensional vector space. However, all these three negative direction can be controlled by the conservation laws and  modulation theory up to an exponentially small correction in $(\varepsilon_2,Q)$. Hence, the estimate \eqref{41} implies:
\begin{equation}
\label{42}
(\varepsilon_2,Q)_s\geq \delta_0\bigg(\int|\varepsilon_y|^2+\int|\varepsilon|^2e^{-|y|}\bigg)-Ce^{-\frac{C}{|(\varepsilon_2,Q)|}},
\end{equation}
which will lead to the log-log blow-up dynamics by a standard ODE argument on the parameter..

While for \eqref{CP} in the nonlocal case when $\beta<2$ the following virial identity
\begin{equation}
\label{43}
\frac{d}{dt}\bigg(\Im\int x\cdot \nabla u(t)\bar{u}(t)\bigg)=2E(u_0)
\end{equation}
still holds true, we can generalize \eqref{42} to the fractional case.

\subsection{The virial estimate in $\varepsilon$ variable}In this section, we will derive the virial identity in $\varepsilon$ variable. More precisely, we have the following local virial estimate:

\begin{proposition}[Local virial estimate]\label{P4}
There exists $\beta_0<2$ such that if $\beta_0<\beta<2$ and $0<\alpha_0\leq \alpha^*(\beta)\ll1$,  then there exist universal constants $C>0$, $\mu_0>0$ such that for all $s\in[0,+\infty)$, there holds%
\footnote{We mention here that the parameter $b$ plays a similar role as $(\varepsilon_2,Q)$ in \eqref{42}, since it is governed by the orthogonality condition $(\varepsilon_1,\Theta)-(\varepsilon_2,\Sigma)=0.$}
\begin{equation}
\label{422}
b_s\geq \mu_0\bigg(\lambda^\beta|E_0|+v^2+\int\big||D|^{\frac{\beta}{2}}\varepsilon\big|^2+\int|\varepsilon|^2e^{-|y|}\bigg)-Cb^{10}.
\end{equation}
Moreover, for all $s_1<s_2$, there holds
\begin{equation}
\label{423}
\int_{s_1}^{s_2}\bigg(\lambda^\beta|E_0|+v^2+\int\big||D|^{\frac{\beta}{2}}\varepsilon\big|^2+\int|\varepsilon|^2e^{-|y|}\bigg)\lesssim \delta(\alpha_0)+\int_{s_1}^{s_2}b^{10}.
\end{equation}
\end{proposition}

\begin{remark}
Here the error term $|b|^{10}$ comes from the estimate \eqref{23} for the self-similar equation, which will determine the upper bound on the blow-up rate introduced in Theorem \ref{MT1}.
\end{remark}

\begin{proof}
We divide the proof into several steps:
\subsubsection*{\bf Step 1:} Virial identity in $\varepsilon$ variable:

We first claim that for all $\mu_0>0$, there exists $C_{\mu_0}>0$ and $\alpha^*=\alpha^*(\mu_0)$, such that if $\alpha_1<\alpha^*$, there holds
\begin{align}\label{452}
&\bigg|b_s\Big[\big(\partial_b \Theta,\Lambda \Sigma\big)-\big(\partial_b\Sigma,\Lambda\Theta\big)\Big]+v\Big\{\big(\Theta,(\Lambda\Sigma)_y\big)-\big(\Sigma,(\Lambda\Theta)_y\big)\Big\}-\widetilde{H}^\beta(\varepsilon,\varepsilon)+\beta\lambda^\beta E_0\bigg|\nonumber\\
&\leq\mu_1\bigg(\lambda^\beta|E_0|+v^2+\int\big||D|^{\frac{\beta}{2}}\varepsilon\big|^2+\int|\varepsilon|^2e^{-|y|}\bigg)+C_{\mu_1}|b|^{10},
\end{align}
where 
\begin{equation}\label{481}
\widetilde{H}^\beta(\varepsilon,\varepsilon)=\frac{\beta}{2}\big[(\mathcal{L}_1^\beta\varepsilon_1,\varepsilon_1)+(\mathcal{L}_2^\beta\varepsilon_2,\varepsilon_2)\big]-\frac{1}{\|\Lambda Q_\beta\|^2_{L^2}}(\varepsilon_1,L_+^\beta(\Lambda^2Q_\beta))(\varepsilon_1,\Lambda Q_\beta)
\end{equation}
with
$\mathcal{L}_1^{\beta}=|D|^{\beta}+2(2\beta+1)yQ_{\beta}'Q_{\beta}^{2\beta-1}$, $ \mathcal{L}_2^{\beta}=|D|^{\beta}+2 yQ_{\beta}'Q_{\beta}^{2\beta-1}$,
for all $\varepsilon=\varepsilon_1+i\varepsilon_2\in H^{\frac{\beta}{2}}$.

Now, we turn to the proof of \eqref{452}. We calculate every tern in \eqref{329} as follows.

First, from the construction of $W_{b,v}$, we have 
\begin{align}\label{425}
\pmb{W}_{b,v}=&\begin{bmatrix}
\mathrm{even}\\
0
\end{bmatrix}
+b\begin{bmatrix}
0\\
\mathrm{even}
\end{bmatrix}
+b^2\begin{bmatrix}
\mathrm{even}\\
0
\end{bmatrix}
+b^3\begin{bmatrix}
0\\
\mathrm{even}
\end{bmatrix}
+b^4\begin{bmatrix}
\mathrm{even}\\
0
\end{bmatrix}\nonumber\\
&+v\begin{bmatrix}
0\\
\mathrm{odd}
\end{bmatrix}
+v^2\begin{bmatrix}
\mathrm{even}\\
0
\end{bmatrix}
+bv\begin{bmatrix}
\mathrm{odd}\\
0
\end{bmatrix}
+vb^2\begin{bmatrix}
0\\
\mathrm{odd}
\end{bmatrix},
\end{align}
which implies that
\begin{equation}\label{426}
\big|\big(\Theta,(\Lambda\Sigma)_y\big)-\big(\Sigma,(\Lambda\Theta)_y\big)\big|+\big| \big(\partial_v \Theta,\Lambda \Sigma\big)-\big(\partial_v\Sigma,\Lambda\Theta\big)\big|=O(|v|).
\end{equation}
Hence using the modulation estimate \eqref{319}, we have
\begin{align}
\label{427}
&\Big|(v_s+bv)\Big[\big(\partial_v \Theta,\Lambda \Sigma\big)-\big(\partial_v\Sigma,\Lambda\Theta\big)-\big(\varepsilon_2,\Lambda(\partial_v\Sigma)\big)+\big(\varepsilon_1,\Lambda(\partial_v\Theta)\big)\Big]\Big|\nonumber\\
&+\bigg|\bigg(\frac{x_s}{\lambda}-v\bigg)\Big\{\big(\Theta,(\Lambda\Sigma)_y\big)-\big(\Sigma,(\Lambda\Theta)_y\big)+\big(\varepsilon_2,(\Lambda\Sigma)_y\big)-\big(\varepsilon_1,(\Lambda\Theta)_y\big)\Big\}\bigg|\nonumber\\
&+\bigg|\tilde{\gamma}_s\big\{(\varepsilon_1,\Lambda\Sigma)+(\varepsilon_2,\Lambda\Theta)\big\}-\frac{1}{\|\Lambda Q_\beta\|^2_{L^2}}(\varepsilon_1,L_+^\beta\Lambda^2Q_\beta)(\varepsilon_1,\Lambda Q_\beta)\bigg|\nonumber\\
&\lesssim \delta(\alpha_0)\bigg(\lambda^\beta|E_0|+v^2+\int\big||D|^{\frac{\beta}{2}}\varepsilon\big|^2+\int|\varepsilon|^2e^{-|y|}\bigg)+b^{10}
\end{align}
From the orthogonality condition \eqref{38}, we know that
\begin{equation}
\label{424}
\bigg(\frac{\lambda_s}{\lambda}+b\bigg)\big\{(\varepsilon_2,\Lambda^2\Sigma)-(\varepsilon_1,\Lambda^2\Theta)\big\}=0.
\end{equation}

Then,  we use \eqref{246}, the commutator relation $[\nabla,\Lambda]=\nabla$ and the orthogonality condition \eqref{37}  to compute:
\begin{align}
\label{415}
&\big(M_+(\varepsilon),\Lambda\Sigma\big)+\big(M_-(\varepsilon),\Lambda\Theta\big)\nonumber\\
&=\big(\varepsilon_1,|D|(\Lambda\Sigma)+\Lambda\Sigma-|W_{b,v}|^{2\beta}\Lambda\Sigma-2\beta \Sigma|W_{b,v}|^{(2\beta-2)}(\Sigma\Lambda\Sigma+\Theta\Lambda\Theta)\big)\nonumber\\
&\quad+\big(\varepsilon_2,|D|(\Lambda\Theta)+\Lambda\Theta-|W_{b,v}|^{2\beta}\Lambda\Theta-2\beta \Theta|W_{b,v}|^{(2\beta-2)}(\Sigma\Lambda\Sigma+\Theta\Lambda\Theta)\big)\nonumber\\
&=-\beta\Big[\big(\varepsilon_1,\Sigma-b\Lambda\Theta+v\nabla\Theta-\Re(\Psi_{b,v})\big)+(\varepsilon_2,\Theta+b\Lambda\Sigma-v\nabla\Sigma-\Im(\Psi_{b,v}))\Big]\nonumber\\
&\quad+b\big[(\varepsilon_2,\Lambda^2\Sigma)-(\varepsilon_1,\Lambda^2\Theta)\big]-v\big[(\varepsilon_2,\nabla\Lambda\Sigma)-(\varepsilon_1,\nabla\Lambda\Theta)\big]\nonumber\\
&\quad-bv\big[(\varepsilon_2,\Lambda(\partial_v\Sigma))-(\varepsilon_1,\Lambda(\partial_v\Theta))\big]+\big(\varepsilon_1,\Re(\Lambda\Psi_{b,v})\big)+\big(\varepsilon_2,\Im(\Lambda\Psi_{b,v})\big).
\end{align}
Next,  from \eqref{241}, we have
\begin{align}\label{421}
\beta E(W_{b,v})=&(\Re(\Psi_{b,v}),\Lambda\Sigma)+(\Im(\Psi_{b,v}),\Lambda\Theta)-v\Big\{\big(\Theta,(\Lambda\Sigma)_y\big)-\big(\Sigma, (\Lambda\Theta)_y\big)\Big\}\nonumber\\
&-bv\Big\{\big(\Theta,\Lambda(\partial_v\Sigma)\big)-\big(\Sigma,\Lambda(\partial_v\Theta)\big)\Big\}.
\end{align}

Combining \eqref{320}, \eqref{321}, \eqref{427}, \eqref{424}, \eqref{415} and \eqref{421},  we obtain 
\begin{align}
\label{416}
&b_s\Big[\big(\partial_b \Theta,\Lambda \Sigma\big)-\big(\partial_b\Sigma,\Lambda\Theta\big)-\big(\varepsilon_2,\Lambda(\partial_b\Sigma)\big)+\big(\varepsilon_1,\Lambda(\partial_b\Theta)\big)\Big]\nonumber\\
&=-v\Big\{\big(\Theta,(\Lambda\Sigma)_y\big)-\big(\Sigma,(\Lambda\Theta)_y\big)\Big\}-bv\Big\{\big(\Theta,\Lambda(\partial_v\Sigma)\big)-\big(\Sigma,\Lambda(\partial_v\Theta)\big)\Big\}\nonumber\\
&\quad+\beta\lambda^\beta|E_0|-\big(\varepsilon_1,\Re(\Lambda \Psi_{b,v})\big)-\big(\varepsilon_1,\Im(\Lambda \Psi_{b,v})\big)+G(\varepsilon)\nonumber\\
&\quad+O\Bigg(\delta(\alpha_0)\bigg(\lambda^\beta|E_0|+v^2+\int\big||D|^{\frac{\beta}{2}}\varepsilon\big|^2+\int|\varepsilon|^2e^{-|y|}\bigg)+b^{10}\Bigg),
\end{align}
where 
\begin{align}
\label{417}
G(\varepsilon)=&-\frac{\beta}{2+2\beta}\int \bigg(|W_{b,v}+\varepsilon|^{2\beta+2}-|W_{b,v}|^{2\beta+2}-(2\beta+2)|W_{b,v}|^{2\beta}(\Sigma\varepsilon_1+\Theta\varepsilon_2)\bigg)\nonumber\\
&+\frac{\beta}{2}\int\big||D|^{\frac{\beta}{2}}\varepsilon\big|^2+(R_1(\varepsilon),\Lambda\Sigma)+(R_2(\varepsilon),\Lambda\Theta).
\end{align}
By tracking all quadratic term of $\varepsilon$ in $G(\varepsilon)$, using the argument for \eqref{323}, we can easily show that
\begin{align}\label{418}
G(\varepsilon)=\widetilde{H}^\beta(\varepsilon,\varepsilon)+O\Bigg(\delta(\alpha_0)\bigg(\int\big||D|^{\frac{\beta}{2}}\varepsilon\big|^2+\int|\varepsilon|^2e^{-|y|}\bigg)\Bigg).
\end{align}

Finally, from \eqref{425}, we have
\begin{equation}\label{442}
\Big| bv\Big\{\big(\Theta,\Lambda(\partial_v\Sigma)\big)-\big(\Sigma,\Lambda(\partial_v\Theta)\big)\Big\}\Big|\lesssim (|v|+|b|)v^2\lesssim\delta(\alpha_0) v^2.
\end{equation}
From \eqref{319}, we have
\begin{align}\label{453}
&\Big|b_s\big(\varepsilon_2,\Lambda(\partial_b\Sigma)\big)+\big(\varepsilon_1,\Lambda(\partial_b\Theta)\big)\Big|\nonumber\\
&\lesssim \delta(\alpha_0)\bigg(\lambda^\beta|E_0|+v^2+\int\big||D|^{\frac{\beta}{2}}\varepsilon\big|^2+\int|\varepsilon|^2e^{-|y|}\bigg)+b^{10}.
\end{align}
From \eqref{23}, we have:
\begin{align}\label{438}
&\big|\big(\varepsilon_1,\Re(\Lambda \Psi_{b,v})\big)+\big(\varepsilon_2,\Im(\Lambda \Psi_{b,v})\big)\big|\lesssim (b^{5}+(|v|+|b|)v^2)\bigg(\int\big||D|^{\frac{\beta}{2}}\varepsilon\big|^2+\int|\varepsilon|^2e^{-|y|}\bigg)^{\frac{1}{2}}\nonumber\\
&\leq \frac{\mu_1}{1000}\bigg(\int\big||D|^{\frac{\beta}{2}}\varepsilon\big|^2+\int|\varepsilon|^2e^{-|y|}\bigg)+\frac{\mu_1}{1000}v^2+C_{\mu_1}b^{10},
\end{align}
Injecting \eqref{418} --\eqref{438} into \eqref{416}, we obtain \eqref{452} immediately.

\subsubsection*{\bf Step 2:} Spectral property:
Now, we will show that the quadratic form $\widetilde{H}^\beta(\varepsilon,\varepsilon)$ is positive except on a dimension four subspace provided that $\beta$ is close to $2$. We first denote by
$$H^\beta(\varepsilon,\varepsilon)=(\mathcal{L}^\beta_1\varepsilon_1,\varepsilon_1)+(\mathcal{L}_2^\beta\varepsilon_2,\varepsilon_2)$$
Then, we introduce the following \emph{spectral property} for $1\leq\beta\leq2$:

\begin{definition}
We say that \emph{spectral property} holds true for $1\leq\beta\leq2$, if there exists a universal constant $\delta>0$ such that 
\begin{equation}\label{SP}
H^{\beta}(\varepsilon,\varepsilon)\geq \delta \bigg(\int\big||D|^{\frac{\beta}{2}}\varepsilon\big|^2+\int|\varepsilon|^2e^{-|y|}\bigg),
\end{equation}
for all $\varepsilon=\varepsilon_1+i\varepsilon_2\in H^{\frac{\beta}{2}}(\mathbb{R})$ with $(\varepsilon_1,Q_{\beta})=(\varepsilon_1,G_1)=(\varepsilon_2, \Lambda Q_{\beta})=(\varepsilon_2, \Lambda^2 Q_{\beta})=0$.
\end{definition}

We mention here that for general $\beta\in[1,2)$ it is still not known whether the spectral property holds true. But for the local case when $\beta=2$, the spectral property has been proved by Merle and Rapha\"el \cite{MR1} with the help of some numeric tools. As mentioned before the ground state $Q_\beta$ is continuous with respect to $\beta$ up to $\beta=2$. More precisely, we have
\begin{lemma}[Continuity of $Q_\beta$ with respect to $\beta$]\label{L7}
We denote by $Q=Q_2$ for the ground state in the local case when $\beta=2$, and 
$$L_+=-\Delta+1-5Q^4,\quad L_-=-\Delta+1-Q^4$$
the linearized operator at $Q$. Then we have
\begin{enumerate}
\item Continuity of $Q_\beta$:
\begin{equation}
Q_{\beta}\rightarrow Q,\;in\;H^1,\quad as \;\;\beta\rightarrow 2^-.
\end{equation}
\item Uniform boundedness from the above: there exist a constant $C$ independent of $\beta$ such that
\begin{equation}
|Q_\beta(y)|\leq \frac{C}{(1+|y|)^{1+\beta}},\quad |Q(y)|\leq Ce^{-\frac{|y|}{2}}.
\end{equation}
\item Convergence of $L^\beta_{\pm}$ in the norm-resolvent sense:
\begin{equation}
\bigg\|\frac{1}{L^\beta_{\pm}+z}-\frac{1}{L_{\pm}+z}\bigg\|_{L^2\rightarrow L^2}\rightarrow 0,
\end{equation}
as $\beta\rightarrow 2^-$ for all $z\in\mathbb{C}$ with $\Im z\not=0$.
\end{enumerate}
\end{lemma}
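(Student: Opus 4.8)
The plan is to treat the three items separately, with essentially all of the work concentrated in (3). Items (1) and (2) are recorded (at least implicitly) in Frank--Lenzmann \cite{FL} and Frank--Lenzmann--Silvestre \cite{FLS}, so I would simply invoke them. The $H^1$ convergence $Q_\beta\to Q$ in (1) is exactly \eqref{115}: it follows from the variational characterization of $Q_\beta$ as the (unique, up to symmetry) optimizer of the Gagliardo--Nirenberg inequality \eqref{121}, the convergence of the optimal constants as $\beta\to 2$, and a concentration--compactness argument identifying the limiting optimizer with $Q$. For (2), the uniform decay $|Q_\beta(y)|\le C\langle y\rangle^{-(1+\beta)}$ with $C$ independent of $\beta$ near $2$ is the quantitative estimate of \cite{FLS}, and $|Q(y)|\le Ce^{-|y|/2}$ is immediate from the explicit formula $Q(x)=(3/\cosh^2(2x))^{1/4}$. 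I take (1) and (2) as given and focus on (3).

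For the norm--resolvent convergence I would split each operator into a kinetic and a potential part, writing $L^\beta_{\pm}=A_\beta+V^\beta_\pm$ and $L_\pm=A+V_\pm$, where $A_\beta=|D|^\beta+1$, $A=|D|^2+1=-\Delta+1$, and $V^\beta_+=-(2\beta+1)Q_\beta^{2\beta}$, $V_+=-5Q^4$ (and similarly for the $-$ case). Since the potentials are real and bounded, all four operators are self-adjoint on $L^2(\mathbb{R};\mathbb{C})$ and their resolvents exist for $\Im z\neq0$. The first step is to prove norm convergence of the \emph{free} resolvents $R_\beta:=(A_\beta+z)^{-1}\to R:=(A+z)^{-1}$. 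By Plancherel this reduces to an $L^\infty$ bound on the Fourier multiplier
$$m_\beta(\xi)=\frac{|\xi|^2-|\xi|^\beta}{(|\xi|^\beta+1+z)(|\xi|^2+1+z)}.$$
Since $\Im z\neq0$, both denominator factors are bounded below by $|\Im z|$, so on any ball $\{|\xi|\le R\}$ one has $|m_\beta(\xi)|\le|\Im z|^{-2}\sup_{0\le r\le R}|r^2-r^\beta|\to0$ as $\beta\to2$; on the complementary region $\{|\xi|>R\}$ one estimates $|m_\beta(\xi)|\lesssim|\xi|^{-\beta}\le R^{-1}$ uniformly in $\beta\in[1,2]$. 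Choosing $R$ large and then $\beta$ close to $2$ gives $\|m_\beta\|_{L^\infty}\to0$, hence $\|R_\beta-R\|_{L^2\to L^2}\to0$ for every $z$ with $\Im z\neq0$.

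The second step is to restore the potentials. Combining (1) with the one--dimensional embedding $H^1(\mathbb{R})\hookrightarrow L^\infty(\mathbb{R})$ yields $Q_\beta\to Q$ uniformly and $\sup_\beta\|Q_\beta\|_{L^\infty}<\infty$; together with the uniform decay of (2) this upgrades to $\|V^\beta_\pm-V_\pm\|_{L^\infty}\to0$, the only delicate point being the varying exponent $Q_\beta^{2\beta}\to Q^4$, which is controlled on compact sets by uniform convergence and at infinity by the tail bound of (2) via $Q^{2\beta}=Q^4\cdot Q^{2\beta-4}\to0$. Thus $V^\beta_\pm\to V_\pm$ in operator norm. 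Using the factorization $L^\beta_\pm+z=(I+V^\beta_\pm R_\beta)(A_\beta+z)$, that is $(L^\beta_\pm+z)^{-1}=R_\beta(I+V^\beta_\pm R_\beta)^{-1}$, and the analogous identity for $L_\pm$, I would note that $\|V^\beta_\pm R_\beta-V_\pm R\|\le\|V^\beta_\pm\|\,\|R_\beta-R\|+\|V^\beta_\pm-V_\pm\|\,\|R\|\to0$; since $I+V_\pm R$ is invertible (because $L_\pm+z$ and $A+z$ are), a standard perturbation argument gives $(I+V^\beta_\pm R_\beta)^{-1}\to(I+V_\pm R)^{-1}$ in norm, and hence $(L^\beta_\pm+z)^{-1}\to(L_\pm+z)^{-1}$ in $L^2\to L^2$ for every $z$ with $\Im z\neq0$, which is the assertion.

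The main obstacle is the kinetic difference $A-A_\beta=|D|^2-|D|^\beta$, which is an unbounded operator, so one cannot feed it directly into the second resolvent identity. The Fourier--multiplier computation of the first step is precisely what circumvents this: the unbounded numerator $|\xi|^2-|\xi|^\beta$ is dominated by the faster growth of the denominator at high frequencies, while at low frequencies the numerator is itself uniformly small as $\beta\to2$. A secondary technical point, handled in the second step, is the non-smooth dependence of the nonlinearity on $\beta$ through the exponent $2\beta$; this is where the uniform decay bound (2) is genuinely needed, to control the potential tails where $Q_\beta^{2\beta-4}$ would otherwise be singular.
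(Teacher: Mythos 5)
Your proposal is correct. Note, however, that the paper does not actually prove Lemma~\ref{L7}: it is stated without proof and attributed to Frank--Lenzmann \cite{FL} and Frank--Lenzmann--Silvestre \cite{FLS}, so there is no in-paper argument to compare against; your contribution is to supply a self-contained proof of item (3) on top of the cited items (1)--(2). The argument you give is the standard one and it goes through: the Fourier-multiplier bound on $m_\beta$ correctly handles the unbounded kinetic difference $|D|^2-|D|^\beta$ (low frequencies by uniform convergence of $r^\beta\to r^2$ on compacts with the denominators bounded below by $|\Im z|$, high frequencies by the extra $|\xi|^{-\beta}$ decay), and the factorization $(L^\beta_\pm+z)^{-1}=R_\beta(I+V^\beta_\pm R_\beta)^{-1}$ together with $\|V^\beta_\pm-V_\pm\|_{L^\infty}\to0$ and the invertibility of $I+V_\pm R$ closes the perturbation argument. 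Two small remarks: first, the decomposition $Q^{2\beta}=Q^4\cdot Q^{2\beta-4}$ in the tail is awkward since $Q^{2\beta-4}$ blows up where $Q$ is small; the clean statement is simply that $Q_\beta^{2\beta}$ and $Q^4$ are each uniformly small on $\{|y|>R\}$ (which already follows from uniform convergence $Q_\beta\to Q$ in $L^\infty$ plus the decay of $Q$, so the quantitative bound (2) is not strictly needed there). Second, it is worth saying explicitly that $I+V^\beta_\pm R_\beta=(L^\beta_\pm+z)(A_\beta+z)^{-1}$ is itself invertible for each $\beta$ (by self-adjointness of $L^\beta_\pm$ and $\Im z\neq0$), so the resolvent factorization is legitimate before one passes to the limit.
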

Using this property and a perturbation argument, we have:
\begin{proposition}\label{P6}
There exists $\beta_0<2$ such that if $\beta_0<\beta<2$, then the spectral property \eqref{SP} holds true.
\end{proposition}
\begin{proof}
We leave the proof of Proposition \ref{P6} in Appendix \ref{A2}.
\end{proof}

Now, we turn to the estimate of the quadratic form $\widetilde{H}^\beta$.

By assuming the spectral property \eqref{SP} holds true, we claim that there exist universal constants $\delta_0>0$ and $C>0$ such that
\begin{align}\label{433}
\widetilde{H}^\beta(\varepsilon,\varepsilon)\geq \frac{\delta_0}{2}\bigg(\int\big||D|^{\frac{\beta}{2}}\varepsilon\big|^2+\int|\varepsilon|^2e^{-|y|}\bigg)-\delta(\alpha_0)v^2-C\big(b^{10}+(\lambda^\beta|E_0|)^2\big),
\end{align}

To prove \eqref{433}, we first introduce the following elliptic estimate:
\begin{lemma}\label{L5}
Suppose that the spectral property \eqref{SP} holds true for some $\delta>0$. Then there exists some constant $\delta_0\in(0,\delta)$ depending only on $\delta$, such that for all $\tilde{\varepsilon}_1\in H^{\frac{\beta}{2}}(\mathbb{R};\mathbb{R})$,
\begin{align}\label{431}
&\widetilde{H}^\beta(\tilde{\varepsilon}_1,\tilde{\varepsilon}_1)\geq \delta_0\bigg(\int\big||D|^{\frac{\beta}{2}}\tilde{\varepsilon}_1\big|^2+\int|\tilde{\varepsilon}_1|^2e^{-|y|}\bigg)\nonumber\\
&\qquad-\frac{1}{\delta_0} \big[(\tilde{\varepsilon}_1,Q_\beta)^2+(\tilde{\varepsilon}_1,G_1)^2+(\tilde{\varepsilon}_1,S_1)^2\big],
\end{align}
where
$$\widetilde{H}^\beta(\tilde{\varepsilon}_1,\tilde{\varepsilon}_1)=\frac{\beta}{2}(\mathcal{L}_1\tilde{\varepsilon}_1,\tilde{\varepsilon}_1)-\frac{1}{\|\Lambda Q_\beta\|^2_{L^2}}(\tilde{\varepsilon}_1,L_+^\beta\Lambda^2Q_\beta)(\tilde{\varepsilon}_1,\Lambda Q_\beta)$$
and $\delta>0$ is the constant on the right hand side of \eqref{SP}.
\end{lemma}
\begin{proof}[Proof of Lemma \ref{L5}]. We denote by 
\begin{align*}
\widetilde{B}(f,g)=&\frac{\beta}{2}\big[(\mathcal{L}_1f,g)\big]-\frac{1}{2\|\Lambda Q_\beta\|^2_{L^2}}(f,L_+^\beta\Lambda^2Q_\beta)(g,\Lambda Q_\beta)\\
&-\frac{1}{2\|\Lambda Q_\beta\|^2_{L^2}}(g,L_+^\beta\Lambda^2Q_\beta)(f,\Lambda Q_\beta)
\end{align*}
the bilinear form underlying $\widetilde{H}^\beta$ for all $f,g\in H^{\frac{\beta}{2}}(\mathbb{R};\mathbb{R})$.

We claim that for all $\tilde{\varepsilon}_1\in H^{\frac{\beta}{2}}(\mathbb{R};\mathbb{R})$, there holds
\begin{equation}
\label{432}
\widetilde{B}(\tilde{\varepsilon}_1,\Lambda Q_\beta)=0.
\end{equation}
Indeed, by direct computation, we have
\begin{equation}\label{449}
\mathcal{L}_1(\Lambda Q_\beta)=\frac{1}{\beta}\big[L_+^\beta(\Lambda^2Q_\beta)-\Lambda(L_+^\beta\Lambda Q_\beta)\big]=\frac{1}{\beta}L_+^\beta(\Lambda^2Q_\beta)+\Lambda Q_\beta.
\end{equation}
which implies that \eqref{432}.

Next, we let $\tilde{\varepsilon}_1\in H^{\frac{\beta}{2}}(\mathbb{R};\mathbb{R})$ such that $(\tilde{\varepsilon}_1,G_1)=(\tilde{\varepsilon}_1,S_1)=0$. We also set $\hat{\varepsilon}_1=\tilde{\varepsilon}_1+\nu \Lambda Q_\beta$ with $\nu=-(\tilde{\varepsilon}_1,\Lambda Q_\beta)/\|\Lambda Q_\beta\|^2_{L^2}$, so that $(\hat{\varepsilon}_1,G_1)=(\hat{\varepsilon}_1,\Lambda Q_\beta)=0$.

From the Hardy's type estimate \eqref{327}, we have
$$\nu^2=\frac{(\tilde{\varepsilon}_1,\Lambda Q_\beta)^2}{\|\Lambda Q_\beta\|^2_{L^2}}\lesssim \int\frac{|\tilde{\varepsilon}_1|^2}{(1+|y|)^{\beta}}\lesssim \int\big||D|^{\frac{\beta}{2}}\tilde{\varepsilon}_1\big|^2+\int|\tilde{\varepsilon}_1|^2e^{-|y|},$$
which implies that 
\begin{align*}
&\frac{1}{C}\bigg(\int\big||D|^{\frac{\beta}{2}}\tilde{\varepsilon}_1\big|^2+\int|\tilde{\varepsilon}_1|^2e^{-|y|}\bigg)\leq \int\big||D|^{\frac{\beta}{2}}\hat{\varepsilon}_1\big|^2+\int|\hat{\varepsilon}_1|^2e^{-|y|}\\
&\qquad\lesssim C\bigg(\int\big||D|^{\frac{\beta}{2}}\tilde{\varepsilon}_1\big|^2+\int|\tilde{\varepsilon}_1|^2e^{-|y|}\bigg),
\end{align*}
for some universal constant $C>0$.

Then from \eqref{432} and the fact that $(\hat{\varepsilon}_1,\Lambda Q_\beta)=0$, we have
$$\widetilde{H}^\beta(\tilde{\varepsilon}_1,\tilde{\varepsilon}_1)=\widetilde{H}^\beta(\hat{\varepsilon}_1-\nu\Lambda Q_\beta,\hat{\varepsilon}_1-\nu\Lambda Q_\beta)=\widetilde{H}^\beta(\hat{\varepsilon}_1,\hat{\varepsilon}_1)=\frac{\beta}{2}(\mathcal{L}_1\hat{\varepsilon}_1,\hat{\varepsilon}_1).$$

Finally, from the fact that $(Q_\beta,\Lambda Q_\beta)=0$, $(\tilde{\varepsilon}_1,Q_\beta)=(\hat{\varepsilon}_1,Q_\beta)$ and the spectral property \eqref{SP}, we know that there exists some constant $\delta>\delta_0>0$ such that%
\footnote{Here we use the fact that $\beta\geq1$ for the first inequality.}
\begin{align*}
\widetilde{H}^\beta(\tilde{\varepsilon}_1,\tilde{\varepsilon}_1)&=\frac{\beta}{2}(\mathcal{L}_1\hat{\varepsilon}_1,\hat{\varepsilon}_1)\geq \frac{\delta}{2}\bigg(\int\big||D|^{\frac{\beta}{2}}\hat{\varepsilon}_1\big|^2+\int|\hat{\varepsilon}_1|^2e^{-|y|}\bigg)-C_\delta(\hat{\varepsilon}_1,Q_\beta)^2\\
&\geq \delta_0\bigg(\int\big||D|^{\frac{\beta}{2}}\tilde{\varepsilon}_1\big|^2+\int|\tilde{\varepsilon}_1|^2e^{-|y|}\bigg)-\frac{1}{\delta_0}(\tilde{\varepsilon}_1,Q_\beta)^2,
\end{align*}
which implies \eqref{431} immediately. Hence we conclude the proof of Lemma \ref{L5}.
\end{proof}

Now we turn back to the proof of \eqref{433}. From \eqref{431} and the spectral property \eqref{SP}, we know that 
\begin{align}\label{434}
&\frac{\beta}{2}\big[(\mathcal{L}_1\varepsilon_1,\varepsilon_1)+(\mathcal{L}_2\varepsilon_2,\varepsilon_2)\big]-\frac{1}{\|\Lambda Q_\beta\|^2_{L^2}}(\varepsilon_1,L_+^\beta(\Lambda^2Q_\beta))(\varepsilon_1,\Lambda Q_\beta)\nonumber\\
&\geq \delta_0\bigg(\int\big||D|^{\frac{\beta}{2}}\varepsilon\big|^2+\int|\varepsilon|^2e^{-|y|}\bigg)-\frac{1}{\delta_0}\big[(\varepsilon_1,Q_\beta)^2+(\varepsilon_1,S_1)^2+(\varepsilon_1,G_1)^2\nonumber\\
&\qquad+(\varepsilon_2,\Lambda Q_\beta)^2+(\varepsilon_2,\Lambda^2Q_\beta)^2\big].
\end{align}
From the orthogonality condition \eqref{34}--\eqref{38} and the Hardy's type estimate \eqref{327}, we know that%
\footnote{Recall that $S_1$ and $G_1$ are defined in \eqref{215} and \eqref{217}.}
\begin{align}\label{435}
&(\varepsilon_1,S_1)^2+(\varepsilon_1,G_1)^2+(\varepsilon_2,\Lambda Q_\beta)^2+(\varepsilon_2,\Lambda^2Q_\beta)^2\nonumber\\
&\lesssim\delta(\alpha_0)\bigg(\int\big||D|^{\frac{\beta}{2}}\varepsilon\big|^2+\int|\varepsilon|^2e^{-|y|}\bigg).
\end{align}
From \eqref{318}, we can easily obtain
\begin{align}\label{436}
(\varepsilon_1,Q_\beta)^2\lesssim \delta(\alpha_0)\bigg(v^2+\int\big||D|^{\frac{\beta}{2}}\varepsilon\big|^2+\int|\varepsilon|^2e^{-|y|}\bigg)+b^{10}+(\lambda^\beta|E_0|)^2.
\end{align}

Combining \eqref{434}--\eqref{436}, we obtain \eqref{433} immediately.

\subsubsection*{\bf Step 3:} End of the proof:

Recalling from \eqref{425}, we have
\begin{align}
\label{440}
&v\Big\{\big(\Theta,(\Lambda\Sigma)_y\big)-\big(\Sigma,(\Lambda\Theta)_y\big)\Big\}\nonumber\\
&=v^2\big((G_1,\nabla\Lambda Q_\beta)-(G_1,\Lambda\nabla Q_\beta)\big)+O(|b|+|v|)v^2.
\end{align}
From \eqref{27} and the commutator relation $[\nabla,\Lambda]=\nabla$, we have
$$\big((G_1,\nabla\Lambda\Sigma)-(G_1,\Lambda\nabla\Sigma)\big)=(G_1,[\nabla,\Lambda]Q_\beta)=(G_1,\nabla Q_{\beta})=-(L_-^\beta\nabla Q_{\beta},\nabla Q_\beta)<0.$$
Hence, we have
\begin{equation}\label{441}
-v\Big\{\big(\Theta,(\Lambda\Sigma)_y\big)-\big(\Sigma,(\Lambda\Theta)_y\big)\Big\}\geq \frac{c_0}{2}v^2,
\end{equation}
for $c_0= (L_-^\beta\nabla Q_{\beta},\nabla Q_\beta)>0$, provided that $\alpha_0$ is small enough.

Injecting \eqref{433} and \eqref{440} into \eqref{452}, and choosing $\mu_1$ small enough, we have
\begin{align}\label{443}
&b_s\Big[\big(\partial_b \Theta,\Lambda \Sigma\big)-\big(\partial_b\Sigma,\Lambda\Theta\big)\Big]\geq \frac{\delta_0}{2}\bigg(\int\big||D|^{\frac{\beta}{2}}\varepsilon\big|^2+\int|\varepsilon|^2e^{-|y|}\bigg)+\frac{c_0}{2}v^2+\beta\lambda^\beta|E_0|\nonumber\\
&\quad-\frac{\delta_0}{50}\bigg(\int\big||D|^{\frac{\beta}{2}}\varepsilon\big|^2+\int|\varepsilon|^2e^{-|y|}\bigg)-\frac{c_0}{50}v^2-C\big( b^{10}+(\lambda^\beta|E_0|)^2\big)\nonumber\\
&\geq \delta_1\bigg(\lambda^\beta|E_0|+v^2+\int\big||D|^{\frac{\beta}{2}}\varepsilon\big|^2+\int|\varepsilon|^2e^{-|y|}\bigg)-Cb^{10},
\end{align}
for some universal constant $\delta_1>0$, provided that $\alpha_0$ is small. 

Now, we observe from $W_{b,v}|_{b=0,v=0}=Q_\beta$ and $\partial_b W_{b,v}|_{b=0,v=0}=iS_1$ that
\begin{align}\label{444}
&\big(\partial_b \Theta,\Lambda \Sigma\big)-\big(\partial_b\Sigma,\Lambda\Theta\big)-\big(\varepsilon_2,\Lambda(\partial_b\Sigma)\big)+\big(\varepsilon_1,\Lambda(\partial_b\Theta)\big)\nonumber\\
&=(S_1,\Lambda Q_\beta)+O\big(|b|+|v|+\|\varepsilon\|_{H^{\frac{\beta}{2}}}\big)=(L^\beta_-\Lambda Q_\beta,\Lambda Q_\beta)+O(\delta(\alpha_0))>0,
\end{align}
where we use \eqref{27} and \eqref{310} for the last inequality. Injecting \eqref{444} into \eqref{443}, we obtain \eqref{422} immediately. Finally, integrating \eqref{422} from $s_1$ to $s_2$ we obtain \eqref{423}, which concludes the proof of Proposition \ref{P4}.

\end{proof}

\section{Proof of Theorem \ref{MT1}}
In this section, we will finish the proof of Theorem \ref{MT1}. We consider initial data $u_0\in\mathcal{B}_{\alpha_0}$ with $E_0=E(u_0)<0$. Assume that $\alpha_0$ is small enough so that all the estimates obtained in the previous sections hold true. Then we can prove Theorem \ref{MT1} in the following steps.

\subsection{Monotony properties}
In this subsection, we will derive from the local virial estimate \eqref{422} some monotony properties of the geometrical parameters. Roughly speaking, we will show that the parameter $b$ is always positive for large time. Since the scaling parameter $\lambda$ satisfies roughly
$$\frac{\lambda_s}{\lambda}\sim -b,$$
the sign structure of $b$ implies that $\lambda$ is almost monotony in time. This fact removes the possibility that the $H^{\frac{\beta}{2}}$ norm of the original solution $u(t)$ oscillates in time, which forces the solution to blow up in finite time. 

More precisely, we have
\begin{proposition}
\label{P5}
Assume that $\alpha_0$ is small enough and the spectral property \eqref{SP} holds true. Then there exists a unique $s_0\in[0,+\infty)$ such that
\begin{enumerate}
\item Sign structure of $b$:
\begin{equation}
\label{51}
\forall s<s_0,\; b(s)<0;\quad b(s_0)=0;\quad\forall s>s_0,\; b(s)>0.
\end{equation}
\item Monotonicity of $\lambda$: for all $s_2>s_1\geq s_0$, we have
\begin{equation}\label{52}
\frac{1}{2}\int_{s_1}^{s_2}b(s)\,ds-\delta(\alpha_0)\leq -\log\bigg(\frac{\lambda(s_2)}{\lambda(s_1)}\bigg)\leq \frac{3}{2}\int_{s_1}^{s_2}b(s)\,ds+\delta(\alpha_0),
\end{equation}
and 
\begin{equation}
\label{53}
\lambda(s_1)>\frac{1}{2}\lambda(s_2).
\end{equation}
\end{enumerate}
\end{proposition}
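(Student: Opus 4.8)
The plan is to read off both parts of Proposition~\ref{P5} from the local viriel estimate \eqref{422}, the time-averaged bound \eqref{423}, and the modulation estimate \eqref{319}, treating $b$ as the master parameter. Throughout I write $N(s):=\int\big||D|^{\frac{\beta}{2}}\varepsilon\big|^2+\int|\varepsilon|^2e^{-|y|}$ for the error energy and recall from \eqref{310} that $|b|,|v|,\|\varepsilon\|_{H^{\frac{\beta}{2}}}$ are all $\le\delta(\alpha_0)$, so in particular $b^{10}\le\delta(\alpha_0)^9|b|$. I begin with two elementary consequences of \eqref{422}. First, since \eqref{422} yields $b_s\ge -Cb^{10}$, a comparison argument shows that once $b$ becomes positive it stays positive: on any interval where $b>0$ the function $b^{-9}$ satisfies $(b^{-9})_s=-9b^{-10}b_s\le 9C$, whence $b(s)\ge\big(b(s_1)^{-9}+9C(s-s_1)\big)^{-1/9}>0$, and continuity forbids $b$ from returning to $0$. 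Second, at any zero of $b$ the term $b^{10}$ vanishes while $\lambda^{\beta}|E_0|>0$ (here the strict negativity $E_0<0$ is essential), so \eqref{422} forces $b_s>0$ at that point; thus every zero of $b$ is a transversal upward crossing.

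These two facts show that $\{s:b(s)>0\}$ is an interval of the form $(s_0,+\infty)$, that $b<0$ on $[0,s_0)$ and $b(s_0)=0$, which is exactly \eqref{51} provided $s_0<\infty$. To see that the crossing occurs at finite time I argue by contradiction: suppose $b\le0$ on $[0,+\infty)$. In this regime $-\lambda_s/\lambda=b-(\lambda_s/\lambda+b)\le|\lambda_s/\lambda+b|$, so after integrating \eqref{319} the scale $\lambda$ is almost non-decreasing and remains bounded below by a fixed positive constant. Consequently $\int_0^S\lambda^{\beta}|E_0|\,ds$ grows at least linearly in $S$, whereas \eqref{423} together with $b^{10}\le\delta(\alpha_0)^9|b|\le\delta(\alpha_0)^{10}$ bounds this integral by $\delta(\alpha_0)+C\delta(\alpha_0)^{10}S$; for $\alpha_0$ small the linear growth beats this, a contradiction. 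Hence $s_0<\infty$, and uniqueness follows from transversality.

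For the monotonicity (2) I would integrate the first term of \eqref{319}, namely $|\lambda_s/\lambda+b|\lesssim\delta(\alpha_0)\sqrt{N(s)}+|b|^5+v^2+\lambda^{\beta}|E_0|$, over $[s_1,s_2]$ with $s_1\ge s_0$, where $b>0$. Writing $-\log(\lambda(s_2)/\lambda(s_1))=\int_{s_1}^{s_2}b-\int_{s_1}^{s_2}(\lambda_s/\lambda+b)$, both inequalities in \eqref{52} follow once one shows
\[
\int_{s_1}^{s_2}\Big|\frac{\lambda_s}{\lambda}+b\Big|\,ds\le\tfrac12\int_{s_1}^{s_2}b\,ds+\delta(\alpha_0).
\]
The lower-order contributions are harmless: $\int|b|^5\le\delta(\alpha_0)^4\int b$, while \eqref{423} and $b^{10}\le\delta(\alpha_0)^9 b$ give $\int(v^2+\lambda^{\beta}|E_0|)\lesssim\delta(\alpha_0)+\delta(\alpha_0)^9\int b$. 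The estimate \eqref{53} is then an immediate corollary: the lower bound in \eqref{52} gives $-\log(\lambda(s_2)/\lambda(s_1))\ge-\delta(\alpha_0)$, hence $\lambda(s_2)<2\lambda(s_1)$ for $\alpha_0$ small.

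The main obstacle is the square-root term $\delta(\alpha_0)\int_{s_1}^{s_2}\sqrt{N(s)}\,ds$, which must be absorbed into $\tfrac14\int b+\delta(\alpha_0)$. The two available handles are the time-averaged smallness $\int N(s)\lesssim\delta(\alpha_0)+\delta(\alpha_0)^9\int b$ coming from \eqref{423}, and the pointwise coercivity $N(s)\le\mu_0^{-1}\big(b_s+Cb^{10}\big)$ read off from \eqref{422}. On the region $\{N\le b^2\}$ the bound is trivial, since there $\delta(\alpha_0)\sqrt{N}\le\delta(\alpha_0)\,b\le\tfrac14 b$. On the complementary region $\{N>b^2\}$ one has $b_s\ge\mu_0 N-Cb^{10}\ge\tfrac{\mu_0}{2}b^2>0$ and $N\lesssim b_s$, so the problem reduces to estimating $\delta(\alpha_0)\int\sqrt{b_s}$; an AM--GM split $\sqrt{b_s}\le\tfrac12(b_s/b+b)$ leaves a $\int b_s/b$ contribution whose control is the delicate point. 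This is precisely where the smallness of $\alpha_0$ and the (almost) monotone behaviour of $b$ for $s\ge s_0$ must be exploited carefully, and I expect it to be the technical heart of the proposition.
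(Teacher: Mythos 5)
Your sign-structure argument for \eqref{51} is essentially the paper's: at any zero of $b$ the local viriel estimate \eqref{422} forces $b_s\geq\mu_0\lambda^{\beta}|E_0|>0$ (this is exactly where $E_0<0$ enters), so every zero is a transversal upward crossing, and your comparison argument for $b^{-9}$ correctly shows positivity persists. But the rest of the proposal has two genuine gaps, both traceable to one missing idea. First, in the finiteness-of-$s_0$ step your contradiction pits the linear growth $\lambda_-^{\beta}|E_0|\,S$ against $C\delta(\alpha_0)^{10}S$; since $|E_0|$ may be arbitrarily small for a fixed $\alpha_0$, ``for $\alpha_0$ small the linear growth beats this'' does not follow. (Moreover the lower bound $\lambda\geq\lambda_->0$ cannot be obtained by integrating \eqref{319}, because $\delta(\alpha_0)\int\sqrt{N}$ is not controlled over an infinite time interval.) Second, and more seriously, you reduce \eqref{52} to absorbing $\delta(\alpha_0)\int\sqrt{N}$ into $\tfrac14\int b+\delta(\alpha_0)$ and then leave that step open; the AM--GM split you propose produces $\int b_s/b=\log\big(b(s_2)/b(s_1)\big)$, which blows up as $s_1\downarrow s_0$ where $b$ vanishes, so that route fails outright.

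The missing ingredient is that one should not use the generic modulation bound \eqref{319} for $\lambda_s/\lambda+b$ at all. The paper re-derives the modulation equation for this single parameter by projecting the $\varepsilon$-equation onto the $\partial_b W_{b,v}$ direction (differentiating the orthogonality condition \eqref{35}, equations \eqref{516}--\eqref{520}). The only term linear in $\varepsilon$ without a small prefactor is $(M_+(\varepsilon),\partial_b\Sigma)+(M_-(\varepsilon),\partial_b\Theta)$, which to leading order equals $(L_-^{\beta}\varepsilon_2,S_1)=(\varepsilon_2,\Lambda Q_\beta)$; by the orthogonality condition \eqref{34} together with $\Lambda\Sigma=\Lambda Q_\beta+O(|b|+|v|)$ and $\Lambda\Theta=O(|b|+|v|)$, this quantity carries a prefactor $O(|b|+|v|)$ and is therefore $O\big(\delta(\alpha_0)|b|+v^2+N\big)$. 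One thus obtains the pointwise bound $|\lambda_s/\lambda+b|\lesssim\lambda^{\beta}|E_0|+v^2+N+\delta(\alpha_0)|b|$ --- linear in $N$, with no square root --- which integrates via \eqref{423} to $\big|\log(\lambda(s_2)/\lambda(s_1))+\int_{s_1}^{s_2}b\big|\leq\delta(\alpha_0)+\tfrac12\int_{s_1}^{s_2}|b|$. This single estimate yields \eqref{52} and \eqref{53} immediately once $b>0$, supplies the lower bound on $\lambda$ in the case $b\leq0$, and (via $\int b^{10}\leq\delta(\alpha_0)^9\int|b|<\infty$ against $\int\lambda^{\beta}|E_0|=\infty$) delivers the contradiction forcing $s_0<\infty$ without any assumption on the size of $|E_0|$.
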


\begin{proof}We proceed the proof in the following steps.
\subsubsection*{\bf Step 1: }Equation of the scaling parameters.

We claim that for all $s_2>s_1\geq0$, there holds
\begin{equation}
\label{515}
\bigg|\log\bigg(\frac{\lambda(s_2)}{\lambda(s_1)}\bigg)+\int_{s_1}^{s_2}b\bigg|\leq \delta(\alpha_0)+\frac{1}{2}\int_{s_1}^{s_2}|b|
\end{equation}
We first differentiate \eqref{35} to obtain
\begin{align}
\label{516}
&(\partial_s\varepsilon_1,\partial_b\Theta)-(\partial_s\varepsilon_2,\partial_b\Sigma)+b_s\big[(\varepsilon_1,\partial^2_b\Theta)-(\varepsilon_2,\partial^2_b\Sigma)\big]\nonumber\\
&=-v_s\Big[\big(\varepsilon_1,\partial_b(\partial_v\Theta)\big)-\big(\varepsilon_2,\partial_b(\partial_v\Sigma)\big)\Big].
\end{align}
Then by projecting \eqref{312} and \eqref{313} onto $-\partial_b\Theta$ and $\partial_b\Sigma$ using  \eqref{23}, \eqref{310}, \eqref{319} and \eqref{328},
we can easily obtain the following rough estimate:
\begin{align}
\label{518}
&\bigg(\frac{\lambda_s}{\lambda}+b\bigg)\big\{(\Lambda\Sigma,\partial_b\Theta)-(\Lambda\Theta,\partial_b\Sigma)\big\}\nonumber\\
&= O\bigg(\lambda^\beta|E_0|+v^2+\int\big||D|^{\frac{\beta}{2}}\varepsilon\big|^2+\int|\varepsilon|^2e^{-|y|}+\delta(\alpha_0)|b|\bigg).
\end{align}
Since $(\Lambda\Sigma,\partial_b\Theta)-(\Lambda\Theta,\partial_b\Sigma)=(\Lambda Q_\beta,S_1)+O(|b|+|v|)\sim1$, we have
\begin{equation}\label{520}
\bigg(\frac{\lambda_s}{\lambda}+b\bigg)=O\bigg(\lambda^\beta|E_0|+v^2+\int\big||D|^{\frac{\beta}{2}}\varepsilon\big|^2+\int|\varepsilon|^2e^{-|y|}+\delta(\alpha_0)|b|\bigg).
\end{equation}
Integrating \eqref{520} from $s_1$ to $s_2$, using \eqref{423}, we obtain \eqref{515} immediately.

\subsubsection*{\bf Step 2: }Proof of \eqref{51}. 

Suppose that \eqref{51} does note hold. Then there are two possibilities:
\begin{enumerate}
\item There exists $s_0\in[0,+\infty)$ such that $b_s(s_0)\leq0$ and $b(s_0)=0$
\item For all $s\in[0,+\infty)$, we have $b(s)<0$.
\end{enumerate}
If the first one holds true, then from \eqref{422}, we have $\lambda^\beta(s_0)|E_0|\leq0$. Since $E_0<0$, we have $\lambda(s_0)=0$, which leads to a contradiction. Hence, we have for all $s\in[0,+\infty)$, $b(s)<0$.

If $\int_{s_1}^{+\infty}b=-\infty$, then from \eqref{515}, we have $\lambda(s_2)\rightarrow+\infty$ as $s_2\rightarrow+\infty$, which contradicts with \eqref{310}, \eqref{318} and the energy condition $E_0<0$. Thus, we have
\begin{equation}
\label{522}
\int_{s_1}^{+\infty}|b|<+\infty.
\end{equation}
Using \eqref{515} again, we can show that there exists $0<\lambda_-<\lambda_+$ such that for all $s\geq s_0$, we have
\begin{equation}
\label{521}
\lambda_-<\lambda(s)<\lambda_+.
\end{equation}
From \eqref{319}, \eqref{423} and \eqref{522}, we have:
$$\int_{s_1}^{\infty}|bb_s|\lesssim\int_{s_1}^{+\infty}\bigg( |b|+\lambda^\beta|E_0|+v^2+\int\big||D|^{\frac{\beta}{2}}\varepsilon\big|^2+\int|\varepsilon|^2e^{-|y|}\bigg)\,ds<+\infty.$$
Using \eqref{522} again, we have $b(s)\rightarrow0$, as $s\rightarrow+\infty$, and $b_s(s_n)\rightarrow 0$, as $n\rightarrow+\infty$, for some sequence $\{s_n\}^\infty_{n=1}$ with $s_n\rightarrow+\infty$, as $n\rightarrow+\infty$.

Combining all the results above and \eqref{422}, we obtain that
$$\lambda^\beta(s_n)|E_0|\rightarrow 0,\; as \;n\rightarrow+\infty,$$
which contradicts with \eqref{521} and the energy condition $E_0<0$. Hence, we conclude the proof of \eqref{51}.

\subsubsection*{\bf Step 3:} Proof of \eqref{52} and \eqref{53}.

Since for all $s\geq s_0$, we have $b(s)>0$, we know from \eqref{515} that
$$\int_{s_1}^{s_2}|b|=\int_{s_1}^{s_2}b.$$
Then \eqref{52} follows from \eqref{515} immediately.

Now, using \eqref{51} and \eqref{52}, we have
$$\log\bigg(\frac{\lambda(s_2)}{\lambda(s_1)}\bigg)\leq \delta(\alpha_0)-\int_{s_1}^{s_2}b(s)\,ds\leq \delta(\alpha_0)\leq e^{\frac{1}{2}},$$
which implies \eqref{53}. 

Now we conclude the proof of Proposition \ref{P5}.
\end{proof}

\subsection{Finite time or infinite time blow-up}
This subsection is devoted to show that the solution $u(t)$ must blow up either in finite time or in infinite time. More precisely, we claim that
\begin{equation}
\label{54}
\lambda(s)\rightarrow 0,\;as\;s\rightarrow+\infty.
\end{equation}

Indeed, from \eqref{422}, we have for all $s>s_0$, $b_s\geq-Cb^{10}$. Since $b(s)>0$ for all $s>s_0$, we have for all $s>s_0$, $b_s\geq-Cb^{10}$. After integration, we can show that for some $\tilde{s}_1>s_0$, there holds
\begin{equation}
\label{55}
b(s)\geq \frac{C}{s^{\frac{1}{9}}}.
\end{equation}
Hence from \eqref{52}, we have for all $s>\tilde{s}_1$
\begin{equation}
\label{56}
-\log\bigg(\frac{\lambda(s)}{\lambda(\tilde{s}_1)}\bigg)+\delta(\alpha_0)\geq\frac{1}{2}\int_{s_1}^{s_2}b(s)\rightarrow+\infty,\quad as\quad s\rightarrow+\infty,
\end{equation}
which implies \eqref{54} immediately. 
\subsection{Finite time blow-up and upper bound on the blow-up rate}
In this subsection, we will finish the proof of Theorem \ref{MT1}.

Combining \eqref{52} and \eqref{55}, we have for all $s>\tilde{s}_1$ ,
\begin{equation}
\label{57}
\log\bigg(\frac{\lambda(s)}{\lambda(\tilde{s}_1)}\bigg)\leq-\frac{1}{2}\int_{\tilde{s}_1}^{s}b(s)\,ds+\delta(\alpha_0)\leq C(\tilde{s}_1^{\frac{8}{9}}-s^{\frac{8}{9}})+\delta(\alpha_0).
\end{equation}
Hence, there exists some $\tilde{s}_2\gg\tilde{s}_1$, such that for all $s>\tilde{s}_2$
\begin{equation}\label{58}
\log\big(\lambda(s)\big)\leq -Cs^{\frac{8}{9}}\leq -Cb^{-8},
\end{equation}
for some universal constant $C>0$.

From \eqref{54}, we may chose a sequence of time $\{t_n\}$ such that $\lambda(t_n)=2^{-n}$ and $t_n\rightarrow T$, the maximal lifespan $T$, as $n\rightarrow+\infty$. Denote by $\tilde{t}_2$ such that $s(\tilde{t}_2)=\tilde{s}_2$ and $s_n$ such that $s(t_n)=s_n$. We also assume that for all $n\geq n_0$, there holds $t_n\geq \tilde{t}_2$. Recall from \eqref{53}, for all $n\geq n_0$ and $s\in[s_n,s_{n+1}]$, we have 
\begin{equation}
\label{510}
2^{-n-2}\leq\lambda(s)\leq 2^{-n+1}.
\end{equation}
From \eqref{52} and \eqref{58}, we have
\begin{align*}
\delta(\alpha_0)-\log\bigg(\frac{\lambda(s_{n+1})}{\lambda(s_n)}\bigg)\geq \int_{s_n}^{s_{n+1}}b(s)\,ds\geq C\int_{s_n}^{s_{n+1}}\frac{ds}{|\log(\lambda(s))|^{\frac{1}{8}}}
\end{align*}
Combining the above two estimates, we have
\begin{equation}
\label{59}
\int_{t_n}^{t_{n+1}}\frac{dt}{\lambda^\beta(t)|\log(\lambda(t))|^{\frac{1}{8}}}\leq C.
\end{equation}
it is easy to see from \eqref{510} that for all $n\geq n_0$, $t_n\leq t \leq t_{n+1}$ there holds
\begin{align}\label{511}
\frac{1}{C}2^{-\beta n}|\log(2^{-n})|^{\frac{1}{8}}\leq\lambda^\beta(t)|\log(\lambda(t))|^{\frac{1}{8}}\leq C 2^{-\beta n}|\log(2^{-n})|^{\frac{1}{8}},
\end{align}
for some universal constant $C>0$.

Injecting \eqref{511} into \eqref{59}, we have for all $n\geq n_0$,
\begin{equation}
\label{512}
t_{n+1}-t_n\leq C2^{-\beta n}|\log(2^{-n})|^{\frac{1}{8}}
\end{equation}
Summing \eqref{512} with respect to $n$ for all $n\geq n_0$, we obtain that $T<+\infty$ or equivalently, the solution blows up in finite time. On the other hand, we also obtain that for all $n\geq n_0$
\begin{equation}
\label{513}
T-t_n\leq C\sum_{k=n}^{+\infty}2^{-\beta k}k^{\frac{1}{8}}\leq C 2^{-\beta n}n^{\frac{1}{8}}\leq C\lambda^\beta(t_n)|\log(\lambda(t_n))|^{\frac{1}{8}},
\end{equation}
where we use the fact that $\lambda(t_n)=2^{-n}$ for the last inequality.

Next, for all $t$ close to $T$, there exists $n\geq n_0$ such that $t\in[t_n,t_{n+1}]$. From \eqref{510} and \eqref{513}, we have
\begin{equation}
\lambda^\beta(t)|\log(\lambda(t))|^{\frac{1}{8}}\geq C\lambda^\beta(t_n)|\log(\lambda(t_n))|^{\frac{1}{8}}\geq C(T-t_n)\geq C(T-t).
\end{equation}
Let $f(x)=x^\beta|\log x|^{\frac{1}{8}}$ for $x\in(0,x_0)$. It is easy to verify that for $x_0$ small enough, $f(x)$ is increasing in $x$. Since, for all $t$ close to $T$, there holds
\begin{align}\label{514}
f\Bigg(\bigg(\frac{T-t}{|\log(T-t)|^{\frac{1}{8}}}\bigg)^{\frac{1}{\beta}}\Bigg)&\leq \frac{T-t}{|\log(T-t)|^{\frac{1}{8}}}\bigg|\log\bigg(\frac{T-t}{|\log(T-t)|^{\frac{1}{8}}}\bigg)\bigg|^{\frac{1}{8}}\nonumber\\
&\leq C(T-t)\leq C\lambda^\beta(t)|\log(\lambda(t))|^{\frac{1}{8}}=Cf(\lambda(t)).
\end{align}
For $t$ close to $T$, we have $|\log(\lambda(t))|\gg1$, which implies $Cf\big(\lambda(t)\big)\leq f\big((2C)^{\frac{1}{\beta}}\lambda(t)\big)$.
Injecting this inequality into \eqref{514}, we have 
\begin{equation}
\lambda(t)\geq C\bigg(\frac{T-t}{|\log(T-t)|^{\frac{1}{8}}}\bigg)^{\frac{1}{\beta}},
\end{equation}
for all $t$ close to $T$, which  concludes the proof of Theorem \ref{MT1}.

\appendix

\section{Proof of the Hardy's type estimate}\label{A4}
This section is devoted to prove the Hardy's type estimate \eqref{327}. We first choose a cut-off function $\chi$ such that $\chi(y)=1$, if $|y|<1$; $\chi(y)=0$, if $|y|>2$. Then we have
\begin{equation}
\label{D1}
\int\frac{|f(y)|^2}{(1+|y|)^\beta}\,dy\lesssim\int\frac{|f(y)\chi(y)|^2}{(1+|y|)^\beta}\,dy+\int\frac{|f(y)(1-\chi(y))|^2}{(1+|y|)^\beta}\,dy:=\mathrm{I}+\mathrm{II}.
\end{equation}
For $I$ it is easy to obtain:
\begin{equation}
\label{D2}
\mathrm{I}\lesssim\int_{|y|\leq 2}|f|^2\lesssim\int|f|^2e^{-|y|}.
\end{equation}
While for $\mathrm{II}$, we first introduce the fractional Hardy's inequality in dimension $1$ introduced in \cite{BD}: for all $\alpha\in(0,2)$, we have
\begin{equation}
\label{D3}
\forall u\in C_c(D),\quad\frac{1}{2}\int_{D}\int_{D}\frac{|u(x)-u(y)|^2}{|x-y|^{d+\alpha}}dxdy\geq\kappa_{d,\alpha}\int_{D}\frac{|u(x)|^2}{x_d^\alpha}dx,
\end{equation}
where $D=\{(x_1,\ldots,x_d)|x_d>0\}$ is the upper half space and $\kappa_{d,\alpha}>0$ is given by%
\footnote{Here we use the following estimate: $\mathrm{B}(x,y)\sim \frac{1}{y}$, as $y\rightarrow 0^+.$}
\begin{equation}
\label{D16}
\kappa_{d,\alpha}=\frac{\pi^{\frac{d-1}{2}}\Gamma(\frac{1+\alpha}{2})[\mathrm{B}(\frac{1+\alpha}{2},\frac{2-\alpha}{2})-2^\alpha]}{\alpha 2^\alpha\Gamma(\frac{d+\alpha}{2})}\sim \frac{1}{2-\alpha},\quad as \alpha\rightarrow 2^-.
\end{equation}

On the other hand, from \cite[Proposition 4.1]{NPV}, we have the following characterization of the fractional Sobolev norm:
\begin{equation}
\label{D4}
\int\big||D|^{\frac{\beta}{2}}f\big|^2=C_\beta\int\int\frac{|f(x)-f(y)|^2}{|x-y|^{1+\beta}}\,dxdy,
\end{equation}
with $C_\beta\sim(2-\beta)$ as $\beta\rightarrow 2^-$.

Applying \eqref{D3} for $\alpha=\beta$, $d=1$, using \eqref{D16} and \eqref{D4}, we obtain for all $f\in H^{\frac{\beta}{2}}\cap C_c(\mathbb{R}\backslash \{0\})$.
\begin{align}
\label{D6}
\mathrm{II}&\leq \frac{1}{\kappa_{1,\beta}} \iint\frac{\big|f(x)(1-\chi(x))-f(y)(1-\chi(y))\big|^2}{|x-y|^{1+\beta}}\,dxdy\nonumber\\
&\leq \frac{2}{\kappa_{1,\beta}} \bigg(\iint\frac{|f(x)-f(y)|^2}{|x-y|^{1+\beta}}\,dxdy+  \iint\frac{|f(y)|^2|\chi(x)-\chi(y)|^2}{|x-y|^{1+\beta}}\,dxdy\bigg)\nonumber\\
&\leq C\int\big||D|^{\frac{\beta}{2}}f\big|^2+ \frac{1}{\kappa_{1,\beta}} \int|f(y)|^2\,dy\int\frac{|\chi(x)-\chi(y)|^2}{|x-y|^{1+\beta}}\,dx,
\end{align}
where $C$ is a universal constant independent of $\beta$.

We then denote by
\begin{align}
N_<=\frac{1}{\kappa_{1,\beta}}\int|f(y)|^2\,dy\int_{|t|\leq 1}\frac{|\chi(y)-\chi(y-t)|^2}{|t|^{1+\beta}}\,dt,\\
N_>=\frac{1}{\kappa_{1,\beta}}\int|f(y)|^2\,dy\int_{|t|\geq 1}\frac{|\chi(y)-\chi(y-t)|^2}{|t|^{1+\beta}}\,dt.
\end{align}
For $N_<$, using the Leibniz's rule we have
\begin{equation}
\label{D7}
N_<\leq\frac{1}{\kappa_{1,\beta}}\int|f(y)|^2\,dy\int_{|t|\leq 1}|t|^{-1-\beta}\bigg|t\int_0^1\chi'(y-t+st)\,ds\bigg|^2\,dt.
\end{equation}
Since, $\chi'(y)=0$, if $|y|>2$, we have for all $|t|\leq1$, $s\in[0,1]$ and $|y|\geq3$, $\chi'(y-t+st)=0,$
which implies that for all $|t|\leq1$, $s\in[0,1]$ and $y\in\mathbb{R}$, $|\chi'(y-t+st)|\lesssim e^{-|y|/2}$.
Combining with \eqref{D16} and\eqref{D7}, we obtain%
\footnote{Here, we use the fact that $\beta<2$.}
\begin{align}
\label{D9}
N_<\lesssim \bigg(\int|f(y)|^2e^{-|y|}\bigg)\bigg(\int_{|t|\leq 1}(2-\beta)|t|^{1-\beta}\,dt\bigg)\leq C\int|f(y)|^2e^{-|y|},
\end{align}
for some universal constant $C$ independent of $\beta$.

For $N_>$, we have
\begin{align}
\label{D10}
N_>\lesssim&\int|f(y)|^2\,dy\Bigg(\int_{|t|\geq 1}\frac{|\chi(y)|^2}{|t|^{1+\beta}}\,dt+\int_{|t|\geq 1}\frac{|\chi(y-t)|^2}{|t|^{1+\beta}}\,dt\Bigg)\nonumber\\
\lesssim&\int|f(y)\chi(y)|^2+\int_{|y|\leq A}|f(y)|^2\,dy\int_{|t|\geq 1}\frac{|\chi(y-t)|^2}{|t|^{1+\beta}}\,dt\nonumber\\
&+\int_{|y|\geq A}|f(y)|^2\,dy\int_{|t|\geq 1}\frac{|\chi(y-t)|^2}{|t|^{1+\beta}}\,dt\nonumber\\
:=&\widehat{\mathrm{I}}+\widehat{\mathrm{II}}+\widehat{\mathrm{III}},
\end{align}
with some large constant $A>10$ to be chosen later.

It is easy to obtain that
\begin{equation}
\widehat{\mathrm{I}}\lesssim \int|f(y)|^2e^{-|y|},\quad \widehat{\mathrm{II}}\lesssim \int_{|y|\leq A}|f(y)|^2\leq e^A\int|f(y)|^2e^{-|y|}.\label{D11}.
\end{equation}
While for $\widehat{\mathrm{III}}$, we have
\begin{align}
\label{D13}
\widehat{\mathrm{III}}\lesssim\int_{|y|\geq A}|f(y)|^2\,dy\int_{\{t||t|\geq 1,\,|y-t|\leq 2\}}|t|^{-1-\beta}\,dt.
\end{align}
Since $|y|\geq A>10$, we have for all $t$ with $|t|\geq 1$ and $|y-t|\leq 2$, there holds $|t|\geq \frac{1}{2}|y|$ and 
$$\int_{\{t||t|\geq 1,\,|y-t|\leq 2\}}|t|^{-1-\beta}\,dt\sim \frac{1}{|y|^{1+\beta}}$$
Thus, we have
\begin{equation}
\label{D14}
\widehat{\mathrm{III}}\lesssim \int_{|y|\geq A}\frac{|f(y)|^2}{|y|^{1+\beta}}\,dy\leq\frac{10}{A}\int\frac{|f(y)|^2}{(1+|y|)^\beta}\,dy.
\end{equation}
Combing \eqref{D11} and \eqref{D14}, we have
\begin{equation}
\label{D15}
N_>\leq C_A\int|f(y)|^2e^{-|y|}\,dy+\frac{10}{A}\int\frac{|f(y)|^2}{(1+|y|)^\beta}\,dy,
\end{equation}
with some constant $C_A>0$ depending only on $A$.

Finally, by collecting all the estimates above, we have
\begin{equation*}
\int\frac{|f(y)|^2}{(1+|y|)^\beta}\,dy\leq C_A\Bigg(\int\big||D|^{\frac{\beta}{2}}f\big|^2+\int|f(y)|^2e^{-|y|}\,dy\Bigg)+\frac{10}{A}\int\frac{|f(y)|^2}{(1+|y|)^\beta}\,dy.
\end{equation*}
Choosing $A$ large enough, we obtain \eqref{327} immediately.

\section{Proof of the spectral property}\label{A2}

This section is devoted to show that the quadratic form $H^\beta(\varepsilon,\varepsilon)$ is positive except on a dimension four subspace provided that $\beta$ is close to $2$. 

\subsection{Notations}
We start with some basic notations. For all $\varepsilon=\varepsilon_1+i\varepsilon_2\in H^{\frac{\beta}{2}}$, we denote by
\begin{align*}
&\overline{H}_1^\beta(\varepsilon_1,\varepsilon_1)=\int\big||D|^{\frac{\beta}{2}}\varepsilon_1\big|^2+\frac{10}{9}\bigg(2(2\beta+1)\int y\nabla Q_\beta Q_\beta^{2\beta-1}\varepsilon_1^2-\frac{1}{10}\int\frac{\varepsilon^2_1}{\cosh^2(\frac{10}{9}y)}\bigg),\\
&\overline{H}_2^\beta(\varepsilon_2,\varepsilon_2)=\int\big||D|^{\frac{\beta}{2}}\varepsilon_2\big|^2+\frac{10}{9}\bigg(2\int y\nabla Q_\beta Q_\beta^{2\beta-1}\varepsilon_2^2-\frac{1}{10}\int\frac{\varepsilon^2_2}{\cosh^2(\frac{10}{9}y)}\bigg),
\end{align*}
and 
\begin{align*}
&\overline{\mathcal{L}}^\beta_1=|D|^\beta+\frac{20(2\beta+1)}{9}y\nabla Q_\beta Q_\beta^{2\beta-1}-\frac{1}{9\cosh^2(\frac{10}{9}y)}\\
&\overline{\mathcal{L}}^\beta_2=|D|^\beta+\frac{20}{9}y\nabla Q_\beta Q_\beta^{2\beta-1}-\frac{1}{9\cosh^2(\frac{10}{9}y)}.
\end{align*}
Hence, we have
$$\overline{H}_1^\beta(\varepsilon_1,\varepsilon_1)=(\overline{\mathcal{L}}^\beta_1\varepsilon_1,\varepsilon_1),\quad \overline{H}_2^\beta(\varepsilon_2,\varepsilon_2)=(\overline{\mathcal{L}}^\beta_2\varepsilon_2,\varepsilon_2)$$
and
\begin{align}
\label{459}
H^\beta(\varepsilon,\varepsilon)=&\frac{1}{10}\bigg(\int\big||D|^{\frac{\beta}{2}}\varepsilon\big|^2+\int\frac{|\varepsilon|^2}{\cosh^2(\frac{10}{9}y)}\bigg)\nonumber\\
&\quad+\frac{9}{10}\big(\overline{H}_1^\beta(\varepsilon_1,\varepsilon_1)+\overline{H}_2^\beta(\varepsilon_2,\varepsilon_2)\big)
\end{align}
for all $\varepsilon=\varepsilon_1+i\varepsilon_2\in H^{\frac{\beta}{2}}$.

For simplicity, we denote by 
$$H(\varepsilon,\varepsilon)=(\mathcal{L}_1\varepsilon_1,\varepsilon_1)+(\mathcal{L}_2\varepsilon_2,\varepsilon_2),\quad\forall\varepsilon=\varepsilon_1+i\varepsilon_2\in H^1$$
with $\mathcal{L}_1=-\Delta+10yQ'Q^3$, $\mathcal{L}_2=-\Delta+2yQ'Q^{2\beta-1}$. 

We also denote by
\begin{align*}
&\overline{H}_1(\varepsilon_1,\varepsilon_1)=\int|\nabla\varepsilon_1|^2+\frac{10}{9}\bigg(10\int y\nabla Q Q^{3}\varepsilon_1^2-\frac{1}{10}\int\frac{\varepsilon^2_1}{\cosh^2(\frac{10}{9}y)}\bigg)\\
&\overline{H}_2(\varepsilon_2,\varepsilon_2)=\int|\nabla\varepsilon_2|^2+\frac{10}{9}\bigg(2\int y\nabla Q Q^{3}\varepsilon_2^2-\frac{1}{10}\int\frac{\varepsilon^2_2}{\cosh^2(\frac{10}{9}y)}\bigg),
\end{align*}
and
\begin{align*}
&\overline{\mathcal{L}}_1=-\Delta+\frac{100}{9}y\nabla Q Q^{3}-\frac{1}{9\cosh^2(\frac{10}{9}y)}\\
&\overline{\mathcal{L}}_2=-\Delta+\frac{20}{9}y\nabla Q Q^{3}-\frac{1}{9\cosh^2(\frac{10}{9}y)}
\end{align*}
for all $\varepsilon=\varepsilon_1+i\varepsilon_2\in H^1(\mathbb{R})$. 

Finally, we introduce the index of a bilinear form on a vector space $V$:
\begin{align*}
{\rm ind}_V(B):=\max\{k\in\mathbb{N}|&\text{there exists a subspace $P$ of codimension $k$} \\
& \text{such that $B_{|P}$ is positive.}\}.
\end{align*}
Let $H^1_e$ (respectively $H^1_o$) be the subspace of all even (respectively odd) $H^1$ functions. Assume that $H^1_e$ is $B$-orthogonal to $H^1_o$. We say that $B$ defined on $H^1$ has index $k+j$, if ${\rm ind}_{H^1_e}(B)=k$ and ${\rm ind}_{H^1_o}(B)=j$.

\subsection{Proof of the spectral property for $\beta$ close to $2$}
First, we claim that the following almost coercivity holds true for $\overline{H}_1^\beta$ and $\overline{H}_2^\beta$:
\begin{lemma}\label{L6}
For all $\kappa>0$, there exists $\beta_\kappa<2$ such that if $\beta_\kappa<\beta<2$, then for all $\varepsilon=\varepsilon_1+i\varepsilon_2\in H^{\frac{\beta}{2}}$ with $(\varepsilon_1,G_1)=(\varepsilon_1,Q_\beta)=0$ and $(\varepsilon_2,\Lambda Q_\beta+\frac{1}{2}\Lambda^2Q_\beta)=0$, there holds
\begin{equation}
\label{458}
\overline{H}_1^\beta(\varepsilon_1,\varepsilon_1)+\overline{H}_2^\beta(\varepsilon_2,\varepsilon_2)\geq -\kappa\bigg(\int\big||D|^{\frac{\beta}{2}}\varepsilon\big|^2+\int|\varepsilon|^2e^{-|y|}\bigg).
\end{equation}
\end{lemma}

We may easily see that \eqref{459} and \eqref{458} imply the spectral property \eqref{SP} immediately, when $\beta$ is close enough to $2$. On the other hand, by a standard density argument, we only need to show that \eqref{458} holds true for $\varepsilon=\varepsilon_1+i\varepsilon_2\in H^1(\mathbb{R})$. 

\begin{proof}[Proof of Lemma \ref{L6}]
First, we claim that $\overline{H}_1^\beta$ has index $1+1$ and $\overline{H}_2^\beta$ has index $1+0$, if $\beta$ is close enough to $2$.

From \cite[Lemma 10]{MR1}, we know that $\overline{H}_1$ has index $1+1$, while $\overline{H}_2$ has index $1+0$. Since we have $Q_\beta\rightarrow Q$ in $H^1$ as $\beta\rightarrow 2^-$. For $\beta$ close enough to $2$, we know that the number of the eigenvalues for $\overline{\mathcal{L}}_1^\beta$ ($\overline{\mathcal{L}}_2^\beta$ respectively) is the same as $\overline{\mathcal{L}}_1$ ($\overline{\mathcal{L}}_2$ respectively). Thus $\overline{H}_1^\beta$ must have index $1+1$ while $\overline{H}_2^\beta$ has index $1+0$, provided that $\beta$ is close enough to $2$.

Next, we state some numerical results obtained in \cite[Lemma 11]{MR1}.

\begin{lemma}[Numerical estimates]
For the operator $\overline{\mathcal{L}}_1$ and $\overline{\mathcal{L}}_2$, we have
\begin{enumerate}
\item There exists a unique even function $\phi_1\in L^\infty\cap \dot{H}^1$ such that $\overline{\mathcal{L}}_1\phi_1=Q$ and
\begin{equation}
\label{454}
-(\phi_1,Q)\bigg(1-\overline{H}_1(Q,Q)\frac{(\phi_1,Q)}{(Q,Q)^2}\bigg)>0.
\end{equation}
\item There exists a unique odd function $\phi_2\in L^\infty\cap \dot{H}^1$ such that $\overline{\mathcal{L}}_1\phi_2=yQ$ and
\begin{equation}
\label{455}
-(\phi_2,yQ)\bigg(1-\overline{H}_1(Q_y,Q_y)\frac{(\phi_2,yQ)}{(Q_y,yQ)^2}\bigg)>0.
\end{equation}
\item Let $\widetilde{Q}=\Lambda Q+\frac{1}{2}\Lambda^2Q$, then there exists a unique odd function $\phi_3\in L^\infty\cap \dot{H}^1$ such that $\overline{\mathcal{L}}_2\phi_3=\widetilde{Q}$ and
\begin{equation}
\label{456}
-(\phi_3,\widetilde{Q})\bigg(1-\overline{H}_2(Q,Q)\frac{(\phi_3,\widetilde{Q})}{(Q,\widetilde{Q})^2}\bigg)>0.
\end{equation}
\end{enumerate}
\end{lemma}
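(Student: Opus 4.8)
The plan is to treat \eqref{454}--\eqref{456} as what they are: three scalar sign conditions attached to explicit one-dimensional Schr\"odinger operators. Since $\beta=2$ here, the potentials of $\overline{\mathcal{L}}_1=-\Delta+\tfrac{100}{9}yQ'Q^3-\tfrac{1}{9}\cosh^{-2}(\tfrac{10}{9}y)$ and $\overline{\mathcal{L}}_2=-\Delta+\tfrac{20}{9}yQ'Q^3-\tfrac{1}{9}\cosh^{-2}(\tfrac{10}{9}y)$ are fully explicit through $Q(y)=(3/\cosh^2(2y))^{1/4}$, smooth, even, and exponentially decaying. I would split the proof into (i) existence and uniqueness of each $\phi_i$ in $L^\infty\cap\dot{H}^1$ with the correct parity, and (ii) the rigorous numerical evaluation of the three products.

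For (i), each $\overline{\mathcal{L}}_i$ is $-\Delta+V_i$ with $V_i$ even and exponentially decaying, so its essential spectrum is $[0,\infty)$ and it preserves parity. The three source terms $Q$, $yQ$, and $\widetilde Q=\Lambda Q+\tfrac12\Lambda^2Q$ are themselves explicit and exponentially decaying, and each $\phi_i$ is forced to inherit the parity of its right-hand side. On a fixed parity sector the zero-energy homogeneous equation $\overline{\mathcal{L}}_i\psi=0$ is a second-order linear ODE whose solutions are asymptotically affine, $\psi\sim a_\pm+b_\pm y$ as $y\to\pm\infty$; hence it admits at most a one-dimensional space of bounded solutions on each sector. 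I would verify, by integrating the homogeneous ODE outward from $y=0$ with the data at the origin fixed by the prescribed parity, that this distinguished solution in fact grows linearly, so that $0$ is neither an eigenvalue nor a resonance on the relevant sector. Invertibility then follows from variation of parameters, which produces a unique bounded particular solution; and since $V_i\phi_i-f_i$ decays exponentially, $\phi_i'$ tends to a constant that boundedness forces to vanish, giving $\phi_i'\to0$ exponentially and hence $\phi_i\in L^\infty\cap\dot{H}^1$. This last step also shows that the improper integrals defining $(\phi_1,Q)$, $(\phi_2,yQ)$ and $(\phi_3,\widetilde Q)$ converge.

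For (ii), once $\phi_i$ is characterized as the solution of a concrete ODE with decaying right-hand side, all the quantities entering \eqref{454}--\eqref{456} --- namely $(\phi_1,Q)$, $(\phi_2,yQ)$, $(\phi_3,\widetilde Q)$, together with $(Q,Q)$, $(Q_y,yQ)$, $(Q,\widetilde Q)$ and the forms $\overline{H}_1(Q,Q)$, $\overline{H}_1(Q_y,Q_y)$, $\overline{H}_2(Q,Q)$ --- become definite integrals of explicit functions, the quadratic forms being evaluated directly from their definitions (the $|D|^{\beta/2}=|\nabla|$ term is handled in physical space since $\beta=2$). I would compute each $\phi_i$ by integrating its ODE with a certified scheme and bound every integral by rigorous quadrature, combining interval arithmetic with a priori tail estimates coming from the exponential decay of the integrands, exactly in the spirit of \cite{FMR,MR1}. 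Because \eqref{454}--\eqref{456} assert strict positivity of three quantities that are numerically bounded away from zero, a computation of moderate accuracy with controlled error suffices to certify the three signs; note that soft arguments cannot replace the numerics, since $\overline{\mathcal{L}}_1$ and $\overline{\mathcal{L}}_2$ each carry a negative direction on the sector in question, so the signs of $(\phi_i,f_i)=(\overline{\mathcal{L}}_i^{-1}f_i,f_i)$ are genuinely undetermined a priori.

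The main obstacle is making step (ii) genuinely rigorous rather than merely numerical, and this is delicate precisely because $0$ sits at the bottom of the essential spectrum: the functions $\phi_i$ relax only to constants at infinity, so the ODE integration and the truncation of the defining integrals must be accompanied by explicit control of the affine tail and of the slowly varying constant limits. A secondary difficulty is that the scale $\tfrac{10}{9}y$ in the $\cosh^{-2}$ term does not match the scale $2y$ of $Q$, which rules out any closed form for $\phi_i$ and makes the certified ODE solve unavoidable; this mismatch is, however, exactly what the construction of $\overline{\mathcal{L}}_i$ exploits to force the strict, rather than marginal, positivity that the numerics then confirm.
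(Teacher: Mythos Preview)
Your proposal is sound and in fact goes further than the paper does: the paper offers no proof of this lemma but simply imports it verbatim from \cite[Lemma 11]{MR1}, adding only the remarks that \eqref{454}--\eqref{456} are verified by numerical methods and that the $\phi_j$, not being produced by Lax--Milgram, need not lie in $L^2$. Your two-step plan --- ODE existence and uniqueness of a bounded solution on the correct parity sector, followed by certified evaluation of the scalar products --- is exactly the content of the cited reference, so there is no discrepancy of approach, only of level of detail.

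One small point worth flagging: the lemma as stated calls $\phi_3$ \emph{odd}, but $\widetilde Q=\Lambda Q+\tfrac12\Lambda^2Q$ is even (since $\Lambda$ preserves parity and $Q$ is even), and your observation that each $\phi_i$ inherits the parity of its source quietly corrects this. The paper's own use of the lemma confirms that $\phi_3$ is meant to be even, since $(P_3)_A$ is constructed inside $H^1_e$.
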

\begin{remark}
The estimates \eqref{454}--\eqref{456} are verified by numerical methods.
\end{remark}
\begin{remark}
As mentioned in \cite{MR1}, these three functions $\phi_j$ are not given by the Lax-Milgram theory, hence may not be in $L^2$. 
\end{remark}

Now,we turn back to the proof of \eqref{452}. Let $\chi$ be a smooth cut-off function such that $\chi(y)=1$ if $|y|<1$, $\chi(y)=0$ if $|y|>2$. We denote by 
$$(\phi_1)_A(y)=\phi_1(y)\chi(y/A)$$
for some $A>1$ to be chosen later. One may easily show that 
$$\|\nabla(\phi_1)_A\|_{L^2}\leq C$$
for some constant $C$ independent of $A$. We also have 
$$\Delta\phi_1=\frac{100}{9}y\nabla Q Q^{3}\phi_1-\frac{1}{9\cosh^2(\frac{10}{9}y)}\phi_1-Q\in L^2.$$
which implies that $\|\Delta(\phi_1)_A\|_{L^2}\leq C$ for some constant $C$ independent of $A$. Hence we have for all $s\in[1,2]$, there holds
\begin{align}
&\big\||D|^s(\phi_1)_A\big\|_{L^2}\leq C,\;\text{for some constant $C$ independent of $A$,}\label{469}\\
&\big\||D|^s\big((\phi_1)_A-\phi_1\big)\big\|_{L^2}\rightarrow 0,\;as\;A\rightarrow+\infty, \;\text{uniformly in $s$}.\label{470}
\end{align}

We first claim that for all $\kappa>0$, there exists $\beta_\kappa<2$ such that if $\beta_\kappa<\beta<2$, then for all real valued function $\varepsilon_1\in H^1_e$ with $(\varepsilon_1,Q_\beta)=0$, we have
\begin{align}
\overline{H}^\beta_1(\varepsilon_1,\varepsilon_1)\geq -\kappa\bigg(\int\big||D|^{\frac{\beta}{2}}\varepsilon_1\big|^2+\int|\varepsilon_1|^2e^{-|y|}\bigg).\label{457}
\end{align}

The proof of \eqref{457} can be divided into the following steps:
\begin{enumerate}
\item We consider the subspace $(P_1)_A\subset H^1_e$ spanned by $Q_\beta$ and $(\phi_1)_A$. We show that for $A$ large enough, there exists a $\beta(A)<2$ such that if $\beta\in(\beta(A),2)$, then $\overline{H}^\beta_1$ restricted to $(P_1)_A$ is not degenerate.
\item Let $(P_1)_A^\perp$ be the orthogonal of $(P_1)_A$ in $H^1_e$ with respect to the quadratic form $\overline{H}^\beta_1$. Then by an index argument, we can show that $\overline{H}^\beta_1$ is nonnegative on $(P_1)_A^\perp$, under the same assumption of the previous step.
\item For all $\varepsilon_1\in (P_1)_A$ with $(\varepsilon_1,Q_\beta)=0$, we show that $\overline{H}_1^\beta(\varepsilon_1,\varepsilon_1)>0$ provided that $A\geq A_0$ large enough and $\beta>\beta(A)$.
\item For all $(\varepsilon_1,Q_\beta)=0$, from (2) we have 
$$\overline{H}^\beta_1(\varepsilon_1,\varepsilon_1)=\overline{H}^\beta_1(\varepsilon^{(1)}_A,\varepsilon^{(1)}_A)+\overline{H}^\beta_1(\varepsilon^{(2)}_A,\varepsilon^{(2)}_A)\geq \overline{H}^\beta_1(\varepsilon^{(1)}_A,\varepsilon^{(1)}_A),$$
where $\varepsilon_1=\varepsilon^{(1)}_A+\varepsilon_A^{(2)}$ with $\varepsilon^{(1)}_A\in(P_1)_A$ and $\varepsilon_A^{(2)}\in(P_1)_A^\perp$. 
\item From (3), we have roughly
$$\overline{H}^\beta_1(\varepsilon^{(1)}_A,\varepsilon^{(1)}_A)\geq -C(\varepsilon_A^{(1)},Q_\beta)^2.$$
Using the fact that $Q_\beta$ is continuous in $H^1$ with respect to $\beta$, we can show that for $A$ large enough and $\beta>\beta(A)$, there holds
$$|(\varepsilon_A^{(1)},Q_\beta)|\leq \delta_A \bigg(\int\big||D|^{\frac{\beta}{2}}\varepsilon_1\big|^2+\int\big|\varepsilon_1\big|^2e^{-|y|}\bigg)^{\frac{1}{2}},$$
with some constant $\delta_A\rightarrow0$ as $A\rightarrow+\infty$. This concludes the proof of \eqref{457}.
\end{enumerate}

Now, let $(P_1)_A={\rm span}\{Q_\beta,(\phi_1)_A\}$. We need to show that if $A$ is large enough and $2>\beta>\beta(A)$ for some constant $\beta(A)<2$, then we have
\begin{equation}
\label{460}
\det
\begin{bmatrix}
\overline{H}_1^\beta(Q_\beta,Q_\beta)&\overline{H}_1^\beta(Q_\beta,(\phi_1)_A)\\
\overline{H}_1^\beta(Q_\beta,(\phi_1)_A)&\overline{H}_1^\beta((\phi_1)_A,(\phi_1)_A)
\end{bmatrix}
\not=0.
\end{equation}
Indeed, from the fact that $Q_\beta$ is continuous in $H^1$ with respect to $\beta$, we have
\begin{equation}
\overline{H}_1^\beta(Q_\beta,Q_\beta)=\overline{H}_1(Q,Q)+\delta(|2-\beta|).\label{461}
\end{equation}
Since $(\phi_1)_A\in H^1$, we have $\overline{\mathcal{L}}^\beta_1(\phi_1)_A\rightarrow\overline{\mathcal{L}}_1(\phi_1)_A$ in $L^2$, as $\beta\rightarrow 2^-$. But this convergence may not be uniform with respect to $A$, since we do not know whether $\phi_1\in \dot{H}^{\frac{\beta}{2}}$ for $\beta<2$. However, we still have for all fixed $A$ and $\kappa_0>0$, there exits a $\beta(A,\kappa_0)<2$ such that if $2>\beta>\beta(A,\kappa_0)$, then
\begin{align}
\label{462}
&\overline{H}_1^\beta(Q_\beta,(\phi_1)_A)=\overline{H}_1(Q,(\phi_1)_A)+O(\kappa_0),\\
&\overline{H}_1^\beta((\phi_1)_A,(\phi_1)_A)=\overline{H}_1((\phi_1)_A,(\phi_1)_A)+O(\kappa_0).\label{464}
\end{align}
On the other hand, we have
\begin{align}
\label{463}
&\det
\begin{bmatrix}
\overline{H}_1(Q,Q)&\overline{H}_1(Q,(\phi_1)_A)\\
\overline{H}_1(Q,(\phi_1)_A)&\overline{H}_1((\phi_1)_A,(\phi_1)_A)
\end{bmatrix}
\nonumber\\
&=-(Q,Q)^2\bigg(1-\overline{H}_1(Q,Q)\frac{(\phi_1,Q)}{(Q,Q)^2}\bigg)+O\bigg(\frac{1}{A}\bigg)\not=0,
\end{align}
provided that $A\geq A_0$ is large enough. Combining \eqref{461}--\eqref{463}, we obtain \eqref{460}, which concludes the proof of (1).

The proof of (2)--(4) is almost the same as \cite[Lemma 13]{MR1}, we omit the details here.

Now let  $\varepsilon^{(1)}_A=\alpha_AQ_\beta+\varepsilon^{(3)}_A,$
such that $(\varepsilon^{(3)}_A,Q_\beta)=0$ and $$\alpha_A=-\frac{(\varepsilon^{(1)}_A,Q_{\beta})}{(Q_\beta,Q_\beta)}.$$
Using (4), Cauchy-Schwarz inequality and the Hardy's type estimate \eqref{327}, we have for all $\kappa>0$, there exists a $C_\kappa>0$ such that
\begin{align}
\label{465}
\overline{H}^\beta_1(\varepsilon^{(1)}_A,\varepsilon^{(1)}_A)&=\overline{H}^\beta_1(\alpha_AQ_\beta+\varepsilon^{(3)}_A,\alpha_AQ_\beta+\varepsilon^{(3)}_A)\nonumber\\
&\geq -C_\kappa\alpha_A^2-\frac{\kappa}{2}\bigg(\int\big||D|^{\frac{\beta}{2}}\varepsilon_1\big|^2+\int|\varepsilon_1|^2e^{-|y|}\bigg).
\end{align}

Our goal is to show that for all $\kappa>0$, there exist $A(\kappa)\gg1$ and $\beta_\kappa<2$  such that if $A\geq A(\kappa)$ and $\beta\in(\beta_\kappa,2)$, then we have
\begin{equation}
\label{467}
|\alpha_A|\leq \sqrt{\frac{\kappa}{2C_\kappa}}\bigg(\int\big||D|^{\frac{\beta}{2}}\varepsilon_1\big|^2+\int|\varepsilon_1|^2e^{-|y|}\bigg)^{\frac{1}{2}},
\end{equation}
which together with (4) and \eqref{465}, implies \eqref{457} immediately.

Indeed, since $(\varepsilon_1,Q_\beta)=0$, we have $\alpha_A=\frac{(\varepsilon^{(2)}_A,Q_{\beta})}{(Q_\beta,Q_\beta)}$. From the definition of $(P_1)^\perp_A$, we have:
\begin{align}
\label{466}
&|(\varepsilon^{(2)}_A,Q_{\beta})|=\big|\big(\varepsilon^{(2)}_A,Q_{\beta}-\overline{\mathcal{L}}^\beta_1(\phi_1)_A\big)\big|\nonumber\\
&\leq |(\varepsilon^{(2)}_A,Q_\beta-Q)|+\big|\big(\varepsilon^{(2)}_A,(\overline{\mathcal{L}}^\beta_1-\overline{\mathcal{L}}_1)(\phi_1)_A\big)\big|+\big|\big(\varepsilon^{(2)}_A,\overline{\mathcal{L}}_1((\phi_1)_A-\phi_1)\big)\big|,
\end{align}
where we use the fact that 
$$0=\overline{H}_1^\beta(\varepsilon^{(2)}_A,(\phi_1)_A)=(\varepsilon^{(2)}_A,\overline{\mathcal{L}}^\beta_1(\phi_1)_A)$$
and $\overline{\mathcal{L}}_1\phi_1=Q$ for \eqref{466}. From Lemma \ref{L7}, we have for $A\geq A_0$ large enough and $2>\beta>\beta(A)$, there exists a small constant $\delta_A$, with $\delta_A\rightarrow 0$ as $A\rightarrow+\infty$, such that
\begin{align}
\label{473}
|\alpha_A|&\leq \delta_A\bigg(\int\big||D|^{\frac{\beta}{2}}\varepsilon_A^{(2)}\big|^2+\int\big|\varepsilon_A^{(2)}\big|^2e^{-|y|}\bigg)^{\frac{1}{2}}\nonumber\\
&\leq C\delta_A\bigg(\int\big||D|^{\frac{\beta}{2}}\varepsilon_1\big|^2+\int\big|\varepsilon_1\big|^2e^{-|y|}\bigg)^{\frac{1}{2}},
\end{align}
which concludes the proof of \eqref{467}, hence the proof of \eqref{457}.

Similar as \eqref{457}, we have for all $\kappa>0$, there exists $\beta_\kappa<2$ such that if $\beta_\kappa<\beta<2$, then for all real valued function $\varepsilon_1\in H^1_o$ and $\varepsilon_2\in H^1_e$, with $(\varepsilon_1,G_1)=0$ and $(\varepsilon_2,\Lambda Q_\beta+\frac{1}{2}\Lambda^2 Q_\beta)=0$, there holds
\begin{align}
&\overline{H}^\beta_1(\varepsilon_1,\varepsilon_1)\geq -\kappa\bigg(\int\big||D|^{\frac{\beta}{2}}\varepsilon_1\big|^2+\int|\varepsilon_1|^2e^{-|y|}\bigg),\label{479}\\
&\overline{H}^\beta_2(\varepsilon_2,\varepsilon_2)\geq -\kappa\bigg(\int\big||D|^{\frac{\beta}{2}}\varepsilon_2\big|^2+\int|\varepsilon_2|^2e^{-|y|}\bigg),\label{480}
\end{align}
which concludes the proof of Proposition \ref{P6}.

\end{proof}

\bibliographystyle{amsplain}
\bibliography{ref}

\providecommand{\bysame}{\leavevmode\hbox to3em{\hrulefill}\thinspace}
\providecommand{\MR}{\relax\ifhmode\unskip\space\fi MR }
\providecommand{\MRhref}[2]{%
  \href{http://www.ams.org/mathscinet-getitem?mr=#1}{#2}
}
\providecommand{\href}[2]{#2}
\begin{thebibliography}{10}

\bibitem{AT}
C.~J. Amick and J.~F. Toland, \emph{Uniqueness and related analytic properties
  for the {B}enjamin-{O}no equation--a nonlinear {N}eumann problem in the
  plane}, Acta Math. \textbf{167} (1991), no.~1, 107--126.

\bibitem{BD}
K.~Bogdan and B.~Dyda, \emph{The best constant in a fractional {H}ardy
  inequality}, Math. Nachr. \textbf{5} (2011), no.~284, 629--638.

\bibitem{BHL}
T.~Boulenger, D.~Himmelsbach, and E.~Lenzmann, \emph{Blowup for fractional
  {N}{L}{S}}, J. Funct. Anal. \textbf{271} (2016), no.~9, 2569--2603.

\bibitem{BW}
J.~Bourgain and W.~Wang, \emph{Construction of blowup solutions for the
  nonlinear {S}chr{\"o}dinger equation with critical nonlinearity}, Ann. Sc.
  Norm. Super. Pisa Cl. Sci. \textbf{25} (1997), no.~1-2, 197--215.

\bibitem{CMMT}
D.~Cai, A.~J. Majda, D.~W. McLaughlin, and E.~G. Tabak, \emph{Dispersive wave
  turbulence in one dimension}, Phys. D \textbf{152} (2001), 551--572.

\bibitem{TW}
T.~Cazenave and F.~Weissler, \emph{Some remarks on the nonlinear
  {S}chr\"odinger equation in the critical case.}, vol. 1394, pp.~18--29,
  Springer-Verlag, 1989.

\bibitem{CHHO}
Y.~Cho, H.~Hajaiej, G.~Hwang, and T.~Ozawa, \emph{On the {C}auchy problem of
  fractional {S}chr{\"o}dinger equation with {H}artree type nonlinearity},
  Funkcial. Ekvac. \textbf{56} (2013), no.~2, 193--224.

\bibitem{CHKL}
Y.~Cho, G.~Hwang, S.~Kwon, and S.~Lee, \emph{Profile decompositions and blowup
  phenomena of mass critical fractional {S}chr{\"o}dinger equations}, Nonlinear
  Anal. \textbf{86} (2013), 12--29.

\bibitem{CP}
A.~Choffrut and O.~Pocovnicu, \emph{Ill-posedness of the cubic nonlinear
  half-wave equation and other fractional {N}{L}{S} on the real line}, Int.
  Math. Res. Not. IMRN \textbf{2018} (2016), no.~3, 699--738.

\bibitem{NPV}
E.~Di~Nezza, G.~Palatucci, and E.~Valdinoci, \emph{Hitchhikers guide to the
  fractional {S}obolev spaces}, Bull. Sci. Math. \textbf{136} (2012), no.~5,
  521--573.

\bibitem{DOD4}
B.~Dodson, \emph{Global well-posedness and scattering for the mass critical
  nonlinear {S}chr{\"o}dinger equation with mass below the mass of the ground
  state}, Adv. Math. \textbf{285} (2015), 1589--1618.

\bibitem{ES}
W.~Eckhaus and P.~Schuur, \emph{The emergence of solitons of the {K}orteweg-de
  {V}ries equation from arbitrary initial conditions}, Math. Methods Appl. Sci.
  \textbf{5} (1983), no.~1, 97--116.

\bibitem{FL}
R.~Frank and E.~Lenzmann, \emph{Uniqueness of non-linear ground states for
  fractional {L}aplacians in $\mathbb{R}$}, Acta Math. \textbf{210} (2013),
  no.~2, 261--318.

\bibitem{FLS}
R.~Frank, E.~Lenzmann, and L.~Silvestre, \emph{Uniqueness of radial solutions
  for the fractional {L}aplacian}, Comm. Pure and Appl. Math. \textbf{69}
  (2016), no.~9, 1671--1726.

\bibitem{GH}
B.~Guo and Z.~Huo, \emph{Global well-posedness for the fractional nonlinear
  {S}chr\"odinger equation}, Comm. Partial Differential Equations \textbf{36}
  (2011), no.~2, 247--255.

\bibitem{HS}
Y.~Hong and Y.~Sire, \emph{On fractional {S}chr\"odinger equations in {S}obolev
  spaces}, arXiv preprint arXiv:1501.01414 (2015).

\bibitem{IP}
A.~D. Ionescu and F.~Pusateri, \emph{Nonlinear fractional {S}chr{\"o}dinger
  equations in one dimension}, J. Funct. Anal. \textbf{266} (2014), no.~1,
  139--176.

\bibitem{KMR1}
C.E. Kenig, Y.~Martel, and L.~Robbiano, \emph{Local well-posedness and blow-up
  in the energy space for a class of ${L}^2$ critical dispersion generalized
  {B}enjamin–{O}no equations}, Ann. Inst. H. Poincar\'e Anal. Non Lin\'eaire
  \textbf{28} (2011), 853--887.

\bibitem{KLS}
K.~Kirkpatrick, E.~Lenzmann, and G.~Staffilani, \emph{On the continuum limit
  for discrete {N}{L}{S} with long-range lattice interactions}, Comm. Math.
  Phys. \textbf{317} (2013), no.~3, 563--591.

\bibitem{KSM}
C.~Klein, C.~Sparber, and P.~Markowich, \emph{Numerical study of fractional
  nonlinear {S}chr{\"o}dinger equations}, Proc. R. Soc. A \textbf{470} (2014),
  no.~2172, 20140364.

\bibitem{KLR}
J.~Krieger, E.~Lenzmann, and P.~Rapha{\"e}l, \emph{Nondispersive solutions to
  the ${L}^2$-critical half-wave equation}, Arch. Ration. Mech. Anal.
  \textbf{209} (2013), no.~1, 61--129.

\bibitem{KS}
J.~Krieger and W.~Schlag, \emph{Non-generic blow-up solutions for the critical
  focusing {N}{L}{S} in 1-{D}}, J. Eur. Math. Soc. (JEMS) \textbf{11} (2009),
  no.~1, 1--125.

\bibitem{LPSS}
M.~J. Landman, G.~C. Papanicolaou, C.~Sulem, and P.~L. Sulem, \emph{Rate of
  blowup for solutions of the nonlinear {S}chr{\"o}dinger equation at critical
  dimension}, Phys. Rev. A \textbf{38} (1988), no.~8, 3837.

\bibitem{MMT2}
A.~J. Majda, D.~W. McLaughlin, and E.~G. Tabak, \emph{A one-dimensional model
  for dispersive wave turbulence}, J. Nonlinear Sci. \textbf{7} (1997), no.~1,
  9--44.

\bibitem{MM1}
Y.~Martel and F.~Merle, \emph{A {L}iouville theorem for the critical
  generalized {K}orteweg-de {V}ries equation}, J. Math. Pures Appl. \textbf{79}
  (2000), no.~4, 339--425.

\bibitem{MM2}
\bysame, \emph{Instability of solitons for the critical generalized
  {K}orteweg-de {V}ries equation}, Geom. Funct. Anal. \textbf{11} (2001),
  no.~1, 74--123.

\bibitem{MM4}
\bysame, \emph{Blow up in finite time and dynamics of blow up solutions for the
  ${L}^2$--critical generalized {K}d{V} equation}, J. Amer. Math. Soc.
  \textbf{15} (2002), no.~3, 617--664.

\bibitem{MM5}
\bysame, \emph{Nonexistence of blow-up solution with minimal ${L}^2$-mass for
  the critical g{K}d{V} equation}, Duke Math. J. \textbf{115} (2002), no.~2,
  385--408.

\bibitem{MM3}
\bysame, \emph{Stability of blow-up profile and lower bounds for blow-up rate
  for the critical generalized {K}d{V} equation}, Ann. of Math. \textbf{155}
  (2002), no.~1, 235--280.

\bibitem{MMR1}
Y.~Martel, F.~Merle, and P.~Rapha{\"e}l, \emph{Blow up for the critical
  generalized {K}orteweg-de {V}ries equation {I}: {D}ynamics near the soliton},
  Acta Math. \textbf{212} (2014), no.~1, 59--140.

\bibitem{MMR2}
\bysame, \emph{Blow up for the critical g{K}d{V} equation {I}{I}: minimal mass
  blow up}, J. Eur. Math. Soc. (JEMS) \textbf{17} (2015), no.~8, 1855--1925.

\bibitem{MMR3}
\bysame, \emph{Blow up for the critical g{K}d{V} equation {I}{I}{I}: exotic
  regimes}, Ann. Sc. Norm. Super. Pisa Cl. Sci. \textbf{XIX} (2015), no.~2,
  575--631.

\bibitem{MP}
Y.~Martel and D.~Pilod, \emph{Construction of a minimal mass blow up solution
  of the modified {B}enjamin-{O}no equation}, Math. Ann. \textbf{369} (2017),
  no.~1-2, 153--245.

\bibitem{MarR}
Y.~Martel and P.~Rapha\"{e}l, \emph{Strongly interacting blow up bubbles for
  the mass critical {N}{L}{S}}, Ann. Sci. \'Ecole Norm. Sup. \textbf{51}
  (2018), no.~3, 701--737.

\bibitem{M7}
F.~Merle, \emph{Determination of blow-up solutions with minimal mass for
  nonlinear {S}chr{\"o}dinger equations with critical power}, Duke Math. J.
  \textbf{69} (1993), no.~2, 427--454.

\bibitem{M1}
\bysame, \emph{Existence of blow-up solutions in the energy space for the
  critical generalized {K}d{V} equation}, J. Amer. Math. Soc. \textbf{14}
  (2001), no.~3, 555--578.

\bibitem{MR3}
F.~Merle and P.~Rapha\"{e}l, \emph{Sharp upper bound on the blow-up rate for
  the critical nonlinear {S}chr{\"o}dinger equation}, Geom. Funct. Anal.
  \textbf{13} (2003), no.~3, 591--642.

\bibitem{MR2}
\bysame, \emph{On universality of blow-up profile for ${L}^2$ critical
  nonlinear {S}chr{\"o}dinger equation}, Invent. Math. \textbf{156} (2004),
  no.~3, 565--672.

\bibitem{MR1}
\bysame, \emph{The blow-up dynamic and upper bound on the blow-up rate for
  critical nonlinear {S}chr\"{o}dinger equation}, Ann. of Math. \textbf{161}
  (2005), no.~1, 157--220.

\bibitem{MR5}
\bysame, \emph{Profiles and quantization of the blow up mass for critical
  nonlinear {S}chr{\"o}dinger equation}, Comm. Math. Phys. \textbf{253} (2005),
  no.~3, 675--704.

\bibitem{MRS3}
F.~Merle, P.~Rapha{\"e}l, and J.~Szeftel, \emph{The instability of
  {B}ourgain-{W}ang solutions for the ${L}^2$ critical {N}{L}{S}}, Amer. J.
  Math. \textbf{135} (2013), no.~4, 967--1017.

\bibitem{NP}
I.~Naumkin and P.~Rapha{\"e}l, \emph{On small traveling waves to the mass
  critical fractional {N}{L}{S}}, Calc. Var. Partial Differential Equations
  \textbf{57} (2018), no.~3, 93.

\bibitem{Per}
G.~Perelman, \emph{On the formation of singularities in solutions of the
  critical nonlinear {S}chr{\"o}dinger equation}, Ann. Henri. Poincar{\'e},
  vol.~2, 2001, pp.~605--673.

\bibitem{MR4}
P.~Rapha{\"e}l, \emph{Stability of the $\log$-$\log$ bound for blow up
  solutions to the critical nonlinear {S}chr{\"o}dinger equation}, Math. Ann.
  \textbf{331} (2005), no.~3, 577--609.

\bibitem{SS}
C.~Sulem and P.~L. Sulem, \emph{The nonlinear {S}chr{\"o}dinger equation:
  self-focusing and wave collapse}, vol. 139, Springer Science \& Business
  Media, 2007.

\bibitem{W1}
M.~I. Weinstein, \emph{Nonlinear {S}chr{\"o}dinger equations and sharp
  interpolation estimates}, Comm. Math. Phys. \textbf{87} (1983), no.~4,
  567--576.

\bibitem{W4}
\bysame, \emph{Existence and dynamic stability of solitary wave solutions of
  equations arising in long wave propagation}, Comm. Partial Differential
  Equations \textbf{12} (1987), no.~10, 1133--1173.

\end{thebibliography}
\end{document}